\theoremstyle{plain}
\newtheorem{St}{Theorem}[section]
\newtheorem{Le}[St]{Lemma}
\newtheorem{Gev}[St]{Corollary}
\theoremstyle{definition}
\newtheorem{Ex}[St]{Example}
\newtheorem{Def}[St]{Definition}
\newtheorem{Opm}[St]{Remark}
\DeclareMathOperator\pg{\mathrm{PG}}
\DeclareMathOperator\ag{\mathrm{AG}}
\DeclareMathOperator\im{\mathrm{Im}}
\newcommand{\cL}{\mathcal{L}}
\newcommand{\gauss}[2]{\genfrac{[}{]}{0pt}{}{#1}{#2}}
\begin{document}
	\title{Cameron-Liebler $k$-sets in subspaces and non-existence conditions}

 \footnotetext{$*$ Department of Mathematics: Analysis, Logic and Discrete Mathematics, Ghent University, Krijgslaan 281, Building S8, 9000 Gent, Belgium (Email: leo.storme@ugent.be) \newline (http://cage.ugent.be/$\sim$ls).}%
    \footnotetext{$\dagger$ Department of Mathematics and Data Science, University of Brussels (VUB),  Pleinlaan 2, Building G, 1050 Brussels, Belgium  (Email: Jan.De.Beule@vub.ac.be, Jonathan.Mannaert@vub.be)\newline(http://homepages.vub.ac.be/$\sim$jdbeule/,http://homepages.vub.be/$\sim$jonmanna/).}%

\author{J. De Beule $^\dagger$, J. Mannaert$^\dagger$ and L. Storme$^*$}

\maketitle
\begin{abstract}
In this article we generalize the concepts that were used in the PhD thesis of Drudge to classify Cameron-Liebler line classes in PG$(n,q), n\geq 3$, to Cameron-Liebler sets of $k$-spaces in $\pg(n,q)$ and $\ag(n,q)$. In his PhD thesis, Drudge proved that every Cameron-Liebler line class in $\pg(n,q)$ intersects every $3$-dimensional subspace in a Cameron-Liebler line class in that subspace.  We are using the generalization of this result for sets of $k$-spaces in $\pg(n,q)$ and $\ag(n,q)$. Together with a basic counting argument this gives a very strong non-existence condition, $n\geq 3k+3$. This condition can also be improved for $k$-sets in $\ag(n,q)$, with $n\geq 2k+2$.
\end{abstract}

\section{Introduction}

The objective of Cameron and Liebler in \cite{Cameron-Liebler} was to study irreducible collineation groups of $\pg(d,q)$ having equally many point orbits as line orbits. 
Such a group induces a symmetrical tactical decomposition on $\pg(d,q)$. Any line class in such a tactical decomposition is called a Cameron-Liebler line class, and
this is equivalent with the characteristic vector of the line class belonging to the row space of the point-line incidence matrix of $\pg(d,q)$. For 
$d=3$, a Cameron-Liebler line class is characterized by the property that it meets any spread of $\pg(3,q)$ in a fixed number $x$ of lines, where a spread of $\pg(3,q)$
is a partition of the point set in lines. So the parameter $x$ depends only on the size of such a class. Now for $d=3$, it is easy to see that the following line sets are examples:
(1) the empty set, of parameter $x=0$, (2) all lines through a fixed point $p$, of parameter $x=1$, (3) all lines in a fixed plane $\pi$, also of parameter $x=1$, and (4) a  union of (2) and (3) with $p\not\in\pi$, of parameter $x=2$. These examples (and their complements in the set of lines) 
are called {\em trivial examples}, and it was conjectured in \cite{Cameron-Liebler} that no other examples exist. This conjecture
has been disproven by Drudge in \cite{Drudge}, who gave an example in $\pg(3,3)$ of parameter $x=5$; an example that was generalized to an infinite family of parameter
$x=\frac{q^2+1}{2}$ in $\pg(3,q)$, $q$ odd, in \cite{BruenAndDrude}. New non-trivial examples have been discovered by Rodgers in \cite{RodgersPhD,Rodgers2013}, some of them have been generalized to infinite families, see \cite{DeBeule2016,Feng2015,Feng20xx}. Generally spoken, non-trivial examples are rare.  Furthermore,
non-existence results of Cameron-Liebler line classes for particular values of the parameter $x$  have been found, see e.g. \cite{MetschAndGavrilyuk}. In particular, this article contains a result that excludes roughly half of the possible parameters.

 In \cite{DrudgeThesis}, Drudge started to characterize Cameron-Liebler line classes in $\pg(n,q)$ by intersection properties with $3$-dimensional subspaces. 
Drudge showed that the only Cameron-Liebler line classes in $\pg(n,3)$, $n\geq 4$, are trivial, which means indeed of the same structure as the trivial
examples known in $\pg(3,q)$.  We were informed by A. Gavrilyuk that these results have been extended for $\pg(n, 4)$, $n \geq 4$, by Gavrilyuk and
Mogilnykh in \cite{GavMog}, for $\pg(n, 5)$, $n \geq 4$, by Matkin in \cite{Mat18}.


In recent years, Cameron-Liebler line classes were generalized to Cameron-Liebler $k$-sets in $\pg(n,q)$ and $\ag(n,q)$, see \cite{Jozefien, Me1, Me2}. These have been completely classified for $q\leq5$ by Filmus and
Ihringer in \cite{BDF}. Our goal will be to combine these two ideas. We want to use a similar approach of the PhD thesis of Drudge and find out what we can generalize to Cameron-Liebler sets of $k$-spaces in $\pg(n,q)$ or in $\ag(n,q)$, or \emph{Cameron-Liebler $k$-sets} for short. This will prove to be very effective since we are able to prove the following non-existence conditions.
\begin{St}
Suppose that $n\geq 3k+3$ and $k\geq 1$. Let $\mathcal{L}$ be a Cameron-Liebler $k$-set of parameter $x$ in $\pg(n,q)$  such that $\mathcal{L}$ is not a point-pencil, nor the empty set. Then it holds that
$$x\geq\frac{q^{n-k}-1}{q^{2k+2}-1}+1.$$
\end{St}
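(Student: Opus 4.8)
The plan is to fix a $k$-space $\kappa\in\mathcal{L}$ (possible since $\mathcal{L}\neq\emptyset$) and to read off the parameter $x$ from the way $\mathcal{L}$ decomposes over the $(3k+2)$-dimensional subspaces through $\kappa$. Passing to the quotient geometry $\pg(n-k-1,q)=\pg(V/\kappa)$, the $(3k+2)$-spaces containing $\kappa$ correspond to $(2k+1)$-spaces of this quotient, and a $(2k+1)$-spread $\mathcal{S}$ of $\pg(n-k-1,q)$, which has exactly $\frac{q^{n-k}-1}{q^{2k+2}-1}$ members and is where $n\geq 3k+3$ first enters, lifts to a family $\{\Sigma_s\}_{s\in\mathcal{S}}$ of $(3k+2)$-spaces with $\Sigma_s\cap\Sigma_{s'}=\kappa$ for $s\neq s'$. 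By the generalization of Drudge's restriction theorem, each $\mathcal{L}\cap\Sigma_s$ is a Cameron-Liebler $k$-set of $\Sigma_s\cong\pg(3k+2,q)$ of some parameter $x_s$, and $x_s\geq 1$ because $\kappa\in\mathcal{L}\cap\Sigma_s$. The first technical step is to establish the additivity identity
\[
\sum_{s\in\mathcal{S}}(x_s-1)=x-1,
\]
which I would prove from the expression of the characteristic vector of a Cameron-Liebler $k$-set as a combination of characteristic vectors of point-pencils, using that every point off $\kappa$ projects into exactly one spread element and hence lies in exactly one $\Sigma_s$, while every point of $\kappa$ lies in all of them.

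Next I would invoke the classification of Cameron-Liebler $k$-sets of parameter $1$. The point here is that $3k+2>2k+1$ for $k\geq 1$, so the exceptional ``dual'' example that exists in $\pg(2k+1,q)$ has parameter $q^{k+1}+1$ rather than $1$ inside $\pg(3k+2,q)$; consequently the only parameter-$1$ Cameron-Liebler $k$-sets of $\Sigma_s$ are the point-pencils. Since $\mathcal{L}\cap\Sigma_s$ contains $\kappa$, this yields the dichotomy: either $x_s\geq 2$, or $\mathcal{L}\cap\Sigma_s$ is exactly the pencil of all $k$-spaces of $\Sigma_s$ through a single point $p_s\in\kappa$. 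Combined with the identity above, $x-1=\sum_s(x_s-1)$ is at least the number of indices $s$ for which $\mathcal{L}\cap\Sigma_s$ fails to be a point-pencil.

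It therefore suffices to argue that, when $\mathcal{L}$ is not a point-pencil, none of the $\Sigma_s$ meets $\mathcal{L}$ in a pencil; equivalently, assuming $x-1<\frac{q^{n-k}-1}{q^{2k+2}-1}$ I would derive, for every choice of $\kappa$ and spread, the existence of some $\Sigma_{s_0}$ with $\mathcal{L}\cap\Sigma_{s_0}$ a pencil at a point $p_{s_0}\in\kappa$, and then propagate this local pencil to a global one. For the propagation I would vary the spread and $\kappa$ so as to produce pencils in overlapping $(3k+2)$-subspaces and show their centres must coincide on the (sufficiently large) intersections, forcing a common centre $p$ and ultimately $\mathcal{L}=$ the point-pencil at $p$, contradicting the hypothesis.

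The main obstacle is exactly this last propagation step. The additivity identity is bookkeeping and the parameter-$1$ classification is citable, but forcing the local pencil centres to agree and to sweep out a global pencil requires that overlapping local subspaces share enough dimensions to determine a centre uniquely; this is precisely where the hypothesis $n\geq 3k+3$ is doing the real work, both to guarantee a genuine $(2k+1)$-spread of the quotient and to keep the pairwise intersections of the lifted subspaces large enough to transport the centre. A secondary technical point is the divisibility needed for an exact spread: if $(2k+2)\nmid(n-k)$ I would instead work with a maximal partial $(2k+1)$-spread and check that the resulting count still yields the stated bound.
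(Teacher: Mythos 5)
Your overall strategy coincides with the paper's: fix $\kappa\in\mathcal{L}$, consider the Cameron--Liebler $k$-sets induced in $(3k+2)$-spaces through $\kappa$, show each has parameter at least $2$ unless one of them is a point-pencil (which would force $\mathcal{L}$ itself to be one), and average. Your additivity identity is correct, and the proof you sketch for it works: writing $\chi_\mathcal{L}=\sum_p c_p\chi_{[p]_k}$ gives $x_{\pi}=\sum_{p\in\pi}c_p$ for the induced parameter and $\sum_{p\in\kappa}c_p=\chi_\mathcal{L}(\kappa)=1$, from which $\sum_s(x_s-1)=x-1$ follows. (Your dichotomy ``$x_s\ge 2$ or a pencil'' also implicitly needs $x_s\in\mathbb{N}$, which holds in $\pg(3k+2,q)$ because $(k+1)\mid(3k+3)$.) However, two genuine gaps remain. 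The step you yourself flag as the main obstacle --- propagating a single local pencil $\mathcal{L}\cap\Sigma_{s_0}$ to a global one --- is left unproven, and the centre-matching argument you outline is not how one should attack it: this propagation is exactly Theorem~\ref{DrudgeGeneralPG}, which follows in a few lines from the counting identity of Lemma~\ref{DrudgeArgum} (one gets $x\le 1$ directly from $|[p]_k\cap\mathcal{L}|\le\gauss{n}{k}_q$, and Theorems~\ref{Non-existence01} and~\ref{Uniquenessx=1} then finish). Without citing that result or actually supplying the propagation, your proof is incomplete at its crux.

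The second gap is the divisibility issue you relegate to a ``secondary technical point''. Your decomposition needs a genuine $(2k+1)$-spread of the quotient $\pg(n-k-1,q)$, i.e.\ $(2k+2)\mid(n-k)$, and the partial-spread fallback does not work as stated: if the lifted subspaces cover only a point set $P$, your identity degrades to $\sum_s(x_s-1)=\sum_{p\in P\setminus\kappa}c_p$, and since the coefficients $c_p$ may be negative this partial sum is not bounded above by $x-1$, so the count yields nothing. The fix is to drop the spread and sum over \emph{all} $\gauss{n-k}{2k+2}_q$ $(3k+2)$-spaces through $\kappa$: each point off $\kappa$ lies in exactly $\gauss{n-k-1}{2k+1}_q$ of them, giving $\sum_{\pi}(x_\pi-1)=(x-1)\gauss{n-k-1}{2k+1}_q$ with no divisibility hypothesis, whence $x-1\ge\gauss{n-k}{2k+2}_q\big/\gauss{n-k-1}{2k+1}_q=\frac{q^{n-k}-1}{q^{2k+2}-1}$. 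This is precisely the paper's Lemma~\ref{Aid} (proved there by double-counting the elements of $\mathcal{L}$ disjoint from $\kappa$), and it is the reason the paper's proof needs no case distinction on $n-k$.
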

The previous theorem is also valid for $\ag(n,q)$, but can be improved in that case to the following statement.
\begin{St}
Suppose that $n\geq 2k+2$ and $k\geq 1$. Let $\mathcal{L}$ be a Cameron-Liebler $k$-set of parameter $x$ in $\ag(n,q)$ such that $\mathcal{L}$ is not a point-pencil, nor the empty set. Then it holds that
$$x\geq2\left(\frac{q^{n-k}-1}{q^{k+1}-1}\right)+1.$$
\end{St}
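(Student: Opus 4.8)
The plan is to mirror the argument behind the projective statement and then to exploit the extra structure of affine space---parallelism and the hyperplane at infinity---to sharpen it. Throughout I write $M=\frac{q^{n-k}-1}{q^{k+1}-1}$ for the quantity in the statement; geometrically this will be the number of members of a spread of a quotient space. First I would fix an element $R\in\cL$, which exists since $\cL\neq\emptyset$, and pass to the quotient of $\ag(n,q)$ by the direction $R_\infty$ of $R$ at infinity. Because $n\geq 2k+2$, this quotient has dimension $n-k\geq k+2$, so it admits a spread (or at least a large partial spread) by $(k+1)$-dimensional subspaces; pulling such a spread back through $R_\infty$ produces a family $\Pi_1,\dots,\Pi_M$ of affine $(2k+1)$-flats, each containing $R$, pairwise meeting exactly in $R$, and together covering $\ag(n,q)$. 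The count $M=\frac{q^{n-k}-1}{q^{k+1}-1}$ is exactly the size of this spread of the quotient, which is why the denominator $q^{k+1}-1$ (rather than the projective $q^{2k+2}-1$) appears.

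Second, I would invoke the generalization of Drudge's restriction result available from earlier in the paper: for each $i$ the set $\cL\cap\Pi_i$ is a Cameron-Liebler $k$-set in $\Pi_i\cong\ag(2k+1,q)$, of some parameter $x_i$, and it is nonempty since $R\in\cL\cap\Pi_i$. Choosing in each $\Pi_i$ the parallel class $\mathcal{S}_i$ of $R$---a spread of $\Pi_i$ that contains $R$---and assembling $\mathcal{S}=\{R\}\cup\bigcup_i(\mathcal{S}_i\setminus\{R\})$, one checks directly that $\mathcal{S}$ is a spread of $\ag(n,q)$: the sets $\Pi_i\setminus R$ are pairwise disjoint and cover the complement of $R$, and spread members coming from different $\Pi_i$ are disjoint because each avoids $R$. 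Evaluating the defining property of a Cameron-Liebler $k$-set (it meets every spread in exactly $x$ members) on $\mathcal{S}$ then yields the key identity
$$x=|\cL\cap\mathcal{S}|=1+\sum_{i=1}^{M}\left(x_i-1\right),$$
since $R$ is counted once and each $\Pi_i$ contributes $|\cL\cap\mathcal{S}_i|=x_i$ minus the shared element $R$.

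Everything is thereby reduced to a lower bound on the local parameters $x_i$, and this is exactly where the affine improvement must enter. I would establish a base-case statement in $\ag(2k+1,q)$ describing nonempty Cameron-Liebler $k$-sets of small parameter: the only essentially small examples are point-pencils, and affine space has strictly fewer such trivial configurations than $\pg(2k+1,q)$ (the ``dual'' hyperplane-pencils that exist projectively do not survive), which is precisely the mechanism that upgrades the projective contribution to the affine factor $2$. Moreover, if $\cL\cap\Pi_i$ is a point-pencil, then since $R\in\cL\cap\Pi_i$ its vertex must be a point of $R$, which is what couples the local data across the different $\Pi_i$.

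The main obstacle, and the technical heart of the proof, is the \emph{quantitative gluing step}: one must show that the global hypothesis that $\cL$ is not a point-pencil forces $\sum_{i=1}^{M}(x_i-1)\geq 2M$, and hence $x\geq 2M+1$. The soft version is clear---if every induced pencil shared a common vertex $p\in R$ then $\cL$ would contain, and by the Cameron-Liebler property equal, the point-pencil at $p$, a contradiction---but turning this into a bound on the \emph{number} of indices $i$ whose restriction is a point-pencil, and combining it with the sharpened affine base case, is where the real work lies. I expect two places to demand the most care: ensuring the auxiliary spread of the quotient genuinely exists and partitions the space through $R$ (a mild divisibility/partial-spread issue, harmless for the inequality), and the quantitative gluing that converts ``$\cL$ is not a point-pencil'' into the factor $2$ improvement exploiting affine parallelism.
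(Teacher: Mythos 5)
Your overall architecture---restrict $\cL$ to $(2k+1)$-dimensional flats through a fixed $R\in\cL$, bound the induced parameters from below, and sum---is indeed the paper's architecture, and your spread identity $x=1+\sum_i(x_i-1)$ is a clean substitute for the paper's counting lemma \emph{when it applies}. But there are two genuine gaps. First, your family $\Pi_1,\dots,\Pi_M$ requires a $(k+1)$-spread of the $(n-k)$-dimensional quotient, which exists only when $(k+1)\mid(n-k)$; already for $k=1$, $n=4$ (the smallest case the theorem covers), and in fact for all but one residue of $n$ modulo $k+1$, no such spread exists, a partial spread does not cover, and then your $\mathcal{S}$ is not a spread and the key identity collapses. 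This is not the harmless technicality you suggest. The repair is to average over \emph{all} $(2k+1)$-flats $\pi$ through $R$: since any member of the parallel class of $R$ other than $R$ spans with $R$ only a $(k+1)$-flat, it lies in exactly $\gauss{n-k-1}{k}_q$ of these $\pi$, whence $\sum_\pi(x_\pi-1)=\gauss{n-k-1}{k}_q\,(x-1)$ --- and this is precisely Lemma \ref{Aid} with $t=2k+1$, which the paper derives by counting disjoint pairs instead of spreads and which needs no divisibility hypothesis.

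Second, the step you yourself flag as ``where the real work lies''--- showing $\sum_i(x_i-1)\geq 2M$ --- is left open, and the mechanism you propose (bounding the \emph{number} of indices whose restriction is a point-pencil) is not the right one and would not yield the factor $2$. What the paper actually proves is $x_{\pi}\geq 3$ for \emph{every} $\pi$ individually: the restriction $\cL\cap[\pi]_k$ misses the hyperplane at infinity of $\pi$, hence is an \emph{affine} Cameron-Liebler $k$-set of the same parameter by Theorem \ref{ToInftyAndBeondConv}, so $x_\pi$ is a positive integer; $x_\pi=1$ would force a local point-pencil (Theorem \ref{Unique1AG}) and hence a global one (Theorem \ref{PPAG}), excluded by hypothesis; and $x_\pi=2$ is impossible for affine Cameron-Liebler $k$-sets by Corollary \ref{No2}. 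That last non-existence result, which your sketch never identifies, is the true source of the factor $2$; the ``affine space has fewer trivial examples'' phenomenon you invoke only disposes of the case $x_\pi=1$.
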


These results are significant improvements to what is known so far in the literature. We should also note that we are only interested in classifying Cameron-Liebler $k$-sets in $\pg(n,q)$ (or $\ag(n,q)$) of parameter $x\leq \frac12\frac{q^{n+1}-1}{q^{k+1}-1}$ (or $x\leq \frac12q^{n-k}$), which is the maximal possible parameter of a Cameron-Liebler $k$-set divided by two. Classifying these parameters would yield a classification of all parameters, since it is known that the complement of a Cameron-Liebler $k$-set of parameter $x$ in $\pg(n,q)$ (or $\ag(n,q)$) is a Cameron-Liebler $k$-set of parameter $\frac{q^{n+1}-1}{q^{k+1}-1}-x$ (or $q^{n-k}-x$ respectively), see \cite[Lemma 3.1]{Jozefien}.
Section 2 will give some useful results that we will use along the way. Section 3 gives the generalizations of the results of the PhD thesis of Drudge. We are also able to improve some of these results in a slight way. Section 4 is based on a counting argument that will be used in Section 6 to obtain the non-existence results, while Section 5 gives some  consequences of the counting argument. We end this article with a small conclusion in Section 7.

\section{Preliminaries}
In this section, we give some results and definitions on Cameron-Liebler $k$-sets in $\ag(n,q)$ and $\pg(n,q)$. These objects are sets of $k$-spaces in $\pg(n,q)$ or $\ag(n,q)$ respectively, with $n\geq 2k+1\geq 3$. We will treat the results for $\pg(n,q)$ and $\ag(n,q)$ separately, but first we give the following notation. 
\begin{Def}
Suppose that $p$ is a point and $\tau$ an $i$-dimensional subspace in $\pg(n,q)$ or $\ag(n,q)$. Then we denote by $[p]_k$  the set of $k$-spaces through $p$, $[\tau]_k$  the set of $k$-spaces in $\tau$ and $[p,\tau]_k:=[p]_k\cap [\tau]_k$.
\end{Def}

\begin{Def}
A {\em $k$-spread} of $\pg(n,q)$, respectively $\ag(n,q)$, is a set of $k$-dimensional subspaces of $\pg(n,q)$, respectively $\ag(n,q)$, partitioning the point set
of $\pg(n,q)$, respectively $\ag(n,q)$.
\end{Def}

\begin{Opm}
Recall the well-known fact that $\pg(n,q)$ has $k$-spreads if and only if $k+1 \mid n+1$, see e.g. \cite[Corollary 3.7]{Hirschfeld}.
\end{Opm}

We end with a useful notation for the number of $(a-1)$-spaces in $\pg(b-1,q)$, with $a,b\in \mathbb{N}$. This number is denoted by
$$\gauss{b}{a}_q=\frac{(q^b-1)\cdots (q^{b-a+1}-1)}{(q^a-1)\cdots (q-1)}.$$
This number is known as the \emph{Gaussian binomial coefficient}.

\subsection{The projective space $\pg(n,q)$}

Let $P_n$ be the $0,1$ valued matrix, rows indexed by the points and columns by the $k$-dimensional subspaces ($k$-spaces) and such that $P_n$ at position $(p,\pi)$
equals $1$ if and only if $p \in \pi$ and $0$ otherwise. This matrix is called the point-$k$-space incidence matrix of $\pg(n,q)$, and is used to define Cameron-Liebler $k$-sets. 


\begin{Def}
A \emph{Cameron-Liebler set of $k$-spaces in $\pg(n,q)$} is a set $\mathcal{L}$ of $k$-spaces in $\pg(n,q)$ such that its characteristic vector $\chi_\mathcal{L}$
is a vector in the row space of the matrix $P_n$ (equivalently, in the column space of $P_n^T$, denoted $\im(P_n^T)$). A Cameron-Liebler set of $k$-spaces in $\pg(n,q)$ has 
{\em parameter $x$} if and only if 
\[
|\mathcal{L}|=x \gauss{n}{k}_q\,.
\]
\end{Def}

\begin{Ex}\label{ex:trivial_projective}
The following examples are {\em trivial examples}: (1) the empty set, of parameter $x=0$, (2) all $k$-spaces through a fixed point $p$, 
of parameter $x=1$, (3) all $k$-spaces in a fixed hyperplane $\pi$, of parameter $x=\frac{q^{n-k}-1}{q^{k+1}-1}$, (4)
a  union of (2) and (3) with $p\not\in\pi$, and (5) all complements of the examples above. Note that $\frac{q^{n-k}-1}{q^{k+1}-1}$ is an integer if and only if $(k+1)\mid (n+1)$. 
\end{Ex}

Besides these trivial examples, no example is known for $k\geq 2$. 
Yet Cameron-Liebler $k$-sets have many equivalent definitions to work with. The following theorem summarizes them.

\begin{St}\cite[Theorem 2.2]{Jozefien}\label{EquivalenceProj}
	Let \(\mathcal{L}\) be a non-empty set of $k$-spaces in $\pg(n,q)$, $n \geq 2k+1$, with characteristic vector $\chi_\mathcal{L}$, and $x$ so that 
	$|\mathcal{L}|= x\gauss{n}{k}_q$. Then the following properties are equivalent.
	\begin{enumerate}
		\item $\chi_\mathcal{L} \in \im(P_n^T)=(\ker(P_n))^\perp$, with $P_n$ the point-($k$-space) incidence matrix of $\pg(n,q)$.
		\item For every $k$-space $K$, the number of elements of $\mathcal{L}$ disjoint from $K$ is equal to $(x-\chi_\mathcal{L}(K))\gauss{n-k-1}{k}_q q^{k^2+k}$.
		\item For an $i \in \{1,...,k+1\}$ and a given $k$-space $K$, the number of elements of $\mathcal{L}$, meeting $K$ in a $(k-i)$-space is given by:
		$$\left\{\begin{array}{ll}
			  \left( (x-1) \frac{q^{k+1}-1}{q^{k-i+1}-1}+ q^i \frac{q^{n-k}-1}{q^i-1}\right) q^{i(i-1)} \gauss{n-k-1}{i-1}_q \gauss{k}{i}_q  & \text{ if } K \in \mathcal{L} \\
			xq^{i(i-1)} \gauss{n-k-1}{i-1}_q \gauss{k+1}{i}_q & \text{ if } K \not\in \mathcal{L}.
		\end{array} \right.$$
		\item If $k+1 \mid n+1$, i.e. if and only if $\pg(n,q)$ has $k$-spreads, then $|\mathcal{L}\cap \mathcal{S}|=x$ for any $k$-spread $\mathcal{S}$. 
		(from which it follows that $x \in \mathbb{N}$).
	\end{enumerate}
\end{St}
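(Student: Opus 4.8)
The plan is to work inside the Grassmann association scheme whose vertices are the $k$-spaces of $\pg(n,q)$, with relations $A_0=I,A_1,\dots,A_{k+1}$, where $A_j$ records the pairs of $k$-spaces meeting in exactly a $(k-j)$-space, so that $A_{k+1}$ is the disjointness matrix. The hypothesis $n\geq 2k+1$ guarantees that disjoint $k$-spaces exist and that the scheme has exactly $k+2$ common eigenspaces $V_0=\langle\mathbf 1\rangle,V_1,\dots,V_{k+1}$. The structural fact driving everything is that $\im(P_n^T)=V_0\oplus V_1$: one has $\im(P_n^T)=\im(P_n^TP_n)$ by equality of ranks, and $P_n^TP_n=\sum_{i=0}^{k}\gauss{k-i+1}{1}_q A_i$ lies in the Bose--Mesner algebra of the scheme, so its image is a sum of eigenspaces $V_j$; since $P_nP_n^T$ is positive definite, $P_n$ has full row rank $\frac{q^{n+1}-1}{q-1}=\dim V_0+\dim V_1$, and a dimension count against the strictly growing dimensions $\dim V_j$ forces this sum to be exactly $V_0\oplus V_1$. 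I would then prove the cycle $(1)\Rightarrow(3)\Rightarrow(2)\Rightarrow(1)$ and, separately, $(1)\Leftrightarrow(4)$.

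For $(1)\Rightarrow(3)$, assume $\chi_\mathcal{L}\in V_0\oplus V_1$ and write $\chi_\mathcal{L}=x\frac{q^{k+1}-1}{q^{n+1}-1}\mathbf 1+w$ with $w\in V_1$, the $V_0$-coefficient being forced by $\mathbf 1^T\chi_\mathcal{L}=|\mathcal{L}|=x\gauss{n}{k}_q$. The number of elements of $\mathcal{L}$ meeting a fixed $k$-space $K$ in a $(k-i)$-space is exactly $(A_i\chi_\mathcal{L})(K)$. Since each $A_i$ acts as its valency $\nu_i$ on $V_0$ and as a scalar $\lambda_1^{(i)}$ on $V_1$, this equals $\lambda_1^{(i)}\chi_\mathcal{L}(K)+x\frac{q^{k+1}-1}{q^{n+1}-1}(\nu_i-\lambda_1^{(i)})$, an affine function of the indicator $\chi_\mathcal{L}(K)$; it therefore takes one value when $K\in\mathcal{L}$ and another when $K\notin\mathcal{L}$. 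Substituting the known valencies and the $V_1$-eigenvalues of the Grassmann scheme and simplifying then produces the two closed forms stated in $(3)$.

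The step $(3)\Rightarrow(2)$ is immediate on taking $i=k+1$, since meeting $K$ in a $(-1)$-space means being disjoint from $K$. For $(2)\Rightarrow(1)$, I would read $(2)$ as the single identity $A_{k+1}\chi_\mathcal{L}=c(x\mathbf 1-\chi_\mathcal{L})$ with $c=\gauss{n-k-1}{k}_q q^{k^2+k}$, i.e. $(A_{k+1}+cI)\chi_\mathcal{L}=cx\mathbf 1\in V_0$. Expanding $\chi_\mathcal{L}=\sum_{j=0}^{k+1}u_j$ along the eigenspaces and comparing components gives $(\lambda_j^{(k+1)}+c)u_j=0$ for every $j\geq 1$. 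Since $-c$ equals the eigenvalue $\lambda_1^{(k+1)}$ of the disjointness matrix on $V_1$ and this value is attained on none of $V_2,\dots,V_{k+1}$, all higher components vanish and $\chi_\mathcal{L}\in V_0\oplus V_1=\im(P_n^T)$.

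Finally, $(1)\Leftrightarrow(4)$ rests on the observation that for any two $k$-spreads $\mathcal{S}_1,\mathcal{S}_2$ the difference $\chi_{\mathcal{S}_1}-\chi_{\mathcal{S}_2}$ lies in $\ker(P_n)=(\im(P_n^T))^\perp$, since each spread covers every point exactly once. Hence $(1)$ gives $\chi_\mathcal{L}\cdot(\chi_{\mathcal{S}_1}-\chi_{\mathcal{S}_2})=0$, so $|\mathcal{L}\cap\mathcal{S}|$ is constant; averaging this quantity over all spreads, using $\frac{\gauss{n}{k}_q}{\gauss{n+1}{k+1}_q}=\frac{q^{k+1}-1}{q^{n+1}-1}$ and that a spread has $\frac{q^{n+1}-1}{q^{k+1}-1}$ members, identifies the constant as $x$. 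For the converse one needs that the spread differences span all of $\ker(P_n)$. The main obstacle is precisely this twofold technical core: pinning down the Grassmann-scheme eigenvalues $\lambda_1^{(i)}$ (and thereby the exact constants $c$ and the formula in $(3)$, including verifying that $-c$ is attained only on $V_1$), and proving the spanning property of spread differences, which requires $k+1\mid n+1$ together with a connectivity argument among the spreads.
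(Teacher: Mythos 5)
This statement is imported by citation from \cite[Theorem 2.2]{Jozefien}; the paper you were given contains no proof of it, so there is nothing in-paper to compare against. Your outline follows what is the standard (and, in essence, the cited source's) route: work in the Grassmann scheme, identify $\im(P_n^T)=V_0\oplus V_1$ via $\im(P_n^T)=\im(P_n^TP_n)$, positive definiteness of $P_nP_n^T$ and a dimension count, express the intersection counts as $(A_i\chi_\mathcal{L})(K)$ being an affine function of $\chi_\mathcal{L}(K)$, and invert $(2)$ by showing the eigenvalue $-q^{k^2+k}\gauss{n-k-1}{k}_q$ of the disjointness matrix occurs only on $V_1$. That skeleton is correct, and your treatment of $(1)\Rightarrow(4)$ (spread differences lie in $\ker(P_n)$, plus an averaging argument using transitivity of $\pgl(n+1,q)$ on $k$-spaces to identify the constant as $x$) is fine.

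However, as written this is a plan rather than a proof, and the two items you defer are precisely the mathematical content. First, the claim that $\lambda_j^{(k+1)}\neq\lambda_1^{(k+1)}$ for all $j\geq 2$ is not automatic: even-indexed eigenvalues are positive, but for odd $j\geq 3$ you must actually compare $q^{\binom{j}{2}+(k+1-j)(k+1)}\gauss{n-k-j}{k+1-j}_q$ with $q^{k(k+1)}\gauss{n-k-1}{k}_q$, and this comparison is where $n\geq 2k+1$ is used; without it the step $(2)\Rightarrow(1)$ is unproved. Second, and more seriously, $(4)\Rightarrow(1)$ needs that the differences $\chi_{\mathcal{S}_1}-\chi_{\mathcal{S}_2}$ span all of $\ker(P_n)=V_2\oplus\cdots\oplus V_{k+1}$; ``a connectivity argument among the spreads'' does not establish this. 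One must either exhibit explicit switching configurations generating a spanning set or compute $\dim\,\mathrm{span}\{\chi_{\mathcal{S}}\}$, and this is the deepest part of the theorem. Two smaller points: your cycle $(1)\Rightarrow(3)\Rightarrow(2)\Rightarrow(1)$ closes only if $(3)$ is read as holding for every $i\in\{1,\dots,k+1\}$ and every $K$; and at $i=k+1$ the displayed formula for $K\in\mathcal{L}$ contains the indeterminate product $\frac{q^{k+1}-1}{q^{k-i+1}-1}\gauss{k}{i}_q$, which must be interpreted by cancelling the common vanishing factor before you may conclude that it reduces to $(x-1)q^{k^2+k}\gauss{n-k-1}{k}_q$.
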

It is known that the size of $\cL$ can often be deduced from some definitions. We give the following corollary.
\begin{Gev}\label{EquivalencenoParamP}
Suppose that $\cL$ is a set of $k$-spaces in $\pg(n,q)$, $n\geq 2k+1$, and $x$ a fixed number, then the following statements are equivalent.
\begin{enumerate}
\item $\cL$ is a Cameron-Liebler $k$-set of parameter $x$.
\item For every $k$-space $K$, the number of elements of $\mathcal{L}$ disjoint from $K$ is equal to $(x-\chi_\mathcal{L}(K))\gauss{n-k-1}{k}_q q^{k^2+k}$.
\end{enumerate}
\end{Gev}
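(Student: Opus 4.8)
The plan is to deduce the corollary from Theorem~\ref{EquivalenceProj}, whose statement already lists the disjointness condition as property~(2). The only genuinely new content is that this disjointness condition, now stated with a \emph{free} parameter $x$, already forces the size $|\cL|=x\gauss{n}{k}_q$, which Theorem~\ref{EquivalenceProj} takes as a standing hypothesis. Once that size is known, the free $x$ coincides with the parameter determined by $|\cL|$, and the corollary reduces to the theorem.

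The implication $(1)\Rightarrow(2)$ is immediate: if $\cL$ is a Cameron-Liebler $k$-set of parameter $x$, then by definition $\chi_\cL\in\im(P_n^T)$ and $|\cL|=x\gauss{n}{k}_q$, so property~(2) of Theorem~\ref{EquivalenceProj} applies verbatim and gives statement~(2). For $(2)\Rightarrow(1)$ I would first pin down $|\cL|$ by a double count (the empty case forces $x=0$ and is trivial, so assume $\cL\neq\emptyset$). Counting the ordered pairs $(L,K)$ with $L\in\cL$ and $K$ a $k$-space disjoint from $L$, summing over $K$ and invoking statement~(2) with $C:=\gauss{n-k-1}{k}_q q^{k^2+k}$ and $T:=\gauss{n+1}{k+1}_q$ the total number of $k$-spaces, yields
\[
\sum_{K}\bigl(x-\chi_\cL(K)\bigr)C \;=\; C\bigl(xT-|\cL|\bigr),
\]
while summing over $L$ first gives $|\cL|\cdot N$, where $N$ is the number of $k$-spaces disjoint from a fixed $k$-space; this $N$ is a constant because $\pgl(n+1,q)$ acts transitively on $k$-spaces. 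Equating the two counts produces the relation $|\cL|\,N = C\bigl(xT-|\cL|\bigr)$.

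The value of $N$ is the one spot needing care, but it drops out for free from the already-proved direction $(1)\Rightarrow(2)$ applied to the set of \emph{all} $k$-spaces: its characteristic vector is a scalar multiple of the sum of the rows of $P_n$, hence lies in $\im(P_n^T)$, so this set is a Cameron-Liebler $k$-set of parameter $x_0:=\frac{q^{n+1}-1}{q^{k+1}-1}=T/\gauss{n}{k}_q$, and for it statement~(2) reads $N=(x_0-1)C$. Substituting and dividing by $C$ collapses the relation to $|\cL|\,x_0 = xT = x\,x_0\gauss{n}{k}_q$, whence $|\cL|=x\gauss{n}{k}_q$.

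With the size established, the free parameter $x$ equals the parameter attached to $\cL$ by its size, so statement~(2) is now literally property~(2) of Theorem~\ref{EquivalenceProj}. That theorem then gives $\chi_\cL\in\im(P_n^T)$, and together with $|\cL|=x\gauss{n}{k}_q$ this says exactly that $\cL$ is a Cameron-Liebler $k$-set of parameter $x$, which is statement~(1). The main (and essentially only) obstacle is the size computation in the third paragraph; once the constant $N$ is identified via the full set, the remaining algebra is routine.
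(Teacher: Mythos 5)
Your proof is correct and follows essentially the same route as the paper: a double count of disjoint pairs involving $\cL$ pins down $|\cL|=x\gauss{n}{k}_q$, after which Theorem~\ref{EquivalenceProj} applies. The only real difference is that where the paper quotes Segre's formula $q^{(k+1)^2}\gauss{n-k}{k+1}_q$ for the number of $k$-spaces disjoint from a fixed one, you recover that constant by applying the already-proved direction to the set of all $k$-spaces (a Cameron-Liebler set of parameter $\frac{q^{n+1}-1}{q^{k+1}-1}$), which keeps the argument self-contained.
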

\begin{proof}
The only fact that we need to prove is that from statement (2) we have that $\cL$ is a Cameron-Liebler $k$-set of parameter $x$. To obtain that parameter, we can simply count the pairs of $k$-spaces $(K_1,K_2)$,  such that $K_1\in \cL$, $K_2\not \in \cL$, and the intersection is the empty set. Since, (see \cite[Section 170]{Segre}), the number of $k$-spaces inside $\pg(n,q)$ disjoint from a fixed $k$-space is known to be
$$q^{(k+1)^2}\gauss{n-k}{k+1}_q,$$
we find that the number of pairs equals
$$|\cL|\left( q^{(k+1)^2}\gauss{n-k}{k+1}_q-(x-1)q^{k^2+k}\gauss{n-k-1}{k}_q\right),$$
and is also equal to
$$\left(\gauss{n+1}{k+1}_q-|\cL|\right) xq^{k^2+k}\gauss{n-k-1}{k}_q.$$
Working out this equality, we obtain that $|\cL|=x\gauss{n}{k}_q$. Hence, $\cL$ has parameter $x$.
\end{proof}

The following non-existence results will be useful later on.

\begin{Le}\cite[Lemma 3.1]{Jozefien}\label{Prop}
Consider a Cameron-Liebler $k$-set of parameter $x$ in $\pg(n,q)$, then $0 \leq x \leq \frac{q^{n+1}-1}{q^{k+1}-1}$.
\end{Le}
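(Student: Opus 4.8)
The plan is to read off both bounds directly from the definition of the parameter, without invoking any of the equivalent characterisations of Theorem~\ref{EquivalenceProj}. By definition one has $|\cL|=x\gauss{n}{k}_q$, and hence $x=|\cL|\big/\gauss{n}{k}_q$. Since a Cameron-Liebler $k$-set is in particular merely a set of $k$-spaces of $\pg(n,q)$, bounding $x$ reduces to bounding its cardinality between the two obvious extremes, namely the empty set and the set of \emph{all} $k$-spaces. Thus the whole statement is a normalised cardinality estimate, and no structural property of $\cL$ beyond being a collection of $k$-spaces is needed.

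First I would record that the total number of $k$-spaces in $\pg(n,q)$ equals $\gauss{n+1}{k+1}_q$, so that $0\leq|\cL|\leq\gauss{n+1}{k+1}_q$. The lower bound $x\geq 0$ is then immediate. For the upper bound I would divide the inequality $|\cL|\leq\gauss{n+1}{k+1}_q$ by $\gauss{n}{k}_q$ and simplify the resulting ratio of Gaussian binomial coefficients. Writing each coefficient as a product of factors of the shape $\frac{q^{n+1-i}-1}{q^{k+1-i}-1}$, all of them cancel except the leading one, which yields the telescoping identity
$$\frac{\gauss{n+1}{k+1}_q}{\gauss{n}{k}_q}=\frac{q^{n+1}-1}{q^{k+1}-1},$$
and therefore $x\leq\frac{q^{n+1}-1}{q^{k+1}-1}$, as claimed.

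There is essentially no obstacle to overcome here: the only genuine content is the telescoping of the Gaussian binomial ratio together with the fact that the parameter is a rescaled cardinality. As a consistency check one could instead derive the lower bound from property~(2) of Theorem~\ref{EquivalenceProj}, since the number of elements of $\cL$ disjoint from a fixed $k$-space $K$ equals $(x-\chi_{\cL}(K))\gauss{n-k-1}{k}_q q^{k^2+k}$ and must be non-negative; taking $K\notin\cL$ already forces $x\geq 0$. The upper bound can likewise be recovered by applying this non-negativity to the complement of $\cL$, which is a Cameron-Liebler $k$-set of parameter $\frac{q^{n+1}-1}{q^{k+1}-1}-x$. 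I would nevertheless present the direct cardinality argument as the main proof, as it is the shortest and is entirely self-contained.
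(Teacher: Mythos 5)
Your argument is correct: the lemma is purely a normalised cardinality bound, since $x=|\cL|\big/\gauss{n}{k}_q$ and $0\leq|\cL|\leq\gauss{n+1}{k+1}_q$, and the telescoping identity $\gauss{n+1}{k+1}_q\big/\gauss{n}{k}_q=\frac{q^{n+1}-1}{q^{k+1}-1}$ is right. The paper itself does not prove this statement but cites it from \cite[Lemma 3.1]{Jozefien}, where it arises in exactly this spirit (the complement of $\cL$ is a Cameron-Liebler $k$-set of parameter $\frac{q^{n+1}-1}{q^{k+1}-1}-x$, and both parameters are non-negative rescaled cardinalities), so your direct count is essentially the intended argument and needs no structural input from Theorem~\ref{EquivalenceProj}.
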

The following result gives a non-existence condition for Cameron-Liebler $k$-sets in $\pg(n,q)$ of parameter $x<1$.

\begin{St}\cite[Theorem 4.3]{Jozefien}\label{Non-existence01}
There do not exist Cameron-Liebler $k$-sets in $\pg(n,q)$ of parameter $x \in ]0,1[$ and if $n\geq 3k+2$, then there are no Cameron-Liebler $k$-sets of parameter $x\in]1,2[$.
\end{St}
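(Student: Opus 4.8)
The plan is to treat the two claims separately, relying throughout on the row-space characterization in Theorem~\ref{EquivalenceProj}. The point of entry is condition~(1): since $\chi_{\cL}\in\im(P_n^T)$, there is a weight function $y$ on the points of $\pg(n,q)$ with
$$\chi_{\cL}(K)=\sum_{p\in K}y_p\in\{0,1\}\quad\text{for every }k\text{-space }K.$$
Summing this identity over all $k$-spaces and comparing with $|\cL|=x\gauss{n}{k}_q$ (using that each point lies on $\gauss{n}{k}_q$ of the $k$-spaces) yields $\sum_p y_p=x$. These two facts — that every $k$-space carries total weight $0$ or $1$, and that the global weight is exactly $x$ — together with the disjointness count of Theorem~\ref{EquivalenceProj}(2), will drive everything.

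For the first claim I would argue by contradiction. If $0<x<1$ then $|\cL|=x\gauss{n}{k}_q>0$, so I fix $K\in\cL$. By Theorem~\ref{EquivalenceProj}(2) the number of members of $\cL$ disjoint from $K$ equals $(x-1)q^{k^2+k}\gauss{n-k-1}{k}_q$. Being a cardinality, it is non-negative, which forces $x\ge 1$, contradicting $x<1$. This step uses only the standing hypothesis $n\ge 2k+1$.

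For the second claim, assume $1<x<2$ and $n\ge 3k+2$. Since $x>1$, the same disjointness count is strictly positive, so there exists $K'\in\cL$ disjoint from $K$; set $\sigma=\langle K,K'\rangle$, a $\pg(2k+1,q)$. The goal is to show $\sum_{p\in\sigma}y_p\ge 2$ and $\sum_{p\notin\sigma}y_p\ge 0$, for then $x=\sum_p y_p\ge 2$, a contradiction. For the first inequality I would take a $k$-spread $\mathcal{S}$ of $\sigma$ containing both $K$ and $K'$ and sum the weight identity over $\mathcal{S}$: since $\mathcal{S}$ partitions the points of $\sigma$ and each member contributes $\chi_{\cL}\in\{0,1\}$, we get $\sum_{p\in\sigma}y_p=|\mathcal{S}\cap\cL|\ge\chi_{\cL}(K)+\chi_{\cL}(K')=2$. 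For the second inequality I would sum the weight identity over all $k$-spaces $M$ skew to $\sigma$: the left-hand side $\sum_{M\cap\sigma=\emptyset}\chi_{\cL}(M)$ is non-negative, while the right-hand side equals $\sum_{P\notin\sigma}m_P\,y_P$, where $m_P$ is the number of $k$-spaces through $P$ that are skew to $\sigma$.

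The hard part will be this last step: showing that $m_P$ is a \emph{positive constant}. Here $n\ge 3k+2$ enters decisively, since a $k$-space and a $(2k+1)$-space are skew exactly when their span has dimension $3k+2\le n$; passing to the quotient $\pg(n,q)/\sigma\cong\pg(n-2k-2,q)$, which has dimension at least $k$, shows that through every point $P\notin\sigma$ there is at least one $k$-space skew to $\sigma$, so $m_P\ge 1$. That $m_P$ is independent of $P$ follows from homogeneity: the setwise stabilizer of $\sigma$ in $\pgl(n+1,q)$ is transitive on the points outside $\sigma$. With $m_P\equiv m>0$ we obtain $m\sum_{P\notin\sigma}y_P\ge 0$, hence $\sum_{p\notin\sigma}y_p\ge 0$, which closes the argument. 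I also expect a short field-reduction argument to be needed to exhibit a spread of $\sigma$ through the prescribed complementary pair $K,K'$.
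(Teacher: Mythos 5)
Your proposal is correct, but note that the paper does not actually prove Theorem~\ref{Non-existence01}: it is imported from \cite[Theorem 4.3]{Jozefien}, and the only in-house commentary is the remark after Theorem~\ref{non-integer} that the counting machinery of Lemma~\ref{Aid} recovers the first part (for $n\geq 2k+2$). Your first part is even more economical than that: the non-negativity of the skew count in Theorem~\ref{EquivalenceProj}(2) gives $x\geq 1$ directly, and it works already for $n\geq 2k+1$. Your second part takes a genuinely different route from anything displayed in the paper. The paper's method would fix $K\in\mathcal{L}$ and average the induced integer parameters $x_{\pi_i}$ over all $(2k+1)$-spaces $\pi_i\supseteq K$; but if $1<x<2$ only the single subspace $\sigma=\langle K,K'\rangle$ is forced to have induced parameter at least $2$, so Lemma~\ref{Aid} with $t=2k+1$ only excludes a sliver $\left]1,1+\gauss{n-k-1}{k}_q^{-1}\right[$ rather than all of $]1,2[$. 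Your weight-function argument --- writing $\chi_{\mathcal{L}}(K)=\sum_{p\in K}y_p$ from $\chi_{\mathcal{L}}\in\im(P_n^T)$, splitting $x=\sum_{p\in\sigma}y_p+\sum_{p\notin\sigma}y_p$, bounding the first sum below by $2$ via a spread of $\sigma$ through the disjoint pair $K,K'$, and the second below by $0$ via the constant positive count $m_P$ of $k$-spaces through $P\notin\sigma$ skew to $\sigma$ (which is exactly where $n\geq 3k+2$ enters) --- circumvents this and is essentially the dual, ``Boolean degree one'' perspective. The two facts you defer (that two disjoint $k$-spaces of $\pg(2k+1,q)$ extend to a spread, and that the stabilizer of $\sigma$ is transitive on the exterior points, or equivalently a direct computation of $m_P$ in the quotient $\pg(n,q)/P$) are both standard, so there is no gap; what your approach buys is a proof of the $]1,2[$ exclusion that the subspace-averaging framework of Sections 4--6 does not deliver on its own, at the price of leaving that framework entirely.
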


The following theorem classifies Cameron-Liebler $k$-sets of parameter $x=1$. 
\begin{St}\cite[Theorem 4.1]{Jozefien} \label{CharX=1PG(n,q)}\label{Uniquenessx=1}
Let $\mathcal{L}$ be a Cameron-Liebler $k$-set  with parameter $x = 1$ in $\pg(n,q)$, $n \geq 2k+1$. Then $\mathcal{L}$ consists out of all the $k$-spaces through a fixed point or $n = 2k+1$ and $\mathcal{L}$ is the set of all the $k$-spaces in a hyperplane of PG($2k + 1,q$).
\end{St}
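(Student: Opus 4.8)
The plan is to translate the parameter condition $x=1$ into a purely combinatorial statement about $\mathcal{L}$ and then invoke the Erd\H{o}s--Ko--Rado theorem for subspaces. First I would record that $x=1$ forces $|\mathcal{L}| = \gauss{n}{k}_q$. Next, for any $K \in \mathcal{L}$ we have $\chi_\mathcal{L}(K) = 1$, so Corollary~\ref{EquivalencenoParamP} (equivalently statement~(2) of Theorem~\ref{EquivalenceProj}) tells us that the number of elements of $\mathcal{L}$ disjoint from $K$ equals $(1-1)\gauss{n-k-1}{k}_q q^{k^2+k} = 0$. Hence no two members of $\mathcal{L}$ are disjoint: $\mathcal{L}$ is an intersecting family of $k$-spaces, i.e.\ any two of its members share at least a point.

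Thus $\mathcal{L}$ is an intersecting family of $k$-spaces in $\pg(n,q)$, $n \geq 2k+1$, whose size $\gauss{n}{k}_q$ is exactly the number of $k$-spaces through a fixed point. This is precisely the extremal value in the Erd\H{o}s--Ko--Rado theorem for $k$-spaces (equivalently $(k+1)$-dimensional vector subspaces) in $\pg(n,q)$: for $n \geq 2k+1$ a pairwise-intersecting family has at most $\gauss{n}{k}_q$ members. The second step is therefore to apply the classification of the families attaining this maximum. For $n > 2k+1$ the only maximal intersecting families are the point-pencils $[p]_k$; for $n = 2k+1$ there is, by the self-duality in this dimension, the additional family $[H]_k$ of all $k$-spaces contained in a fixed hyperplane $H$. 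Matching these with the structural options yields precisely the two alternatives in the statement.

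Finally I would explain why the hyperplane case is confined to $n = 2k+1$ directly from the size. The family $[H]_k$ of all $k$-spaces in a hyperplane $H \cong \pg(n-1,q)$ has cardinality $\gauss{n}{k+1}_q$, and $\gauss{n}{k+1}_q = \gauss{n}{k}_q$ holds if and only if $k+1 = n-k$, that is $n = 2k+1$; equivalently, by Example~\ref{ex:trivial_projective}(3), the parameter $\frac{q^{n-k}-1}{q^{k+1}-1}$ of $[H]_k$ equals $1$ exactly when $n = 2k+1$. Hence for $n > 2k+1$ the hyperplane family cannot even have the correct size, and only the point-pencil survives, in agreement with the uniqueness part of the Erd\H{o}s--Ko--Rado theorem. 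The main obstacle I anticipate is exactly this equality case: the numerical bound $\gauss{n}{k}_q$ is classical, but pinning down all extremal intersecting families---showing that no configuration other than a point-pencil (or, at $n = 2k+1$, a hyperplane family) meets the bound---is the delicate part, typically handled via the ratio (eigenvalue) bound on the associated Kneser-type graph together with a careful analysis of its equality case.
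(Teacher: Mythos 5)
The paper offers no proof of this statement; it is imported verbatim from \cite[Theorem 4.1]{Jozefien}, so there is no internal argument to compare against. Your proposal is correct and is essentially the argument used in that reference: Theorem~\ref{EquivalenceProj}(2) with $x=1$ shows $\mathcal{L}$ is an intersecting family of $k$-spaces of the extremal size $\gauss{n}{k}_q$, and the Erd\H{o}s--Ko--Rado theorem for vector spaces together with the classification of its equality case yields exactly the two alternatives, with the hyperplane example ruled out for $n>2k+1$ by your size (or parameter) computation. The one caveat, which you rightly flag rather than prove, is that the uniqueness part of the vector-space EKR theorem is itself a nontrivial result that must be cited from the literature.
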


\subsection{The affine space $\ag(n,q)$}
Cameron-Liebler line classes in $\ag(n,q)$ were first described and studied in \cite{Me1} for $n=3$, and generalized for $n \geq 3$ in \cite{Me2}. We thus refer to these articles for this section. Let $A_n$ be the point-($k$-space) incidence matrix in $\ag(n,q)$. Cameron-Liebler $k$-sets in $\ag(n,q)$
are now defined similarly as in the projective case.

\begin{Def}
A \emph{Cameron-Liebler set of $k$-spaces in $\ag(n,q)$} is a set $\mathcal{L}$ of $k$-spaces in $\ag(n,q)$ such that its characteristic vector $\chi_\mathcal{L}$
is a vector in the row space of the matrix $A_n$ (equivalently, in the column space of $A_n^T$, denoted $\im(A_n^T)$). A Cameron-Liebler set of $k$-spaces in $\ag(n,q)$ has 
{\em parameter $x$} if and only if 
\[
|\mathcal{L}|=x \gauss{n}{k}_q\,.
\]
\end{Def}

Note that it has been proven in \cite{Me2} that this definition is equivalent with the following definition using $k$-spreads of $\ag(n,q)$.

\begin{Def}\label{definition_2.12}
A Cameron-Liebler $k$-set in $\ag(n,q)$ is a set of $k$-spaces $\cL$, such that for every $k$-spread $\mathcal{S}$, it holds that $|\mathcal{L}\cap \mathcal{S}|=x$. 
\end{Def}

Note that contrary to the projective space, the affine space $\ag(n,q)$ has $k$-spreads for all $k < n$, illustrated by the following easy example.
\begin{Ex}
Consider $\ag(n,q)$ and its projective closure $\pg(n,q)$, with $\pi_\infty$ the hyperplane at infinity. If we choose $I$ to be a $(k-1)$-space inside $\pi_\infty$, then all the (affine) $k$-spaces through $I$ form a $k$-spread of AG$(n,q)$.
\end{Ex}
\begin{Opm}
Different from $\pg(n,q)$, it holds that the parameter $x$ of a Cameron-Liebler $k$-set in $\ag(n,q)$ is always an integer. This fact follows from the last definition.
\end{Opm}
For the case of lines, we also have the following interesting result. This result is slightly improved in a similar way as was done for Corollary \ref{EquivalencenoParamP}.
\begin{St}\cite[Theorem 3.7]{Me2}\label{EquivalenceAff}
Suppose that $\cL$ is a set of lines in $\ag(n,q)$, with characteristic vector $\chi_\cL$, then the following statements are equivalent:
\begin{enumerate}
\item The set $\cL$ is a Cameron-Liebler line class of parameter $x$.
\item For every line $\ell$, there are $(q^2\frac{q^{n-2}-1}{q-1}+1)(x-\chi_\mathcal{L}(\ell))$ lines of $\cL$ disjoint from $\ell$ and through every point at infinity there are $x$ affine lines of $\cL$.
\end{enumerate}
\end{St}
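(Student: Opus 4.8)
The plan is to mirror the treatment of Corollary \ref{EquivalencenoParamP}: the bulk of this equivalence is already contained in \cite[Theorem 3.7]{Me2} in the version where condition (2) additionally carries the size hypothesis $|\cL|=x\gauss{n}{1}_q$, and the only genuinely new point is that this hypothesis is superfluous, because it can be recovered from the disjointness count alone. I would therefore split the argument into the two implications and concentrate the work on showing that condition (2), as stated here, already forces $|\cL|=x\gauss{n}{1}_q$, after which the deep part of the equivalence (relating the disjointness count to membership of $\chi_\cL$ in $\im(A_n^T)$) is inherited from \cite{Me2}.

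The implication (1) $\Rightarrow$ (2) is the easy direction. If $\cL$ is a Cameron-Liebler line class of parameter $x$, then in particular $|\cL|=x\gauss{n}{1}_q$, so all hypotheses of the cited (size-assuming) result are met and both counting statements in (2) follow. For the parallel-class statement I would note explicitly that, by the Example following Definition \ref{definition_2.12}, the $q^{n-1}$ affine lines sharing a fixed point at infinity form a $1$-spread; hence Definition \ref{definition_2.12} gives exactly $x$ lines of $\cL$ through each point at infinity.

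The implication (2) $\Rightarrow$ (1) is where the improvement lies, and here I would run the same double count as in Corollary \ref{EquivalencenoParamP}. Count the pairs $(\ell_1,\ell_2)$ of affine lines with $\ell_1\in\cL$, $\ell_2\notin\cL$ and $\ell_1\cap\ell_2=\emptyset$ (no common affine point). Write $D$ for the number of affine lines disjoint from a fixed affine line and $c=q^2\frac{q^{n-2}-1}{q-1}+1$ for the coefficient in (2). Summing over $\ell_1\in\cL$, each contributes $D-c(x-1)$ partners $\ell_2$, so the number of pairs equals $|\cL|\bigl(D-c(x-1)\bigr)$. Summing over $\ell_2\notin\cL$, each receives $cx$ lines of $\cL$ disjoint from it, giving $\bigl(q^{n-1}\frac{q^n-1}{q-1}-|\cL|\bigr)cx$ pairs, where $q^{n-1}\frac{q^n-1}{q-1}$ is the total number of affine lines. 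Equating the two expressions and solving the resulting linear equation for $|\cL|$ yields $|\cL|=x\gauss{n}{1}_q$. Once the size is known, all hypotheses of the cited result are in place, and it upgrades (2) to the full Cameron-Liebler property, completing (2) $\Rightarrow$ (1).

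The main obstacle is the input to the double count, namely the exact value $D=q^{n-1}\frac{q^n-1}{q-1}-q^2\frac{q^{n-1}-1}{q-1}-1$ of affine lines meeting a fixed line in no affine point, together with the verification that $c$ is precisely the per-direction disjointness density; here one must be careful that ``disjoint'' is read in the affine sense, so that parallel lines count as disjoint. Granting this, the algebra collapses neatly: one checks that $D+c=q^{n-1}c$, so the factor $c$ cancels and the pair-count equation reduces immediately to $|\cL|=x\frac{q^n-1}{q-1}$. The only conceptual care needed is to confirm, as a sanity check on $c$ (for instance against the affine point-pencil, where $x=1$), that for $\ell\notin\cL$ there are indeed $cx$ lines of $\cL$ disjoint from $\ell$ and for $\ell\in\cL$ there are $c(x-1)$, matching the two regimes used in the count.
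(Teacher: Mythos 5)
Your proposal is correct and follows essentially the same route as the paper: both directions are reduced to the cited result from \cite{Me2}, and the missing size condition $|\cL|=x\gauss{n}{1}_q$ is recovered by the identical double count of pairs $(\ell_1,\ell_2)$ with $\ell_1\in\cL$, $\ell_2\notin\cL$ disjoint in $\ag(n,q)$. The only (cosmetic) difference is that you compute the number $D$ of affine lines disjoint from a fixed line directly in the affine space, while the paper obtains the same value via the projective closure; your observation that $D+c=q^{n-1}c$ is a clean way to finish the algebra.
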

\begin{proof}
Due to Theorem \ref{EquivalenceAff}, we only need to prove that from (2) to (1) it holds that $|\cL|=x\gauss{n}{1}_q$ and thus the parameter is indeed $x$. This proof is completely similar as for Corollary \ref{EquivalencenoParamP}. Count the pairs $(\ell_1, \ell_2)$ such that $\ell_1\in\cL$, $\ell_2\not \in \cL$, and both are disjoint in $\ag(n,q)$. The number of lines in $\ag(n,q)$ disjoint to a fixed line is equal to $q^4\gauss{n-1}{2}_q-q^2\gauss{n-1}{2}_q+q^{n-1}-1$, which equals the number of lines disjoint in the projective closure, minus those at infinity and adding those affine lines through the unique point at infinity. So  this number equals
$$|\cL|\left( q^4\gauss{n-1}{2}_q-q^2\gauss{n-1}{2}_q+q^{n-1}-1-(x-1)(q^2\frac{q^{n-2}-1}{q-1}+1)\right),$$
and is also equal to
$$\left( \gauss{n+1}{2}_q-\gauss{n}{2}_q-|\cL|\right) x(q^2\frac{q^{n-2}-1}{q-1}+1).$$
Working out this equality, we obtain that $|\cL|=x\gauss{n}{1}_q$. Thus the parameter equals $x$.
\end{proof}

We now give some trivial examples of such Cameron-Liebler $k$-sets. One should compare this example with the given examples in $\pg(n,q)$.

\begin{Ex} The following examples and their complements are known as \emph{trivial examples}: (1) the empty set is a Cameron-Liebler $k$-set, of parameter $x=0$, (2) the set of 
all $k$-spaces through a fixed point $p$, of parameter $x=1$.
\end{Ex}

Note that the set of $k$-spaces contained in a hyperplane of $\ag(n,q)$ is not a Cameron-Liebler $k$-set of $\ag(n,q)$, which is different from the projective case; compare with Example~\ref{ex:trivial_projective}.

Finally, we recall the following theorem from  \cite{Me2}, connecting Cameron-Liebler $k$-sets in $\ag(n,q)$ and $\pg(n,q)$.

\begin{St}\cite[Theorem 1.2]{Me2}\label{ToInftyAndBeond}
Every Cameron-Liebler $k$-set in $\ag(n,q)$ is a Cameron-Liebler $k$-set in its projective closure $\pg(n,q)$, which has the same parameter $x$.
\end{St}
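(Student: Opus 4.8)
The plan is to reduce everything to the orthogonality reformulation of the two definitions. For the projective space, the first equivalence in Theorem~\ref{EquivalenceProj} gives $\im(P_n^T)=(\ker(P_n))^\perp$, and the same elementary rank argument (valid over the field of scalars used for characteristic vectors) gives $\im(A_n^T)=(\ker(A_n))^\perp$ for the affine incidence matrix. Thus it suffices to show that $\chi_\cL$, regarded as a vector indexed by all $k$-spaces of $\pg(n,q)$ and supported on the affine ones, is orthogonal to every element of $\ker(P_n)$.

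To set this up, I would fix the hyperplane at infinity $\pi_\infty$ and split the $k$-spaces of $\pg(n,q)$ into the \emph{affine} ones (those not contained in $\pi_\infty$, which are exactly the $k$-spaces of $\ag(n,q)$) and those contained in $\pi_\infty$; likewise split the points into affine points and points of $\pi_\infty$. Writing $P_n$ in the corresponding block form, the crucial structural observation is that an affine point lies on no $k$-space contained in $\pi_\infty$. Hence the block of $P_n$ indexed by affine points and affine $k$-spaces is exactly $A_n$, while the block indexed by affine points and infinite $k$-spaces vanishes (here one uses that incidence in $\ag(n,q)$ coincides with projective containment in the projective closure).

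The key step is then the following. Given $u\in\ker(P_n)$, let $u'$ be its restriction to the coordinates indexed by affine $k$-spaces. For each affine point $p$, the equation $(P_n u)_p=0$ involves only the affine $k$-spaces through $p$ by the observation above, and this is precisely the $p$-th equation of $A_n u'=0$; therefore $u'\in\ker(A_n)$. Since $\cL$ is a Cameron-Liebler $k$-set of $\ag(n,q)$, its affine characteristic vector lies in $\im(A_n^T)=(\ker(A_n))^\perp$, so $\langle \chi_\cL, u'\rangle=0$. Because $\cL$ consists only of affine $k$-spaces, this inner product equals the inner product $\langle \chi_\cL, u\rangle$ taken over all $k$-spaces of $\pg(n,q)$. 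Thus $\langle\chi_\cL,u\rangle=0$ for every $u\in\ker(P_n)$, which gives $\chi_\cL\in(\ker(P_n))^\perp=\im(P_n^T)$; that is, $\cL$ is a Cameron-Liebler $k$-set of $\pg(n,q)$.

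Finally, the parameter is preserved essentially for free: both the affine and the projective parameter are defined by $|\cL|=x\gauss{n}{k}_q$ with the same normalizing factor, and $\cL$ is literally the same set of $k$-spaces in both spaces, so the two values of $x$ coincide. The only real content of the argument is the block observation that affine points meet only affine $k$-spaces, which forces the restriction of $\ker(P_n)$ into $\ker(A_n)$; once this is in place the proof is pure linear algebra and I expect no computational obstacle. The one point I would state carefully is that the identity $\im(M^T)=(\ker(M))^\perp$ is needed for $A_n$ as well as for $P_n$, but this is a general fact obtained by the same rank count.
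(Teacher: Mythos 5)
Your argument is correct. Note first that the paper does not actually prove this statement; it is quoted from \cite[Theorem 1.2]{Me2}, so there is no in-text proof to match yours against. What you have written is a sound, self-contained proof: the block decomposition of $P_n$ with rows split into affine points and points of $\pi_\infty$ and columns into affine $k$-spaces and $k$-spaces inside $\pi_\infty$ does have the form $\begin{pmatrix} A_n & 0 \\ C & D\end{pmatrix}$, since an affine point lies on no $k$-space contained in $\pi_\infty$ and since incidence of an affine point with an affine $k$-space agrees with incidence with its projective closure; this forces the restriction map $\ker(P_n)\to\ker(A_n)$, and the orthogonality computation then goes through because $\chi_\cL$ is supported on the affine $k$-spaces. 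The identity $\im(M^T)=(\ker M)^\perp$ holds over $\mathbb{R}$ for any matrix, so its use for $A_n$ is unproblematic, and the parameter claim is immediate because both definitions normalize by the same quantity $\gauss{n}{k}_q$, which is simultaneously the number of $k$-spaces through a point in $\pg(n,q)$ and in $\ag(n,q)$. It is worth observing that your proof is essentially the kernel-side dual of the paper's proof of Theorem~\ref{Subsp}: there one passes from the ambient space to a subspace by exhibiting a preimage under $P_\pi^T$ directly from the block structure, whereas you pass from the substructure ($\ag(n,q)$) to the ambient space ($\pg(n,q)$) by restricting kernel vectors, which is the natural direction when the smaller incidence matrix sits in the corner with a zero block to its right rather than below it. The only stylistic remark is that you should say explicitly that the scalars are real (or at least of characteristic zero), since $\im(M^T)=(\ker M)^\perp$ can fail over finite fields; the paper's conventions make this implicit.
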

This enables us to translate results in $\pg(n,q)$ to $\ag(n,q)$ and will be of great use. We also have a converse result.
\begin{St}\cite[Theorem 1.1]{Me2}\label{ToInftyAndBeondConv}
Let $\cL$ be a Cameron-Liebler $k$-set with parameter $x$ in PG($n,q$) which does not contain $k$-spaces in some hyperplane $H$. 
Then $\cL$ is a Cameron-Liebler $k$-set with parameter $x$ of $\ag(n, q) \cong \pg(n, q) \setminus H$.
\end{St}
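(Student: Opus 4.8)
The plan is to argue directly with the linear-algebraic definition, by comparing the kernels of the two incidence matrices. Identifying $\ag(n,q)$ with $\pg(n,q)\setminus H$, an affine $k$-space is exactly a projective $k$-space not contained in $H$, and $A_n$ is the submatrix of $P_n$ obtained by keeping the rows indexed by affine points and the columns indexed by affine $k$-spaces. Since $\cL$ contains no $k$-space inside $H$, its characteristic vector $\chi_\cL$ is supported on the affine columns, so it may be regarded simultaneously as a vector on all projective $k$-spaces and as a vector on the affine $k$-spaces. By Theorem~\ref{EquivalenceProj} we know $\chi_\cL \in \im(P_n^T) = (\ker P_n)^\perp$, and by definition we must show $\chi_\cL \in \im(A_n^T) = (\ker A_n)^\perp$; equivalently, I would show that $\sum_{K \in \cL} u_K = 0$ for every $u \in \ker A_n$.

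The crux is an extension lemma: \emph{every $u \in \ker A_n$ extends to a vector $\tilde u \in \ker P_n$ that agrees with $u$ on the affine $k$-spaces} (its values on the $k$-spaces contained in $H$ being chosen suitably). Granting this, the conclusion is immediate, for since $\chi_\cL$ vanishes on the $k$-spaces inside $H$ we obtain $\sum_{K \in \cL} u_K = \langle \chi_\cL, \tilde u\rangle = 0$, using $\tilde u \in \ker P_n$ and $\chi_\cL \perp \ker P_n$. The parameter is then automatically preserved: it is determined by $|\cL| = x\gauss{n}{k}_q$, and since $\cL$ is literally the same set of $k$-spaces and the coefficient $\gauss{n}{k}_q$ is unchanged, the affine parameter is again $x$.

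To prove the extension lemma I would verify the kernel condition of $P_n$ point by point. For an affine point $a$, every projective $k$-space through $a$ is automatically affine (it cannot lie in $H$), so $(P_n \tilde u)_a = \sum_{K \ni a} u_K = (A_n u)_a = 0$ holds for free. For a point $p \in H$, the $k$-spaces through $p$ split into those contained in $H$ and the affine ones, so setting $\beta_p := \sum_{K \ni p,\ K \text{ affine}} u_K$ and $b:=(\beta_p)_{p\in H}$, the requirement $(P_n\tilde u)_p=0$ becomes
\[
\sum_{K \subseteq H,\ K \ni p} \tilde u_K = -\beta_p \qquad \text{for all } p \in H,
\]
which is precisely the linear system $P_{n-1}\,\tilde u^{H} = -b$, where $P_{n-1}$ is the point-($k$-space) incidence matrix of $H \cong \pg(n-1,q)$ and $\tilde u^{H}$ collects the still-free values on the $k$-spaces of $H$.

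The main obstacle is therefore the solvability of this system, i.e.\ the surjectivity of $P_{n-1}$ over $\mathbb{R}$. I would establish this via the Gram matrix $P_{n-1}P_{n-1}^T$, whose $(p,p')$-entry counts the $k$-spaces of $H$ through $p$ and $p'$: this equals $\gauss{n-1}{k}_q$ on the diagonal and the constant $\gauss{n-2}{k-1}_q$ off the diagonal, so $P_{n-1}P_{n-1}^T = \alpha I + \mu J$ with $\mu = \gauss{n-2}{k-1}_q > 0$ and $\alpha = \gauss{n-1}{k}_q - \gauss{n-2}{k-1}_q$. The latter is positive because $n-1 \geq 2k > k$ forces $\tfrac{q^{n-1}-1}{q^{k}-1}>1$ and hence $\gauss{n-1}{k}_q > \gauss{n-2}{k-1}_q$. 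A matrix of the form $\alpha I + \mu J$ with $\alpha,\mu>0$ is positive definite, hence invertible, so $P_{n-1}$ has full row rank and $P_{n-1}\tilde u^{H} = -b$ is always solvable. This closes the extension lemma and completes the proof.
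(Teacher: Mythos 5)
Your argument is correct. Note, however, that the paper does not prove this statement at all: Theorem~\ref{ToInftyAndBeondConv} is imported verbatim from \cite[Theorem 1.1]{Me2}, so there is no in-paper proof to compare against. Your proof is a clean, self-contained alternative that works entirely with the linear-algebraic definition. The decisive ingredient is your extension lemma, and its reduction to the solvability of $P_{n-1}\,\tilde u^{H}=-b$ is exactly right: since $\cL$ avoids $[H]_k$, the vector $\chi_\cL$ ignores whatever values $\tilde u$ takes on the columns indexed by $k$-spaces of $H$, so any solution of that system yields $\sum_{K\in\cL}u_K=\langle\chi_\cL,\tilde u\rangle=0$, and hence $\chi_{\cL}|_{\ag(n,q)}\in(\ker A_n)^\perp=\im(A_n^T)$. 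Your verification that $P_{n-1}$ has full row rank via the Gram matrix $P_{n-1}P_{n-1}^T=\alpha I+\mu J$ is also correct (the diagonal entry $\gauss{n-1}{k}_q$ strictly exceeds the off-diagonal entry $\gauss{n-2}{k-1}_q$ because their ratio is $\frac{q^{n-1}-1}{q^k-1}>1$ for $n\geq 2k+1$, so $\alpha>0$ and the matrix is positive definite); this full-rank property of the point--($k$-space) incidence matrix of a projective space is classical, but proving it inline keeps the argument self-contained. Finally, the parameter statement is indeed immediate, since both the projective and the affine definitions normalise by the same factor $\gauss{n}{k}_q$ and the underlying set of $k$-spaces is unchanged. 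The only stylistic remark is that one could shortcut the whole argument using the spread characterisation (Definition~\ref{definition_2.12}) together with Theorem~\ref{EquivalenceProj}~(2), which is closer in spirit to how the paper handles the affine--projective transfer elsewhere (e.g.\ Theorem~\ref{th:affine-induced}); your kernel-extension route is a genuinely different and equally valid path.
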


These theorems make sure that all results in $\ag(n,q)$ are at least as strong as in $\pg(n,q)$.
\begin{St}\cite[Theorem 6.5]{Me2}\label{Unique1AG}
Suppose that $\cL$ is a Cameron-Liebler  $k$-set of parameter $x=1$ in $\ag(n,q)$ for $n\geq 2k+1$, then $\cL$ consists of all the $k$-spaces through a fixed point.
\end{St}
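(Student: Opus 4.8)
The plan is to reduce the affine statement to the projective classification already recorded in Theorem~\ref{CharX=1PG(n,q)}, using the embedding of $\ag(n,q)$ in its projective closure. By Theorem~\ref{ToInftyAndBeond}, the affine Cameron-Liebler $k$-set $\cL$ of parameter $x=1$ is simultaneously a Cameron-Liebler $k$-set of parameter $x=1$ in $\pg(n,q)$, where each affine $k$-space is identified with the projective $k$-space it spans in the closure. Since $x=1$ forces $|\cL|=\gauss{n}{k}_q\neq 0$, the set $\cL$ is non-empty, so Theorem~\ref{CharX=1PG(n,q)} applies and tells us that, as a projective Cameron-Liebler $k$-set, $\cL$ is either (a) the set of all $k$-spaces through a fixed point $p$ of $\pg(n,q)$, or (b) $n=2k+1$ and $\cL$ is the set of all $k$-spaces contained in some hyperplane $H$ of $\pg(2k+1,q)$.

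The decisive extra piece of information is that every member of $\cL$ is an \emph{affine} $k$-space, that is, a projective $k$-space not contained in the hyperplane at infinity $\pi_\infty$. First I would use this to eliminate case (b): if $H=\pi_\infty$ then $\cL$ would contain no affine $k$-space at all, contradicting $\cL\neq\emptyset$; and if $H\neq\pi_\infty$, then $H\cap\pi_\infty$ is a $(2k-1)$-space, which (since $2k-1\geq k$ for $k\geq 1$) contains at least one $k$-space, and that $k$-space lies inside $\pi_\infty$ yet would belong to $\cL$ — again a contradiction. Next I would treat case (a): if the fixed point $p$ lay on $\pi_\infty$, then, because $\dim\pi_\infty=n-1\geq 2k\geq k$, there would be a $k$-space through $p$ contained in $\pi_\infty$, and this non-affine $k$-space would belong to $\cL$, which is impossible. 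Hence $p\notin\pi_\infty$, so $p$ is an affine point, and every $k$-space through $p$ is automatically affine. Therefore $\cL$ consists precisely of all affine $k$-spaces through the affine point $p$, which is the assertion.

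The argument is essentially a clean elimination of cases, so there is no deep computational obstacle; the only point demanding care is the faithful translation between affine and projective $k$-spaces. Concretely, I must verify in each excluded configuration that there genuinely exists a $k$-space lying inside $\pi_\infty$ that would be forced into $\cL$, and this is exactly where the dimension hypothesis $n\geq 2k+1$ (giving $\dim(H\cap\pi_\infty)=2k-1\geq k$ in case (b) and $\dim\pi_\infty=n-1\geq k$ in case (a)) enters and must be checked explicitly rather than invoked generically.
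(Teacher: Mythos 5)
The paper does not prove this statement at all: it is imported verbatim as \cite[Theorem 6.5]{Me2}, so there is no internal proof to compare against. Your argument is correct and is exactly the kind of derivation the paper's own machinery supports: Theorem~\ref{ToInftyAndBeond} lifts $\cL$ to a parameter-$1$ Cameron--Liebler $k$-set in the projective closure, Theorem~\ref{CharX=1PG(n,q)} reduces to the two projective models, and the constraint that every element of $\cL$ is an affine $k$-space (i.e.\ not contained in $\pi_\infty$) kills the hyperplane case and forces the vertex of the point-pencil to be affine. Your dimension checks are the right ones and are correct: in the hyperplane case $n=2k+1$ gives $\dim(H\cap\pi_\infty)=2k-1\geq k$, so $H$ would contribute a $k$-space of $\cL$ lying in $\pi_\infty$; and if the vertex $p$ were at infinity, $\dim\pi_\infty=n-1\geq 2k\geq k$ yields a $k$-space of the pencil inside $\pi_\infty$. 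The only point worth making explicit is the identification underlying Theorem~\ref{ToInftyAndBeond}, namely that $\cL$ is viewed in $\pg(n,q)$ as the set of projective closures of its members, so that membership of a $k$-space contained in $\pi_\infty$ is genuinely impossible; you use this correctly. Note also that your reduction mirrors the paper's own proof of the companion result Theorem~\ref{PPAG}, which runs the same pipeline (lift to $\pg(n,q)$, apply the projective classification, then observe the vertex must be affine), so your route is consistent with how the authors themselves handle the affine case.
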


\section{Cameron-Liebler sets in subspaces}
In this section, we will focus on the behaviour of Cameron-Liebler sets in subspaces of $\pg(n,q)$ and $\ag(n,q)$. We will treat both cases independently. This was already observed for Cameron-Liebler line classes in $\pg(n,q)$ in \cite{DrudgeThesis}. We will focus on their results and we will generalize these in the following subsections.

\subsection{The projective case}

The following theorem is in fact a generalization of Theorem \ref{DrudgePointPencil}. In his PhD thesis, Drudge was interested in the behaviour of Cameron-Liebler line classes in subspaces. It was proven that every Cameron-Liebler line class in $\pg(n,q)$ restricted to a $3$-dimensional subspace is a Cameron-Liebler line class in that subspace.  The generalization of this particular result was already briefly discussed in \cite{BDF}. Since it was not written down in a lemma or theorem, we formally write it down in the following theorem.

\begin{St}[Folklore]\label{Subsp}
Consider a Cameron-Liebler $k$-space $\mathcal{L}$ in $\pg(n,q)$, with $n \geq 2k+1$, 
and let $\pi$ be a subspace of dimension $i\geq k+1$. Then $\mathcal{L}\cap [\pi]_k$  is also a Cameron-Liebler $k$-space in $\pi$.
\end{St}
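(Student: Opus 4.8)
The plan is to use the characterization of Cameron-Liebler $k$-sets given in Corollary~\ref{EquivalencenoParamP}: a set of $k$-spaces is a Cameron-Liebler $k$-set of parameter $x'$ in a projective space if and only if, for every $k$-space $K$ in that space, the number of elements of the set disjoint from $K$ equals $(x'-\chi(K))$ times an explicit constant depending only on the ambient dimension. So it suffices to work entirely inside $\pi \cong \pg(i,q)$ and verify this disjointness condition for $\mathcal{L}\cap[\pi]_k$, using what we already know about $\mathcal{L}$ in $\pg(n,q)$. The key point is that disjointness of two $k$-spaces that both lie in $\pi$ is the same notion whether computed in $\pi$ or in $\pg(n,q)$; this is what lets us transfer counts from the ambient space to the subspace.

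First I would fix a $k$-space $K \subseteq \pi$ and count, in two different ways, the number of elements of $\mathcal{L}$ (the ambient family) that are disjoint from $K$. On one hand, by property~(2) of Theorem~\ref{EquivalenceProj} applied in $\pg(n,q)$, this number is exactly $(x-\chi_{\mathcal{L}}(K))\gauss{n-k-1}{k}_q q^{k^2+k}$. On the other hand, I would partition the $k$-spaces of $\pg(n,q)$ disjoint from $K$ according to how they meet $\pi$, i.e.\ according to the dimension $j$ of the intersection of the $k$-space with $\pi$. The elements of $\mathcal{L}$ that lie entirely in $\pi$ (the case where the intersection is the whole $k$-space, $j=k$) are precisely $\mathcal{L}\cap[\pi]_k$, and these contribute the quantity I actually care about; the remaining terms correspond to $k$-spaces of $\mathcal{L}$ meeting $\pi$ in a proper subspace.

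The main obstacle will be controlling those remaining terms, i.e.\ the contribution of elements of $\mathcal{L}$ that are disjoint from $K$ but only partially inside $\pi$. The natural approach is to average or sum over all $k$-spaces $K$ inside $\pi$ simultaneously, rather than analyzing a single $K$, so that the combinatorial counts involving the intersection pattern of a fixed element of $\mathcal{L}$ with the subspaces of $\pi$ become uniform and expressible through Gaussian binomial identities. Concretely, I would set up a double count over pairs $(K,M)$ with $K\in[\pi]_k$ and $M\in\mathcal{L}$ disjoint from $K$, split according to $\dim(M\cap\pi)$, and use the homogeneity of $\pi$ (the fact that the number of $k$-spaces of $\pi$ disjoint from a fixed subspace of given dimension depends only on the dimensions) to isolate the number of $K\in[\pi]_k$ disjoint from $(\mathcal{L}\cap[\pi]_k)$-members. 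Once the cross-terms are shown to depend on $\chi_{\mathcal{L}}(K)$ in the required affine-linear way, comparing coefficients yields that $\mathcal{L}\cap[\pi]_k$ satisfies the disjointness identity of Corollary~\ref{EquivalencenoParamP} inside $\pi$, for some parameter $x'$.

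Finally, I would identify the resulting parameter. Since Corollary~\ref{EquivalencenoParamP} guarantees both that $\mathcal{L}\cap[\pi]_k$ is Cameron-Liebler in $\pi$ and that it has a well-defined parameter $x'$ determined by its size via $|\mathcal{L}\cap[\pi]_k| = x'\gauss{i}{k}_q$, the conclusion follows once the disjointness condition is verified. I expect the cleanest route is to avoid computing $x'$ explicitly and instead let Corollary~\ref{EquivalencenoParamP} supply it, so that the entire argument reduces to establishing a single counting identity inside $\pi$ and matching it against the ambient identity already provided by Theorem~\ref{EquivalenceProj}.
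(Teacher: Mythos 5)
Your plan has a genuine gap at exactly the point you flag as ``the main obstacle,'' and the proposed fix does not close it. To apply Corollary~\ref{EquivalencenoParamP} inside $\pi$ you must verify, for \emph{each individual} $k$-space $K\subseteq\pi$, that the number of elements of $\mathcal{L}\cap[\pi]_k$ disjoint from $K$ equals $(x'-\chi_{\mathcal{L}}(K))\gauss{i-k-1}{k}_q q^{k^2+k}$ with one and the same $x'$. The ambient identity from Theorem~\ref{EquivalenceProj}(2) only gives you the total number of $M\in\mathcal{L}$ disjoint from $K$; to extract the sub-count of those $M$ lying inside $\pi$ you would need to know, for each $j<k$, how many $M\in\mathcal{L}$ disjoint from $K$ satisfy $\dim(M\cap\pi)=j$. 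These cross-terms are \emph{not} determined by the Cameron-Liebler property of $\mathcal{L}$: the parameter $x$ controls intersections of members of $\mathcal{L}$ with $k$-spaces, not their intersection pattern with a higher-dimensional subspace $\pi$, and different Cameron-Liebler sets of the same parameter can distribute very differently relative to $\pi$. Your proposed remedy --- summing over all $K\in[\pi]_k$ --- produces a single double-counting identity of the form $\bigl(x\gauss{i+1}{k+1}_q-|\mathcal{L}\cap[\pi]_k|\bigr)c=\sum_j N_j d_j$, where $N_j$ counts the $M\in\mathcal{L}$ with $\dim(M\cap\pi)=j$; this one equation in many unknowns cannot isolate the contribution of $\mathcal{L}\cap[\pi]_k$, and in any case averaging over $K$ destroys precisely the per-$K$ information that the characterization requires. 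So the argument as sketched does not go through.

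For comparison, the paper's proof avoids counting entirely and works directly with the definition $\chi_{\mathcal{L}}\in\im(P_n^T)$: after ordering the points and $k$-spaces so that those of $\pi$ come first, the incidence matrix has block form with $P_\pi$ in the upper-left corner and a zero block below it (a point outside $\pi$ lies on no $k$-space contained in $\pi$), whence the restriction of $\chi_{\mathcal{L}}$ to $[\pi]_k$ equals $P_\pi^T v_1$ and lies in $\im(P_\pi^T)$. Equivalently, $\chi_{\mathcal{L}}(M)=\sum_{p\in M}v_p$ for a weighting $v$ of the points, and restricting $v$ to the points of $\pi$ witnesses the same identity for all $M\subseteq\pi$. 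If you want to keep a combinatorial flavour, the right move is this algebraic restriction argument, not the disjointness count.
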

\begin{proof}
Let $\pi$ be a subspace of dimension $i\geq k+1$. After a suitable reordering of the points and $k$-spaces, 
it is clear that the point-($k$-space) incidence matrix $P_n$ of $\pg(n,q)$  is equal to 

\begin{equation}\label{IncidenceMatrixPn}
P_n= \begin{pmatrix}
P_\pi & B_2  \\
\bar{0} & B_3
\end{pmatrix},
\end{equation}
where $P_\pi$ is the incidence matrix of $\pi$.

Since $\mathcal{L}$ is a Cameron-Liebler $k$-set in $\pg(n,q)$, the characteristic vector $\chi_\mathcal{L} \in \im(P_n^T)$. 
So there exists a vector $\begin{pmatrix} v_1 \\ v_2 \end{pmatrix}$ such that 
$$\chi_{\mathcal{L}}=  \begin{pmatrix}
P_\pi^T & \bar{0}^T \\
B_2^T & B_3^T
\end{pmatrix} \cdot \begin{pmatrix} v_1 \\ v_2 \end{pmatrix}= \begin{pmatrix} P_\pi^T \cdot v_1 \\ B_2^T \cdot v_1 +B_3^T \cdot v_2 \end{pmatrix}.$$
Hence, we can conclude that $\chi_{\mathcal{L}}$ restricted to $\pi$ is equal to $P_\pi^T \cdot v_1$. Thus $\chi_{\mathcal{L}_{|\pi}} \in $ Im($P_\pi^T$), 
which proves the theorem.
\end{proof}
As indicated in the introduction, Drudge (\cite{DrudgeThesis}) used intersection properties with $3$-spaces to 
characterize Cameron-Liebler line classes in $\pg(n,q)$.  A first step in this approach is the following theorem.

\begin{St}\cite[Theorem 6.1]{DrudgeThesis}\label{DrudgePointPencil}
Suppose that $\cL$ is a Cameron-Liebler line class in $\pg(n,q)$, such that there exists a $3$-dimensional 
subspace $\pi$ for which $\cL\cap [\pi]_1$ is a point-pencil. Then $\cL$ is a point-pencil in $\pg(n,q)$.
\end{St}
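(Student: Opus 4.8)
The plan is to work directly from the algebraic definition of a Cameron--Liebler line class. Since $\chi_\cL\in\im(P_n^T)$, there is a real weighting $v$ of the points of $\pg(n,q)$ with $\sum_{s\in\ell}v_s=\chi_\cL(\ell)\in\{0,1\}$ for every line $\ell$, a line lying in $\cL$ exactly when this sum equals $1$. One might hope that, because $\cL\cap[\pi]_1$ is a pencil and so has parameter $1$ inside $\pi$, the class $\cL$ has parameter $1$ as well; but this is false in general, because restriction does not preserve the parameter. (The trivial class of all lines in a hyperplane has parameter $\tfrac{q^{n-1}-1}{q^2-1}$, yet restricts to a parameter-$1$ class in a generic $3$-space.) Instead I would use $v$ to read off the global parameter, anchoring the computation at the distinguished point $p$ carrying the pencil.

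The first step is a local double count inside $\pi\cong\pg(3,q)$, where the lines of $\cL$ are exactly the lines on $p$. Summing $\sum_{s\in\ell}v_s$ over the lines of $\pi$ through $p$, and separately over the lines of $\pi$ missing $p$, produces two linear relations between $v_p$ and the total weight $\sum_{s\in\pi}v_s$; solving them forces $v_p=1$. The second step repeats the double count globally: summing $\chi_\cL(\ell)$ over all lines $\ell$ through $p$ counts the lines of $\cL$ on $p$, while expanding via $v$ gives $(\theta-1)v_p+x$, where $\theta=\gauss{n}{1}_q$ is the number of lines on a point and $x=\sum_s v_s$ is the parameter (the identity $x=\sum_s v_s$ following from $|\cL|=\theta\sum_s v_s$). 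Since at most $\theta$ lines pass through $p$ and $v_p=1$, we obtain $x\le 1$.

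It remains to pin $x$ down and identify the structure. As $\cL\supseteq\cL\cap[\pi]_1\neq\emptyset$ we have $x>0$, so $x\in(0,1]$, and Theorem~\ref{Non-existence01}, which forbids parameters in $(0,1)$, forces $x=1$. Theorem~\ref{CharX=1PG(n,q)} then says $\cL$ is a point-pencil or, only in the boundary case $n=2k+1=3$, the set of all lines of a plane. The second alternative contradicts the hypothesis: when $n=3$ we have $\pi=\pg(3,q)$, so $\cL\cap[\pi]_1=\cL$ would be all lines of a plane, which is not a pencil (all lines of a pencil share a common point, those of a plane do not). Hence $\cL$ is a point-pencil, and comparing its restriction to $\pi$ with the given pencil shows its vertex is $p$.

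The main obstacle is the second step: converting the purely local fact that $\cL$ looks like a pencil in a single $3$-space into a global bound on the parameter. The difficulty is exactly that parameters are not inherited under restriction, so one cannot read $x$ off the restricted class and must instead extract it from the weighting $v$; this is what the anchor $v_p=1$, produced by the first double count, makes possible. A secondary point needing care is the boundary case $n=2k+1=3$, where the second alternative in Theorem~\ref{CharX=1PG(n,q)} genuinely occurs and has to be ruled out by the structural difference between a pencil and the set of all lines of a plane.
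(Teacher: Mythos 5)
Your proof is correct. It follows the same overall skeleton as the paper's argument (the paper actually proves the $k$-space generalization, Theorem~\ref{DrudgeGeneralPG}, and derives this statement as the case $k=1$, $i=3$): in both, the crux is to show $x\le 1$ by counting the lines of $\cL$ through the pencil vertex $p$ and invoking the trivial bound $|[p]_1\cap\cL|\le\gauss{n}{1}_q$, after which Theorem~\ref{Non-existence01} and the $x=1$ classification finish the job. Where you differ is in how the key counting identity is obtained. The paper plugs the hypothesis $|[\pi]_1\cap\cL|=|[p,\pi]_1\cap\cL|$ into the ready-made four-term identity of Lemma~\ref{DrudgeArgum}; you instead work directly with a point weighting $v$ satisfying $P_n^Tv=\chi_\cL$, first extracting $v_p=1$ from two local double counts inside $\pi$ (your relations $(q^2+q)v_p+W=q^2+q+1$ and $W=v_p$ are both correct), and then reading off $|[p]_1\cap\cL|=(\gauss{n}{1}_q-1)+x$ globally. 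This makes the argument self-contained modulo the two classification theorems, at the cost of being specific to $k=1$ as written (the weighting computation generalizes, but less transparently than Lemma~\ref{DrudgeArgum} does). Your identification $x=\sum_s v_s$ is right, and you correctly handle the boundary case $n=3$ of Theorem~\ref{CharX=1PG(n,q)}, which the paper's proof of Theorem~\ref{DrudgeGeneralPG} disposes of in the same way by comparing with the structure of $\cL\cap[\pi]_1$.
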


The proof given in \cite{DrudgeThesis} is based on a statement that has later been generalized to the following lemma. 
We use a similar technique to obtain the $k$-space analogon.

\begin{Le}\cite[Lemma 2.12]{Jozefien}\label{DrudgeArgum}
Let $\mathcal{L}$ be a Cameron-Liebler $k$-set in $\pg(n,q)$, then we find the 
following equality for every point $p$ and every $i$-dimensional subspace $\tau$, 
with $p\in \tau$ and $i\geq k+1$,
\[
\left| [p]_k\cap \mathcal{L} \right|+ \frac{\gauss{n-1}{k}_q (q^k-1)}{\gauss{i-1}{k}_q (q^i-1)} \left| [\tau]_k\cap \mathcal{L} \right| 
= \frac{\gauss{n-1}{k}_q }{\gauss{i-1}{k}_q } \left| [p,\tau]_k\cap \mathcal{L} \right| + \frac{q^k-1}{q^n-1} \left| \mathcal{L} \right|\,.
\]
\end{Le}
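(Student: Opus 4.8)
The plan is to exploit the defining property of a Cameron-Liebler $k$-set, namely that $\chi_{\cL}\in\im(P_n^T)$. Concretely, this means there is a weight function $w$ on the points of $\pg(n,q)$ with $\chi_{\cL}(K)=\sum_{p\in K}w(p)$ for every $k$-space $K$: writing $\chi_{\cL}=P_n^T v$ and reading off the coordinate indexed by $K$ gives exactly this, with $w=v$. Setting $W:=\sum_{p}w(p)$ and $W_\tau:=\sum_{p\in\tau}w(p)$, I would first record the elementary incidence counts in $\pg(n,q)$: every point lies on $\gauss{n}{k}_q$ $k$-spaces, every pair of points (equivalently, every line) lies on $\gauss{n-1}{k-1}_q$ $k$-spaces, and inside $\tau\cong\pg(i,q)$ the corresponding numbers are $\gauss{i}{k}_q$ and $\gauss{i-1}{k-1}_q$.

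Next, I would evaluate each of the four quantities in the statement by double counting against $w$. Summing $\chi_{\cL}(K)=\sum_{p\in K}w(p)$ over all $k$-spaces gives $|\cL|=\gauss{n}{k}_q W$; summing over the $k$-spaces through a fixed point $p$ and separating the contribution of $p$ itself yields
\[
|[p]_k\cap\cL|=\gauss{n}{k}_q\,w(p)+\gauss{n-1}{k-1}_q\bigl(W-w(p)\bigr);
\]
summing over the $k$-spaces inside $\tau$ gives $|[\tau]_k\cap\cL|=\gauss{i}{k}_q W_\tau$; and summing over the $k$-spaces through $p$ that lie inside $\tau$ gives, by the same bookkeeping performed inside $\tau$,
\[
|[p,\tau]_k\cap\cL|=\gauss{i}{k}_q\,w(p)+\gauss{i-1}{k-1}_q\bigl(W_\tau-w(p)\bigr).
\]

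With these four expressions in hand, the claimed identity becomes a purely formal linear relation in the three quantities $w(p)$, $W$ and $W_\tau$, which I would verify by matching coefficients. The coefficient of $W$ reduces to $\gauss{n-1}{k-1}_q=\tfrac{q^k-1}{q^n-1}\gauss{n}{k}_q$, the coefficient of $W_\tau$ reduces to the same identity with $n$ replaced by $i$, and the coefficient of $w(p)$ reduces to the $q$-Pascal relation $\gauss{n}{k}_q-\gauss{n-1}{k-1}_q=q^k\gauss{n-1}{k}_q$ (again used also with $n$ replaced by $i$). Once all three coefficients agree, the identity follows.

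The conceptual content is light: the only genuine inputs are the existence of the point-weight function and the two families of Gaussian-binomial identities above. Accordingly, the step most prone to error — and the one I would treat most carefully — is the combinatorial bookkeeping: pinning down the exact numbers of $k$-spaces through a point and through a line, both in $\pg(n,q)$ and inside $\tau$, and correctly isolating the $w(p)$ term in the sums defining $|[p]_k\cap\cL|$ and $|[p,\tau]_k\cap\cL|$. Getting these constants right is precisely what makes the final coefficient comparison collapse onto the two clean $q$-identities, so I would double-check each count against a small case before assembling the relation.
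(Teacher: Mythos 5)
Your proof is correct. The paper does not actually prove this lemma --- it is imported by citation from \cite[Lemma 2.12]{Jozefien} --- so there is no in-paper argument to compare against, but your derivation stands on its own. The key observation, that $\chi_{\mathcal{L}}=P_n^Tv$ means $\chi_{\mathcal{L}}(K)=\sum_{p\in K}w(p)$ for a point-weighting $w=v$, is exactly right given the paper's convention that $P_n$ has rows indexed by points and columns by $k$-spaces. Your four double counts are also correct: a point of $\pg(n,q)$ lies on $\gauss{n}{k}_q$ $k$-spaces, a line on $\gauss{n-1}{k-1}_q$ of them, and the analogous counts inside $\tau\cong\pg(i,q)$ are obtained by substituting $i$ for $n$. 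The coefficient comparison then reduces, as you say, to $\gauss{n-1}{k-1}_q=\frac{q^k-1}{q^n-1}\gauss{n}{k}_q$ (and its $i$-analogue) for the $W$ and $W_\tau$ terms, and to the $q$-Pascal identity $\gauss{n}{k}_q-\gauss{n-1}{k-1}_q=q^k\gauss{n-1}{k}_q$ (and its $i$-analogue) for the $w(p)$ term; all three identities are standard and hold, so the relation follows. The only cosmetic remark is that the hypothesis $i\geq k+1$ is what guarantees $\gauss{i-1}{k}_q\neq 0$, so the denominators in the stated identity are nonzero; your argument is otherwise uniform in $i$.
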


\begin{St} \label{DrudgeGeneralPG}
Let $n\geq 2k+1$. Suppose that $\mathcal{L}$ is a Cameron-Liebler $k$-set in $\pg(n,q)$, 
such that there exists an $i$-space $\pi$, with $i\geq k+1$, and such that $\mathcal{L}\cap [\pi]_k$ 
consists of the set of $k$-spaces through a point $p \in \pi$. Then $\mathcal{L}$ is the  set of $k$-spaces 
through this same point $p$. 
\end{St}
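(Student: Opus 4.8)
The plan is to feed the hypothesis straight into Lemma~\ref{DrudgeArgum}, taking the $i$-space there to be $\pi$ and the point to be the centre $p$ of the pencil. This is the right move because both local quantities appearing in that lemma are completely pinned down by the hypothesis: since $\mathcal{L}\cap[\pi]_k$ is exactly the set of $k$-spaces through $p$ inside $\pi$, we have $|[\pi]_k\cap\mathcal{L}|=|[p,\pi]_k\cap\mathcal{L}|=\gauss{i}{k}_q$, the number of $k$-spaces through a point of $\pg(i,q)$. Substituting these two equal values into Lemma~\ref{DrudgeArgum} leaves a single linear equation whose only unknowns are $|[p]_k\cap\mathcal{L}|$, the number of elements of $\mathcal{L}$ through $p$ in the whole space, and the global parameter $x$, which enters through $|\mathcal{L}|=x\gauss{n}{k}_q$.

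First I would clean up the coefficients. Two standard Gaussian-coefficient identities do all the work: the absorption identity $\frac{q^k-1}{q^m-1}\gauss{m}{k}_q=\gauss{m-1}{k-1}_q$, applied once with $m=i$ on the left-hand side and once with $m=n$ to rewrite $\frac{q^k-1}{q^n-1}|\mathcal{L}|$ as $x\gauss{n-1}{k-1}_q$, together with the Pascal identity $\gauss{i}{k}_q-\gauss{i-1}{k-1}_q=q^k\gauss{i-1}{k}_q$. After these substitutions the factor $\gauss{i-1}{k}_q$ cancels and the dimension $i$ of $\pi$ disappears entirely, leaving the clean formula
\[
|[p]_k\cap\mathcal{L}| \;=\; q^k\gauss{n-1}{k}_q + x\gauss{n-1}{k-1}_q \;=\; \gauss{n}{k}_q+(x-1)\gauss{n-1}{k-1}_q,
\]
where the last equality is again Pascal's identity $\gauss{n}{k}_q=q^k\gauss{n-1}{k}_q+\gauss{n-1}{k-1}_q$.

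The conclusion then follows by a counting inequality rather than by invoking the full classification of $x=1$ sets. The left-hand side is at most the total number $\gauss{n}{k}_q$ of $k$-spaces through $p$, so the displayed formula forces $(x-1)\gauss{n-1}{k-1}_q\le 0$, that is, $x\le 1$. On the other hand $\mathcal{L}$ is non-empty, since it contains the whole pencil through $p$ in $\pi$, which is non-empty as $i\ge k+1$, so $x>0$; and Theorem~\ref{Non-existence01} rules out $x\in\,]0,1[$. Hence $x=1$. Plugging $x=1$ back in gives $|[p]_k\cap\mathcal{L}|=\gauss{n}{k}_q=|[p]_k|$, so every $k$-space through $p$ lies in $\mathcal{L}$, i.e. $[p]_k\subseteq\mathcal{L}$; comparing cardinalities $|[p]_k|=\gauss{n}{k}_q=x\gauss{n}{k}_q=|\mathcal{L}|$ then yields $\mathcal{L}=[p]_k$, exactly the point-pencil through $p$.

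The only delicate point I anticipate is the bookkeeping: one must correctly evaluate the two local intersection numbers $|[\pi]_k\cap\mathcal{L}|$ and $|[p,\pi]_k\cap\mathcal{L}|$, noting that they coincide precisely because the pencil is centred at $p$, and then apply the Gaussian identities in the right order so that everything depending on $i$ cancels. Once the clean formula for $|[p]_k\cap\mathcal{L}|$ is in hand the argument is forced, and in particular the direct cardinality comparison makes it unnecessary to separately exclude the ``all $k$-spaces in a hyperplane'' alternative of Theorem~\ref{Uniquenessx=1}.
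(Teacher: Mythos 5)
Your proof is correct and follows essentially the same route as the paper: substitute $|[\pi]_k\cap\mathcal{L}|=|[p,\pi]_k\cap\mathcal{L}|=\gauss{i}{k}_q$ into Lemma~\ref{DrudgeArgum}, use $|[p]_k\cap\mathcal{L}|\le\gauss{n}{k}_q$ to force $x\le 1$, and finish with Theorem~\ref{Non-existence01}. The only (harmless) difference is the endgame: the paper invokes the classification of parameter-one sets (Theorem~\ref{Uniquenessx=1}) to identify $\mathcal{L}$, whereas you read $\mathcal{L}=[p]_k$ off directly from the exact equation at $x=1$, which is a slightly more self-contained finish.
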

\begin{proof}
We will replicate the argument used in \cite[Theorem 6.1]{DrudgeThesis}. Consider the point $p$ and the $i$-dimensional subspace $\pi$, hence $p\in \pi$. 
From the assumptions on $\mathcal{L}$, 
\[
|[\pi]_k\cap \mathcal{L}|=|[p,\pi]_k\cap \mathcal{L}|=\gauss{i}{k}_q\,.
\]
Now using Lemma \ref{DrudgeArgum}, we obtain
$$|[p]_k\cap \mathcal{L}|+ \frac{\gauss{n-1}{k}_q (q^k-1)}{\gauss{i-1}{k}_q (q^i-1)} \gauss{i}{k}_q=\frac{\gauss{n-1}{k}_q }{\gauss{i-1}{k}_q } \gauss{i}{k}_q+\frac{q^k-1}{q^n-1}x\gauss{n}{k}_q.$$
We observe that in general, $|[p]_k\cap \mathcal{L}|\leq \gauss{n}{k}_q$, so the equation above yields
$$\gauss{n}{k}_q+ \frac{\gauss{n-1}{k}_q (q^k-1)}{\gauss{i-1}{k}_q (q^i-1)} \gauss{i}{k}_q\geq\frac{\gauss{n-1}{k}_q }{\gauss{i-1}{k}_q } \gauss{i}{k}_q+\frac{q^k-1}{q^n-1}x\gauss{n}{k}_q.$$
Hence,
$$(q^n-1)+\frac{(q^{n-k}-1)(q^k-1)}{q^{i-k}-1}-\frac{(q^{n-k}-1)(q^i-1)}{q^{i-k}-1}\geq (q^k-1)x.$$
Further calculations give that $x\leq 1$. So this fact, together with Theorem \ref{Non-existence01}, Theorem \ref{Uniquenessx=1} and the definition of $\mathcal{L}\cap[\pi]_k$, we find that $x=1$ and that $\mathcal{L}=[p]_k$.
\end{proof}
\begin{Opm}
The previous theorem is in fact an improvement of the result of Drudge for the case that $k=1$. If one should compare this theorem with Theorem \ref{DrudgePointPencil}, one can find that the case $i=2$ was not yet covered.
\end{Opm}

\subsection{The affine case}
As mentioned in the preliminaries, some results for Cameron-Liebler $k$-sets in $\pg(n,q)$ 
can be translated to results in $\ag(n,q)$. We will do this for Theorem \ref{Subsp}, hence we 
obtain the following result. We should note that this argument is similar to that of Theorem 3.8 in \cite{Me2}. 
But to make our article self contained, we decided to repeat it.
\begin{St}\label{th:affine-induced}
Let $\mathcal{L}$ be a Cameron-Liebler $k$-set in $\ag(n,q)$. Suppose now that $\pi$ is an 
$i$-dimensional subspace in $\ag(n,q)$, with $i\geq k+1$, then $\mathcal{L}\cap [\pi]_k$ 
is a Cameron-Liebler $k$-set in $\pi$.
\end{St}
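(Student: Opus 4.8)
The plan is to transcribe, almost verbatim, the linear-algebra argument used for Theorem~\ref{Subsp}, replacing the projective point-($k$-space) incidence matrix $P_n$ by its affine counterpart $A_n$. First I would fix the affine $i$-subspace $\pi$ and reorder the rows and columns of $A_n$ so that the points lying in $\pi$ and the $k$-spaces contained in $\pi$ are listed first. With respect to this ordering $A_n$ takes the block form
\begin{equation*}
A_n=\begin{pmatrix} A_\pi & B_2 \\ \bar{0} & B_3 \end{pmatrix},
\end{equation*}
where $A_\pi$ is the point-($k$-space) incidence matrix of $\pi\cong\ag(i,q)$, exactly mirroring~\eqref{IncidenceMatrixPn}.

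The only point that is not purely formal, and hence the place to be careful, is the vanishing of the lower-left block: a $k$-space contained in $\pi$ is a subset of $\pi$, so no point outside $\pi$ can be incident with it. This is the affine analogue of the zero block in the projective case, and since affine subspaces behave as expected under ``contained in'', I do not anticipate a genuine obstacle here. Granting this, the argument closes formally: because $\mathcal{L}$ is a Cameron-Liebler $k$-set in $\ag(n,q)$ we have $\chi_{\mathcal{L}}\in\im(A_n^T)$, so $\chi_{\mathcal{L}}=A_n^T\begin{pmatrix} v_1 \\ v_2\end{pmatrix}$ for some weight vector on the points, split as $v_1$ on the points of $\pi$ and $v_2$ on the rest. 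Reading off the coordinates indexed by the $k$-spaces contained in $\pi$, and using that the lower-left block is $\bar{0}$ so that $v_2$ contributes nothing there, yields
\begin{equation*}
\chi_{\mathcal{L}\cap[\pi]_k}=A_\pi^T v_1\in\im(A_\pi^T),
\end{equation*}
which is precisely the assertion that $\mathcal{L}\cap[\pi]_k$ is a Cameron-Liebler $k$-set in $\pi$.

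As an alternative I could avoid the matrix bookkeeping and argue by passing to the projective closure: Theorem~\ref{ToInftyAndBeond} promotes $\mathcal{L}$ to a Cameron-Liebler $k$-set in $\pg(n,q)$, Theorem~\ref{Subsp} restricts it to the projective closure $\bar{\pi}$ of $\pi$, and Theorem~\ref{ToInftyAndBeondConv} brings the restriction back down to the affine space $\pi$; its hypothesis is satisfied since $\mathcal{L}$ contains no $k$-space at infinity. I nonetheless prefer the direct computation above, both because it keeps the proof self-contained and because it works uniformly for all $i\geq k+1$ without having to check the dimensional conditions attached to the conversion theorems.
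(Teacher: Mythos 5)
Your matrix argument is correct, but it is a genuinely different proof from the one in the paper. The paper does not touch the incidence matrix at all: it invokes the equivalent spread-based characterisation (Definition~\ref{definition_2.12}), fixes a $(k-1)$-space $I$ at infinity inside $\pi\cap\pi_\infty$, and observes that the parallel class $E$ of affine $k$-spaces through $I$ not contained in $\pi$ completes any $k$-spread $\mathcal{S}$ of $\pi$ to a $k$-spread $\mathcal{S}\cup E$ of $\ag(n,q)$, whence $|\mathcal{L}\cap\mathcal{S}|=x-|\mathcal{L}\cap E|$ is constant. Your route works directly from the row-space definition via $\im(A_n^T)$, and the one step you flag as non-formal is indeed fine: a $k$-space contained in $\pi$ meets no point outside $\pi$, so the lower-left block of $A_n$ vanishes and the restriction of $\chi_{\mathcal{L}}$ to $[\pi]_k$ equals $A_\pi^T v_1\in\im(A_\pi^T)$. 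What each approach buys: yours is self-contained, needs no equivalence of definitions, and is uniform with the projective case (Theorem~\ref{Subsp}); the paper's spread argument is combinatorially more informative, since it exhibits the induced parameter explicitly as $x_\pi=x-|\mathcal{L}\cap E|$, i.e.\ it relates the parameter of the induced set to the number of elements of $\mathcal{L}$ in a single parallel class, which is the kind of quantitative control exploited later in the non-existence arguments. Your alternative detour through Theorems~\ref{ToInftyAndBeond}, \ref{Subsp} and \ref{ToInftyAndBeondConv} is also sound, but, as you note, it leans on statements whose ambient hypotheses ($n\geq 2k+1$) one would have to re-examine for small $i$, so the direct computation is the cleaner choice.
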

\begin{proof}
Consider the projective closure $\pg(n,q)$ of $\ag(n,q)$ and let $\pi_\infty$ be the 
hyperplane at infinity.  Let $\pi$ be an $i$-dimensional space in $\pg(n,q)$, hence 
$\dim(\pi\cap\pi_\infty)\geq k$. For a projective subspace $\pi'$, we will denote
its affine part as $\pi'_A$.

We will prove that $\mathcal{L}$ restricted to $\pi_{A}$ is a Cameron-Liebler $k$-set. 
Choose a $(k-1)$-space $I$ in $\pi\cap \pi_\infty$ and denote $E$ as the set of affine 
$k$-spaces containing $I$ at infinity and not contained in $\pi_A$. Then every $k$-space $K\in E$ is 
disjoint with every $k$-space contained in $\pi_A$, and all $k$-spaces in $E$ are parallel. Furthermore, 
the set of affine $k$-spaces in $E$ cover the affine points not in $\pi_A$. Hence, any affine $k$-spread 
$\mathcal{S}$ in $\pi_A$ can be extended to a $k$-spread $\mathcal{S}':=\mathcal{S}\cup E.$
Consequently,
$$x=|\mathcal{L}\cap\mathcal{S}'|=|\mathcal{L}\cap \mathcal{S}|+|\mathcal{L}\cap E|.$$
Since $|\mathcal{L}\cap E|$ is fixed and $\mathcal{S}$ is an arbitrary $k$-spread of $\pi_A$, $|\mathcal{L}\cap \mathcal{S}|$ is a constant for all $k$-spreads of $\pi_A$. The theorem follows by Definition \ref{definition_2.12}.
\end{proof}

The following theorem translates Theorem \ref{DrudgePointPencil} to the affine case.
\begin{St}\label{PPAG}
Let $n\geq 2k+1$ and $i\geq k+1$. Suppose that $\mathcal{L}$ is a Cameron-Liebler $k$-set in $\ag(n,q)$, such that there exists an $i$-space $\pi$ in $\ag(n,q)$, with $\mathcal{L}\cap[\pi]_k$ a set of  $k$-spaces through a fixed point $p$. Then $\mathcal{L}$ is a Cameron-Liebler $k$-set of parameter $x=1$ and, hence, the set of $k$-spaces through $p$.
\end{St}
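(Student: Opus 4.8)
The plan is to reduce the statement to its projective counterpart, Theorem~\ref{DrudgeGeneralPG}, via the embedding of Theorem~\ref{ToInftyAndBeond}. First I would pass to the projective closure $\pg(n,q)$ of $\ag(n,q)$, with hyperplane at infinity $\pi_\infty$. By Theorem~\ref{ToInftyAndBeond}, the affine Cameron-Liebler $k$-set $\mathcal{L}$ is a Cameron-Liebler $k$-set in $\pg(n,q)$ of the same parameter $x$; crucially, since $\mathcal{L}$ consists of affine $k$-spaces only, $\mathcal{L}$ contains no $k$-space lying inside $\pi_\infty$.

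Next, let $\overline{\pi}$ denote the projective closure of the affine $i$-space $\pi$; it is a projective $i$-space with $i\geq k+1$. The key step is to show that $\mathcal{L}\cap[\overline{\pi}]_k$ is precisely the set of all (projective) $k$-spaces through $p$ in $\overline{\pi}$, i.e. $[p,\overline{\pi}]_k$. Here I would use two observations. Since $p$ is an affine point, every $k$-space through $p$ is affine, and the $k$-spaces through $p$ inside $\overline{\pi}$ are in bijection with the affine $k$-spaces through $p$ inside $\pi$ (a projective $k$-space through the affine point $p$ cannot lie in $\pi_\infty$, so it coincides with the closure of its affine part); by hypothesis all of these lie in $\mathcal{L}$. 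Conversely, any $k$-space of $\mathcal{L}$ lying in $\overline{\pi}$ is affine, because $\mathcal{L}$ avoids $\pi_\infty$, hence is an affine $k$-space of $\pi$, and by the hypothesis $\mathcal{L}\cap[\pi]_k=[p,\pi]_k$ it must pass through $p$. Combining both inclusions, $\mathcal{L}\cap[\overline{\pi}]_k$ equals exactly the point-pencil of $k$-spaces through $p$ in $\overline{\pi}$.

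With this identification in hand, the hypotheses of Theorem~\ref{DrudgeGeneralPG} are met for $\mathcal{L}$ (viewed projectively) and the $i$-space $\overline{\pi}$. Applying it yields $x=1$ and that $\mathcal{L}$ is the set of all $k$-spaces through $p$ in $\pg(n,q)$. As $p$ is affine, these $k$-spaces are all affine, so back in $\ag(n,q)$ the set $\mathcal{L}$ is exactly the set of affine $k$-spaces through $p$, of parameter $x=1$, in agreement with Theorem~\ref{Unique1AG}.

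I expect the main obstacle to be the careful bookkeeping in the middle step: one must verify that passing to the projective closure neither adds $k$-spaces at infinity to the restricted set nor omits any $k$-space through $p$, so that the restriction to $\overline{\pi}$ is genuinely a \emph{full} projective point-pencil rather than merely its affine trace. Once this is established, the conclusion follows immediately from the already-proven projective theorem.
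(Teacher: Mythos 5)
Your proposal is correct and follows essentially the same route as the paper: pass to the projective closure via Theorem~\ref{ToInftyAndBeond}, observe that the point-pencil hypothesis transfers to the projective $i$-space, and apply Theorem~\ref{DrudgeGeneralPG}. The paper's own proof is just a terser version of this (it asserts the transfer of the property without the bookkeeping you spell out, and concludes the pencil's vertex is affine from the size of $\mathcal{L}$ rather than from the location of $p$).
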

\begin{proof}
Suppose that $\mathcal{L}$ is a Cameron-Liebler $k$-set in $\ag(n,q)$ such that there exists an $i$-space $\pi$ 
with the property of the theorem. Then, from Theorem~\ref{ToInftyAndBeond}, it follows that $\mathcal{L}$ 
is a Cameron-Liebler $k$-set in $\pg(n,q)$ with the same property. Hence, using 
Theorem~\ref{DrudgeGeneralPG}, we now obtain that $\mathcal{L}$ is a point-pencil in $\pg(n,q)$. 
Due to the size of $\mathcal{L}$, this point-pencil is defined by  an affine point.
\end{proof}

\section{A property of Cameron-Liebler $k$-sets}

In this section, we will focus on the connection between the parameter of a Cameron-Liebler $k$-set
and the parameters of the induced Cameron-Liebler $k$-sets in a subspace, in $\pg(n,q)$ and $\ag(n,q)$.

\subsection{Cameron-Liebler $k$-sets in $\pg(n,q)$}


\begin{Le}\label{Aid}
Suppose that $\mathcal{L}$ is a Cameron-Liebler $k$-set in $\pg(n,q)$ of parameter $x$. 
Then for every  $t$, such that  $2k+1 \leq t\leq n-1$ and $n\geq 2k+2$, and an arbitrary $k$-space $K$ in $\pg(n,q)$,
\begin{equation}\label{Thex}
x=\frac{\left(\sum_{K \in \pi_i}x_{\pi_i}-  \gauss{n-k}{t-k}_q\chi_\mathcal{L}(K) \right)} {\gauss{n-k-1}{t-k-1}_q }+\chi_\mathcal{L}(K)\,,
\end{equation}
where  $\chi_{\mathcal{L}}(K)$ is the value of the characteristic vector of $\mathcal{L}$ at position $K$,
$x_{\pi_i}$ the parameter of the Cameron-Liebler $k$-set induced in $\pi_i$, 
and where the sum runs over all $t$-spaces $\pi_i$ through $K$.
\end{Le}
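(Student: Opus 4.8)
The plan is to establish the identity by a double count, using Theorem~\ref{Subsp} to give meaning to the induced parameters $x_{\pi_i}$ and property (2) of Theorem~\ref{EquivalenceProj} (equivalently Corollary~\ref{EquivalencenoParamP}) to count the elements of $\mathcal{L}$ disjoint from the fixed $k$-space $K$. First I would fix $K$ and recall that, by Theorem~\ref{Subsp}, for each $t$-space $\pi_i$ through $K$ the restriction $\mathcal{L}\cap[\pi_i]_k$ is a Cameron-Liebler $k$-set of $\pi_i\cong\pg(t,q)$; since $t\geq 2k+1$ this set has a well-defined parameter $x_{\pi_i}$, and because $K\subseteq\pi_i$ the value $\chi_\mathcal{L}(K)$ is the same computed in $\pi_i$ or in $\pg(n,q)$. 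The object I would double count is the set of pairs $(M,\pi_i)$ where $M\in\mathcal{L}$ is \emph{disjoint} from $K$, $\pi_i$ is a $t$-space containing $K$, and $M\subseteq\pi_i$. Insisting on disjointness is the crucial device: it forces $\langle K,M\rangle$ to be a $(2k+1)$-space for every admissible $M$, so that the number of $\pi_i$ completing a given pair is a single constant rather than a quantity depending on $\dim\langle K,M\rangle$.

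Counting first over the $t$-spaces: property (2) applied \emph{inside} $\pi_i$ says that the number of $M\in\mathcal{L}\cap[\pi_i]_k$ disjoint from $K$ equals $(x_{\pi_i}-\chi_\mathcal{L}(K))\gauss{t-k-1}{k}_q q^{k^2+k}$. Summing this over the $\gauss{n-k}{t-k}_q$ many $t$-spaces through $K$ (the number of $(t-k-1)$-spaces in the quotient $\pg(n-k-1,q)$) yields $\left(\sum_{K\in\pi_i}x_{\pi_i}-\gauss{n-k}{t-k}_q\chi_\mathcal{L}(K)\right)\gauss{t-k-1}{k}_q q^{k^2+k}$. Counting instead over $M$: globally, property (2) gives $(x-\chi_\mathcal{L}(K))\gauss{n-k-1}{k}_q q^{k^2+k}$ elements of $\mathcal{L}$ disjoint from $K$, and each such $M$ lies together with $K$ in exactly $\gauss{n-2k-1}{t-2k-1}_q$ of the $t$-spaces, this being the number of $t$-spaces through the $(2k+1)$-space $\langle K,M\rangle$. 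Equating the two expressions and cancelling the common factor $q^{k^2+k}$ isolates $\sum_{K\in\pi_i}x_{\pi_i}$ in terms of $x$ and $\chi_\mathcal{L}(K)$.

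The only genuine computation left is to simplify the resulting coefficient, and the claimed formula follows as soon as one verifies the Gaussian-binomial identity $\gauss{n-k-1}{k}_q\gauss{n-2k-1}{t-2k-1}_q=\gauss{t-k-1}{k}_q\gauss{n-k-1}{t-k-1}_q$. This is an instance of the standard flag identity $\gauss{a}{b}_q\gauss{a-b}{c}_q=\gauss{a}{b+c}_q\gauss{b+c}{b}_q$ taken with $a=n-k-1$, $b=k$, $c=t-2k-1$ (so that $b+c=t-k-1$), and I therefore do not expect it to be a real obstacle. The main point requiring care is purely bookkeeping: getting the two quotient-space counts right (the $\gauss{n-k}{t-k}_q$ $t$-spaces through $K$ and the $\gauss{n-2k-1}{t-2k-1}_q$ through $\langle K,M\rangle$) and checking that the hypotheses $2k+1\leq t\leq n-1$ and $n\geq 2k+2$ are exactly what guarantee all these Gaussian binomials have non-negative top-minus-bottom entries, so that the counts are legitimate.
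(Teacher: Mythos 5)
Your proposal is correct and follows essentially the same double-counting argument as the paper: count pairs $(M,\pi_i)$ with $M\in\mathcal{L}$ disjoint from $K$ and $K\subseteq\pi_i$, once via Theorem~\ref{Subsp} and Theorem~\ref{EquivalenceProj}(2) applied inside each $t$-space, and once via the global count together with the fact that two disjoint $k$-spaces lie in $\gauss{n-2k-1}{t-2k-1}_q$ $t$-spaces. The Gaussian-binomial identity you isolate is exactly the simplification the paper performs implicitly.
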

\begin{proof}
Fix a $k$-space $K$ and choose $t$ such that $2k+1 \leq t \leq n-1$. Let $R$ be the set of elements of $\mathcal{L}$ disjoint to $K$, 
and $S$ the set of $t$-spaces through $K$. Count the number $n$ of pairs $(x,y)$, $x \in R$ and $y \in S$, 
for which $x \subset y$. First choose a fixed $\pi \in S$. By Theorem \ref{Subsp}, 
$\mathcal{L}_{\pi} := \mathcal{L}\cap [\pi]_k$ is a Cameron-Liebler $k$-set in $\pi$. By 
Theorem \ref{EquivalenceProj} (2), $\mathcal{L}_{\pi}$ is characterized by the property that for every $k$-space $K \subset \pi$, 
the number of $k$-spaces of $\mathcal{L}_{\pi}$ skew to $K$ equals 
\[
(x_\pi-\chi_{\mathcal{L}_{\pi}}(K))q^{k^2+k}\begin{bmatrix}t-k-1 \\ k \end{bmatrix}_q\,.
\]
It is clear that $|S| = \gauss{n-k}{t-k}_q$, and since only $x_{\pi}$ depends on $\pi$, we find, using that $\chi_{\mathcal{L}_{\pi}}(K)=\chi_{\mathcal{L}}(K)$,
\begin{equation}\label{eq1}
\begin{split}
n &=\sum_{K \in \pi_i}\left(x_{\pi_i}-\chi_{\mathcal{L}_{\pi_i}}(K)\right)q^{k^2+k}\begin{bmatrix}t-k-1 \\ k \end{bmatrix}_q\\
&= \left(\sum_{K \in \pi_i}x_{\pi_i}-\begin{bmatrix}n-k \\ t-k \end{bmatrix}_q\chi_{\mathcal{L}}(K)\right)q^{k^2+k}\begin{bmatrix}t-k-1 \\ k \end{bmatrix}_q.
\end{split}
\end{equation}
Secondly, choose any $k$-space in $R$. Since two disjoint $k$-spaces of $\pg(n,q)$ are contained in exactly $\gauss{n-2k-1}{t-2k-1}_q$ $t$-spaces,
we conclude that
\begin{equation}\label{eq2}
|R| = \left(\sum_{K \in \pi_i}x_{\pi_i}-\gauss{n-k}{t-k}_q\chi_{\mathcal{L}}(K)\right)q^{k^2+k}\frac{\gauss{t-k-1}{k}_q}{\gauss{n-2k-1}{t-2k-1}_q}.
\end{equation}
By Theorem~\ref{EquivalenceProj}, (2), we now find 
\begin{equation}\left(\sum_{K \in \pi_i}x_{\pi_i}-\gauss{n-k}{t-k}_q\chi_{\mathcal{L}}(K)\right)\frac{\gauss{t-k-1}{k}_q}{\gauss{n-2k-1}{t-2k-1}_q}q^{k^2+k} = q^{k^2+k}(x-\chi_{\mathcal{L}}(K))
\gauss{n-k-1}{k}_q. 
\end{equation}
This equality can be simplified to 
\begin{equation}\sum_{K \in \pi_i}x_{\pi_i}-\gauss{n-k}{t-k}_q\chi_{\mathcal{L}}(K) = (x-\chi_{\mathcal{L}}(K))
\gauss{n-k-1}{t-k-1}_q\,,
\end{equation}
from which \eqref{Thex} now follows.
\end{proof}

\subsection{Cameron-Liebler line classes in $\ag(n,q)$}

Lemma~\ref{Aid} can be translated to Cameron-Liebler line classes in the affine space. The lack of a generalization of Theorem \ref{EquivalenceAff} 
seems to cause that this translation is only possible for line classes and not for Cameron-Liebler $k$-sets.

\begin{Le}\label{AidAff}
Suppose that $\mathcal{L}$ is a Cameron-Liebler line class in $\ag(n,q)$, with $n\geq 4$. Then for every $t$ such that $3\leq t\leq n-1$ and 
for an arbitrary line $\ell$ in $\ag(n,q)$, 
\[
x=\frac{q^{t-2}-1}{q^{n-2}-1}\frac{\sum_{\ell \in \pi_i} x_{\pi_i}}{\gauss{n-3}{t-3}_q}-\frac{q^{n-1}-1}{q^{t-1}-1}\chi_\mathcal{L}(\ell)+\chi_\mathcal{L}(\ell),
\]
where  $\chi_{\mathcal{L}}(\ell)$ is the value of the characteristic vector of $\mathcal{L}$ at position $\ell$, $x_{\pi_i}$ the parameter of the Cameron-Liebler 
line class induced in $\pi_i$, and  where the sum runs over all $t$-spaces through $\ell$.
\end{Le}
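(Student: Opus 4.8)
The plan is to mirror the double-counting argument of Lemma~\ref{Aid}, but with the projective characterization Theorem~\ref{EquivalenceProj}(2) replaced throughout by its affine counterpart Theorem~\ref{EquivalenceAff}. Fix a line $\ell$ in $\ag(n,q)$, let $R$ be the set of lines of $\mathcal{L}$ disjoint from $\ell$, and let $S$ be the set of $t$-spaces through $\ell$; note that $|S|=\gauss{n-1}{t-1}_q$, since an affine $t$-space through $\ell$ corresponds to a $t$-dimensional linear subspace of the direction space containing the direction of $\ell$. I would then count the number $N$ of incident pairs $(m,\pi)$ with $m\in R$, $\pi\in S$ and $m\subseteq\pi$ in two ways.

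First, fixing $\pi\in S$: by Theorem~\ref{th:affine-induced} the restriction $\mathcal{L}\cap[\pi]_1$ is a Cameron-Liebler line class in $\pi\cong\ag(t,q)$, say of parameter $x_\pi$, and applying the disjointness count of Theorem~\ref{EquivalenceAff} inside $\pi$ shows that the number of lines of $\mathcal{L}\cap[\pi]_1$ disjoint from $\ell$ equals $(q^2\frac{q^{t-2}-1}{q-1}+1)(x_\pi-\chi_\mathcal{L}(\ell))$. Summing over all $\pi\in S$ and using $\chi_{\mathcal{L}\cap[\pi]_1}(\ell)=\chi_\mathcal{L}(\ell)$ gives $N=(q^2\frac{q^{t-2}-1}{q-1}+1)\left(\sum_{\ell\in\pi_i}x_{\pi_i}-\gauss{n-1}{t-1}_q\chi_\mathcal{L}(\ell)\right)$.

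The second count is where the affine case genuinely departs from the projective one, and this is the main obstacle. A line $m\in R$ is disjoint from $\ell$ in one of two essentially different ways: either $m$ is parallel to $\ell$, so that $\langle\ell,m\rangle$ is an affine plane lying in $\gauss{n-2}{t-2}_q$ of the $t$-spaces of $S$; or $m$ is skew to $\ell$, so that $\langle\ell,m\rangle$ is an affine $3$-space lying in $\gauss{n-3}{t-3}_q$ of them. These two incidence numbers differ, so the second count splits as $N=\gauss{n-2}{t-2}_q\,p+\gauss{n-3}{t-3}_q\,s$, where $p$ and $s$ are the numbers of parallel and skew lines of $\mathcal{L}$ disjoint from $\ell$. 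The two ingredients of Theorem~\ref{EquivalenceAff} supply exactly these two quantities: the lines of $\mathcal{L}$ parallel to $\ell$ are the affine lines of $\mathcal{L}$ through the point at infinity $P_\infty$ of $\ell$, other than $\ell$ itself, so the point-at-infinity condition gives $p=x-\chi_\mathcal{L}(\ell)$, while the disjointness condition gives the total $p+s=(q^2\frac{q^{n-2}-1}{q-1}+1)(x-\chi_\mathcal{L}(\ell))$, whence $s=q^2\frac{q^{n-2}-1}{q-1}(x-\chi_\mathcal{L}(\ell))$. It is precisely the need for this second, point-at-infinity, ingredient that explains why the argument is limited to line classes, the case in which Theorem~\ref{EquivalenceAff} is available.

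Equating the two expressions for $N$, the common factor $q^2\frac{q^{t-2}-1}{q-1}+1=q^{t-1}+q^{t-2}+\cdots+q^2+1$ cancels after rewriting $\gauss{n-2}{t-2}_q=\gauss{n-3}{t-3}_q\frac{q^{n-2}-1}{q^{t-2}-1}$, leaving $\sum_{\ell\in\pi_i}x_{\pi_i}-\gauss{n-1}{t-1}_q\chi_\mathcal{L}(\ell)=(x-\chi_\mathcal{L}(\ell))\gauss{n-3}{t-3}_q\frac{q^{n-2}-1}{q^{t-2}-1}$. Solving for $x$ and simplifying the coefficient of $\chi_\mathcal{L}(\ell)$ by means of the identity $\frac{\gauss{n-1}{t-1}_q}{\gauss{n-3}{t-3}_q}=\frac{(q^{n-1}-1)(q^{n-2}-1)}{(q^{t-1}-1)(q^{t-2}-1)}$ yields exactly the stated formula. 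The remaining work is routine manipulation of Gaussian binomial coefficients.
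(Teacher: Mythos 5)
Your proposal is correct and follows essentially the same double-counting argument as the paper: both count incident pairs between lines of $\mathcal{L}$ disjoint from $\ell$ and the $t$-spaces through $\ell$, using Theorem~\ref{th:affine-induced} together with both ingredients of Theorem~\ref{EquivalenceAff}. The only difference is bookkeeping: the paper restricts the set $R$ to the projectively skew lines from the outset (so every element of $R$ lies in exactly $\gauss{n-3}{t-3}_q$ members of $S$), whereas you keep the parallel lines in $R$ and split the second count into the parallel and skew contributions with coefficients $\gauss{n-2}{t-2}_q$ and $\gauss{n-3}{t-3}_q$ --- the two computations agree and yield the same formula.
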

\begin{proof}
Fix a line $\ell$ in $\ag(n,q)$ and choose $t$ such that $3\leq t\leq n-1$. We consider the projective completion of $\ag(n,q)$. 
Let $R$ be the set of projective lines of $\mathcal{L}$ (i.e. the projective completion of the lines in $\mathcal{L}$) disjoint to $\ell$, and $S$
the set of $t$-spaces through $\ell$. Count the number $n$ of pairs $(x,y)$, $x \in R$ and $y \in S$, for which $x \subset y$. First choose a fixed 
$\pi \in S$. By Theorem~\ref{th:affine-induced}, $\mathcal{L}_{\pi} := \mathcal{L}\cap [\pi]_q$ is a Cameron-Liebler line class in $\pi$. 
By Theorem~\ref{EquivalenceAff} (2), $\mathcal{L}_{\pi}$ is characterized by the property that for every affine line $\ell \subset \pi$, the number of lines 
of $\mathcal{L}_{\pi}$ disjoint from $\ell$ equals 
\[
(q^2\frac{q^{t-2}-1}{q-1}+1)(x_{\pi}-\chi_\mathcal{L}(\ell))\,,
\]
and where precisely $x_{\pi}-\chi_\mathcal{L}(\ell)$ of these lines are parallel with $\ell$. It is clear that $|S| = \gauss{n-1}{t-1}_q$,
and since only $x_{\pi}$ depends on $\pi$, we find, using that $\chi_\mathcal{L_{\pi}}(\ell) = \chi_\mathcal{L}(\ell)$,
\begin{equation}
\begin{split}
n &= \sum_{\ell\in \pi_i} q^2\frac{q^{t-2}-1}{q-1}(x_{\pi_i}-\chi_{\mathcal{L}_{\pi_i}}(\ell))\\
&=q^2\frac{q^{t-2}-1}{q-1}\left( \sum_{\ell\in \pi_i}x_{\pi_i}-\gauss{n-1}{t-1}_q \chi_\mathcal{L}(\ell) \right)\,.
\end{split}
\end{equation}

Secondly, choose any line in $R$. Since two disjoint lines in $\pg(n,q)$ are contained in exactly $\gauss{n-3}{t-3}_q$ $t$-spaces, we conclude that
\begin{equation*}
\begin{split}
|R|&=\frac{q^2}{\gauss{n-3}{t-3}_q}\frac{q^{t-2}-1}{q-1}\left( \sum_{\ell\in \pi_i}x_{\pi_i}-\gauss{n-1}{t-1}_q \chi_\mathcal{L}(\ell) \right).
\end{split}
\end{equation*}
By Theorem~\ref{EquivalenceAff} (2), we now find that 
\[
\frac{q^2}{\gauss{n-3}{t-3}_q}\frac{q^{t-2}-1}{q-1}\left( \sum_{\ell\in \pi_i}x_{\pi_i}-\gauss{n-1}{t-1}_q \chi_\mathcal{L}(\ell) \right) = q^2\frac{q^{n-2}-1}{q-1}(x-\chi_\mathcal{L}(\ell))\,.
\]
This equality can be simplified to 
\[
x=\frac{q^{t-2}-1}{q^{n-2}-1}\frac{\sum_{\ell \in \pi_i} x_{\pi_i}}{\gauss{n-3}{t-3}_q}-\frac{q^{n-1}-1}{q^{t-1}-1}\chi_\mathcal{L}(\ell)+\chi_\mathcal{L}(\ell)\,,
\]
which is the statement of the lemma.
\end{proof}

\section{Elaborations obtained from the counting argument}

In this section, we discuss some  results that can be shown using the results from the previous sections. 

\subsection{Non-integer parameters}
We have mentioned that, unlike $\ag(n,q)$, in $\pg(n,q)$ there exist Cameron-Liebler $k$-sets with a parameter that is not an integer. An example of such a class is given in Example~\ref{ex:trivial_projective}. The parameter of a Cameron-Liebler $k$-set in $\pg(n,q)$ can only be a non-integer if and only if $(k+1)\nmid (n+1)$, since, if $(k+1)\mid (n+1)$, then $k$-spreads in $\pg(n,q)$ exist, 
and due to Theorem \ref{EquivalenceProj} (4) this requires that $x$ is an integer. \\

If $(k+1)\nmid (n+1)$, the number of possible parameters increases greatly. Our goal is to limit this increase significantly. This is done in  the following theorem.


\begin{St}\label{non-integer}
Suppose that $\mathcal{L}$ is a non-empty Cameron-Liebler $k$-set in $\pg(n,q)$ of parameter $x$. 
Then we have that for every  $t$, such that $(k+1)\mid (t+1)$ and $2k+1 \leq t\leq n-1$, that
$$x=1+\frac{C}{\gauss{n-k-1}{t-k-1}_q},$$
for some $C \in \mathbb{N}$.
\end{St}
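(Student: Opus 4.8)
The plan is to apply Lemma~\ref{Aid} at a well-chosen $k$-space $K$ and to use the divisibility hypothesis to force every quantity in the numerator to be an integer. First I would observe that the range $2k+1\leq t\leq n-1$ already forces $n\geq 2k+2$, so Lemma~\ref{Aid} is available. Rewriting its conclusion as
$$\left(x-\chi_\mathcal{L}(K)\right)\gauss{n-k-1}{t-k-1}_q=\sum_{K\in\pi_i}x_{\pi_i}-\gauss{n-k}{t-k}_q\chi_\mathcal{L}(K),$$
the whole argument reduces to controlling the right-hand side, where the sum runs over all $t$-spaces $\pi_i$ through $K$.

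The key step is the hypothesis $(k+1)\mid(t+1)$. Each $t$-space $\pi_i$ through $K$ is a copy of $\pg(t,q)$, and $\pg(t,q)$ has $k$-spreads precisely when $(k+1)\mid(t+1)$. By Theorem~\ref{Subsp}, the induced set $\mathcal{L}\cap[\pi_i]_k$ is a Cameron-Liebler $k$-set in $\pi_i$ of parameter $x_{\pi_i}$, and since $t\geq 2k+1$, Theorem~\ref{EquivalenceProj}~(4) applies and forces each $x_{\pi_i}$ to be an integer. (When the induced set is empty, $x_{\pi_i}=0$, which is still an integer.) Hence $\sum_{K\in\pi_i}x_{\pi_i}$ is an integer, and since $\gauss{n-k}{t-k}_q$ is an integer, the entire right-hand side is an integer.

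Now I would choose $K\in\mathcal{L}$, which is possible because $\mathcal{L}$ is non-empty; then $\chi_\mathcal{L}(K)=1$ and the displayed identity becomes
$$x=1+\frac{C}{\gauss{n-k-1}{t-k-1}_q},\qquad C:=\sum_{K\in\pi_i}x_{\pi_i}-\gauss{n-k}{t-k}_q\in\mathbb{Z}.$$
It then remains to check that $C\geq 0$. Since $\mathcal{L}$ is non-empty we have $x>0$, and by Theorem~\ref{Non-existence01} there are no Cameron-Liebler $k$-sets of parameter in $]0,1[$, so $x\geq 1$. As $\gauss{n-k-1}{t-k-1}_q>0$, this gives $C=(x-1)\gauss{n-k-1}{t-k-1}_q\geq 0$, so $C\in\mathbb{N}$, which completes the proof.

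I expect the only genuinely delicate point to be the non-negativity of $C$: the integrality of the numerator falls out immediately from the spread/divisibility mechanism, but to land exactly on the form $1+C/\gauss{n-k-1}{t-k-1}_q$ with $C$ a \emph{natural} number one must invoke both the non-emptiness of $\mathcal{L}$ and the gap result of Theorem~\ref{Non-existence01} to rule out $0<x<1$. I would also remark that $C$ is in fact independent of the chosen $K\in\mathcal{L}$, since $C=(x-1)\gauss{n-k-1}{t-k-1}_q$ depends only on $x$ and $t$.
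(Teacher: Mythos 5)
Your proof is correct and follows essentially the same route as the paper: apply Lemma~\ref{Aid} at a $k$-space $K\in\mathcal{L}$ and use $(k+1)\mid(t+1)$ together with the existence of $k$-spreads (Theorem~\ref{EquivalenceProj}(4)) to force every $x_{\pi_i}$ to be an integer. The only divergence is how $C\geq 0$ is obtained: the paper notes that each induced set $\mathcal{L}\cap[\pi_i]_k$ contains $K$ and hence has $x_{\pi_i}\geq 1$, giving $\sum_{K\in\pi_i}x_{\pi_i}\geq\gauss{n-k}{t-k}_q$ directly (which is why the paper can afterwards remark that this method independently re-proves the first part of Theorem~\ref{Non-existence01}), whereas you import Theorem~\ref{Non-existence01} to get $x\geq 1$ --- equally valid, but slightly less self-contained.
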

\begin{proof}
Recall Lemma \ref{Aid}, by which for an arbitrary $k$-space $K \in \mathcal{L}$, it holds that
\begin{equation}\label{HelpLater}x=\chi_\mathcal{L}(K)+\frac{\left(\sum_{K \in \pi_i}x_{\pi_i}-  \gauss{n-k}{t-k}_q\chi_\mathcal{L}(K) \right)} {\gauss{n-k-1}{t-k-1}_q }.\end{equation}
Hence, if we choose $K \in \mathcal{L}$, we obtain that $x=1+\frac{\left(\sum_{K \in \pi_i}x_{\pi_i}-  \gauss{n-k}{t-k}_q \right)} {\gauss{n-k-1}{t-k-1}_q }.$
Remark that due to the fact that $(k+1)\mid (t+1)$ and $K \in \mathcal{L}$, we know that every $x_{\pi_i}\in \mathbb{N}\setminus \{0\}$. Hence, $\sum_{K \in \pi_i}x_{\pi_i}\geq \gauss{n-k}{t-k}_q$ and thus $\sum_{K \in \pi_i}x_{\pi_i}-  \gauss{ n-k}{t-k}_q \in \mathbb{N}$. 

It follows that $$x=1+\frac{C}{\gauss{n-k-1}{t-k-1}_q},$$
for $C \in \mathbb{N}$. 
\end{proof}
This theorem proves how the decimal part of a parameter $x$ can look like. It means that due to the fact that $C\in \mathbb{N}$, there are only a finite number of decimal parts allowed. Hence, for $C\in \{1,\ldots ,\gauss{n-k-1}{t-k-1}_q\}$, to get this number of decimal parts as small as possible, one can choose $t=2k+1$, if $n\geq 2k+1$.
\begin{Opm}
Let us remark that the method of Theorem \ref{non-integer} also proves the first part of Theorem \ref{Non-existence01}, which is a non-existence result for Cameron-Liebler $k$-sets of parameter $x \in]0,1[$, with $n \geq 2k+2$. 
\end{Opm}

\begin{Opm}
Note that similar theorems for $\ag(n,q)$ are not interesting since every parameter $x$ is an integer because $k$-spreads exist in every affine space $\ag(n,q)$.
\end{Opm}

\subsection{Upper bound on the number of parameters in $\pg(n,q)$}
In the previous subsection, we have proven that the number of possible decimal parts of a parameter $x\not\in \mathbb{N}$ is finite. This allows us to determine an upper bound on the possible parameters $x$. This will be done using Lemma \ref{Non-existence01}, since this lemma shows that $$0\leq x\leq \frac{q^{n+1}-1}{q^{k+1}-1}.$$
If $(k+1) \mid (n+1)$ and $x$ is an integer, the number of possible parameters is in general all integers in this interval, thus we have an upper bound  equal to $\frac{q^{n+1}-1}{q^{k+1}-1}+1$. But when non-integers are allowed, this method can not be used, and at first sight there are an infinite number of possible parameters. Here we use the previous section to slim down this set to a finite set. We will be using Theorem \ref{non-integer} to estimate the maximal number of possible parameters $x$ in these cases.
\begin{Gev}\label{Gev1}
The number of possibilities for the parameter $x$ of a Cameron-Liebler $k$-set in $\pg(n,q)$, with $n \geq 2k+2$, equals $q^{k+1}\frac{q^{n-k}-1}{q^{k+1}-1} \gauss{n-k-1}{k}_q+2$.
\end{Gev}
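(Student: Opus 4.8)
The plan is to apply Theorem~\ref{non-integer} in its sharpest form and then count the admissible values directly. First I would specialize Theorem~\ref{non-integer} to the choice $t=2k+1$ suggested in the remark following it. This choice is legitimate: the divisibility condition $(k+1)\mid(t+1)$ becomes $(k+1)\mid(2k+2)$, which always holds, and the range condition $2k+1\leq t\leq n-1$ reduces to $2k+1\leq n-1$, i.e.\ exactly the hypothesis $n\geq 2k+2$. With this choice the denominator $\gauss{n-k-1}{t-k-1}_q$ becomes $\gauss{n-k-1}{k}_q$, so every \emph{non-empty} Cameron-Liebler $k$-set has parameter of the form
$$x = 1 + \frac{C}{\gauss{n-k-1}{k}_q}, \qquad C \in \mathbb{N},\ C\geq 0.$$

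Next I would bound $C$ using Lemma~\ref{Prop}, which gives $x \leq \frac{q^{n+1}-1}{q^{k+1}-1}$ (the lower bound $x\geq 0$ is automatic since $C\geq 0$). Solving for $C$ yields
$$C \leq \gauss{n-k-1}{k}_q\left(\frac{q^{n+1}-1}{q^{k+1}-1}-1\right) = \gauss{n-k-1}{k}_q\cdot\frac{q^{k+1}(q^{n-k}-1)}{q^{k+1}-1},$$
so the admissible values of $C$ range over $\{0,1,\dots,C_{\max}\}$, contributing $C_{\max}+1$ distinct parameters from non-empty sets.

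The key step, and the one I expect to be the main obstacle, is verifying that $C_{\max}$ is an integer, which is what makes the stated count meaningful. I would establish this through the Gaussian binomial identity
$$\frac{q^{n-k}-1}{q^{k+1}-1}\gauss{n-k-1}{k}_q = \gauss{n-k}{k+1}_q,$$
obtained by comparing the factorizations of the two sides. This rewrites the bound as $C_{\max} = q^{k+1}\gauss{n-k}{k+1}_q \in \mathbb{N}$. As a consistency check, substituting $C=C_{\max}$ returns exactly $x = \frac{q^{n+1}-1}{q^{k+1}-1}$, the parameter of the complement of the empty set, confirming the endpoint is correctly accounted for.

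Finally I would assemble the count. The non-empty sets contribute the $C_{\max}+1$ values $1+C/\gauss{n-k-1}{k}_q$, each of which is at least $1$; the empty set contributes the value $x=0$, which is distinct from all of them. Hence the total number of possible parameters is $C_{\max}+2 = q^{k+1}\frac{q^{n-k}-1}{q^{k+1}-1}\gauss{n-k-1}{k}_q+2$, as claimed. Once the Gaussian-binomial identity is in place the remainder is routine bookkeeping, keeping in mind that the count records the values not excluded by Theorem~\ref{non-integer} and Lemma~\ref{Prop}, rather than asserting each is actually realized.
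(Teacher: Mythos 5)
Your proposal is correct and follows essentially the same route as the paper: apply Theorem~\ref{non-integer} with $t=2k+1$ and bound the resulting integer $C$ via Lemma~\ref{Prop}. The only cosmetic difference is the bookkeeping of the case split --- the paper separates $x\in[0,1]$ (giving the two values $0,1$ by Theorem~\ref{Non-existence01}) from $x>1$ (giving $C\in\{1,\dots,C_{\max}\}$), whereas you separate the empty set from the non-empty ones (giving $C\in\{0,\dots,C_{\max}\}$); both tallies agree, and your explicit check that $C_{\max}=q^{k+1}\gauss{n-k}{k+1}_q$ is an integer is a small addition the paper leaves implicit.
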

\begin{proof}
Assume that $x\in [0,1]$. By Lemma \ref{Non-existence01}, $x=1$ or $x=0$. This makes two possibilities for $x$.

Now assume that $x>1$. By Lemma \ref{Prop}, $1<x\leq \frac{q^{n+1}-1}{q^{k+1}-1}$. Now we use Theorem \ref{non-integer} for every $t=2k+1$, thus we have that
$$x=1+\frac{C}{\gauss{n-k-1}{k}_q},$$
for $C \in \mathbb{N}$. Now we count the maximum number of possibilities for this integer $C$, this is, 
we count the number of $C\in \mathbb{N}$, such that
$$0< C \leq \left(\frac{q^{n+1}-1}{q^{k+1}-1}-1\right)\gauss{n-k-1}{k}_q.$$
This number equals $q^{k+1}\frac{q^{n-k}-1}{q^{k+1}-1} \gauss{n-k-1}{k}_q$. Together with the 2 possibilities for $x$ in the first case, we find the desired upper bound.
\end{proof}

\subsection{Gluing Cameron-Liebler sets together}
We already have seen that the intersection with a subspace and a Cameron-Liebler $k$-set in $\pg(n,q)$ or $\ag(n,q)$ defines a Cameron-Liebler $k$-set in that subspace. This subsection concerns the converse question. Can we glue Cameron-Liebler $k$-sets in subspaces together to obtain a Cameron-Liebler $k$-set in $\pg(n,q)$ or $\ag(n,q)$. The answer to that question is not immediately clear. We have found the following result.
\begin{St}
Let  $\mathcal{L}$ be a non-empty set of $k$-spaces in $\pg(n,q)$, with $n\geq 2k+2$. Let $2k+1\leq t\leq n-1$ and suppose  that $\mathcal{L}$ restricted to every $t$-dimensional subspace $\pi_i$ defines a Cameron-Liebler $k$-set of parameter $x_{\pi_i}$. Then the following properties are equivalent.
\begin{enumerate}
\item The set $\mathcal{L}$ is a Cameron-Liebler $k$-set of parameter $C$ in $\pg(n,q)$.
\item For every $k$-space $K$, the number $C:=\chi_\mathcal{L}(K)+\frac{\left(\sum_{K \in \pi_i}x_{\pi_i}-  \gauss{n-k}{t-k}_q\chi_\mathcal{L}(K) \right)} {\gauss{n-k-1}{t-k-1}_q }$ is a constant independent from $K$.
\end{enumerate}
The sum of the parameters $x_{\pi_i}$ runs over all $t$-spaces through  $K$.
\end{St}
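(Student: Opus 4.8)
The plan is to prove the two implications separately, keeping in mind that \emph{both} directions rely on the standing hypothesis of the theorem, namely that $\mathcal{L}$ restricted to every $t$-space $\pi_i$ is already a Cameron-Liebler $k$-set of some parameter $x_{\pi_i}$. The implication (1)$\Rightarrow$(2) is then essentially immediate: if $\mathcal{L}$ is a Cameron-Liebler $k$-set of parameter $C$, then, since $n\geq 2k+2$ and $2k+1\leq t\leq n-1$, Lemma~\ref{Aid} asserts precisely that the displayed expression equals the parameter of $\mathcal{L}$ for \emph{every} $k$-space $K$. Hence $C$ is constant and equal to that parameter. So the only substantial work is the converse.

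For (2)$\Rightarrow$(1), the idea is to run the counting argument of Lemma~\ref{Aid} in reverse and then invoke Corollary~\ref{EquivalencenoParamP}, which characterizes a Cameron-Liebler $k$-set of parameter $C$ purely by the number of its elements disjoint from each $k$-space, \emph{without} presupposing that $\mathcal{L}$ is Cameron-Liebler. First I would fix a $k$-space $K$, let $R$ be the set of elements of $\mathcal{L}$ disjoint from $K$, and double-count the pairs $(Y,\pi)$ with $Y\in R$, $\pi$ a $t$-space through $K$, and $Y\subset\pi$. Fixing $\pi$ first and using the hypothesis that $\mathcal{L}_\pi:=\mathcal{L}\cap[\pi]_k$ is Cameron-Liebler of parameter $x_{\pi}$, Theorem~\ref{EquivalenceProj}~(2) (applied inside $\pi$, where $\dim\pi=t\geq 2k+1$) counts $(x_{\pi}-\chi_\mathcal{L}(K))q^{k^2+k}\gauss{t-k-1}{k}_q$ elements of $\mathcal{L}_\pi$ disjoint from $K$ inside $\pi$; summing over the $\gauss{n-k}{t-k}_q$ such $t$-spaces, and fixing $Y$ first (each $Y\in R$ spans with $K$ a $(2k+1)$-space lying in exactly $\gauss{n-2k-1}{t-2k-1}_q$ of the $t$-spaces through $K$), I obtain
\[
|R|\,\gauss{n-2k-1}{t-2k-1}_q=\left(\sum_{K\in\pi_i}x_{\pi_i}-\gauss{n-k}{t-k}_q\chi_\mathcal{L}(K)\right)q^{k^2+k}\gauss{t-k-1}{k}_q\,.
\]

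Next I would substitute hypothesis (2), rewritten as $\sum_{K\in\pi_i}x_{\pi_i}-\gauss{n-k}{t-k}_q\chi_\mathcal{L}(K)=(C-\chi_\mathcal{L}(K))\gauss{n-k-1}{t-k-1}_q$, and apply the Gaussian binomial identity $\gauss{n-k-1}{t-k-1}_q\gauss{t-k-1}{k}_q=\gauss{n-k-1}{k}_q\gauss{n-2k-1}{t-2k-1}_q$, which is the special case $m=n-k-1$, $a=t-k-1$, $b=k$ of $\gauss{m}{a}_q\gauss{a}{b}_q=\gauss{m}{b}_q\gauss{m-b}{a-b}_q$. After cancelling $\gauss{n-2k-1}{t-2k-1}_q$ on both sides, this collapses the count to $|R|=(C-\chi_\mathcal{L}(K))q^{k^2+k}\gauss{n-k-1}{k}_q$ for every $k$-space $K$, which is exactly condition (2) of Corollary~\ref{EquivalencenoParamP}. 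Therefore $\mathcal{L}$ is a Cameron-Liebler $k$-set of parameter $C$, giving (1).

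The hard part is conceptual rather than computational: one must verify Corollary~\ref{EquivalencenoParamP} \emph{without} circularity, since $\mathcal{L}$ itself is only assumed to be Cameron-Liebler on each $\pi_i$. The point is that the per-$\pi_i$ disjointness count is legitimate precisely because of the standing hypothesis, and the global disjointness count is then assembled from these local counts by double counting alone, so Corollary~\ref{EquivalencenoParamP} can be checked directly. The only genuine computation is the Gaussian binomial identity above, which is exactly the one already used implicitly in Lemma~\ref{Aid}, so no new identity is required; I would simply make it explicit here.
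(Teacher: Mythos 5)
Your proposal is correct and follows essentially the same route as the paper: (1)$\Rightarrow$(2) via Lemma~\ref{Aid}, and (2)$\Rightarrow$(1) by double-counting the elements of $\mathcal{L}$ disjoint from $K$ through the $t$-spaces containing $K$, substituting the constant $C$, simplifying with the Gaussian binomial identity, and invoking Corollary~\ref{EquivalencenoParamP}. The only difference is that you make explicit the double count and the non-circularity of appealing to Corollary~\ref{EquivalencenoParamP}, which the paper leaves implicit by referring back to the computation in Lemma~\ref{Aid}.
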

\begin{proof}
Lemma \ref{Aid} proves that (1) implies (2). 

Now suppose that (2) holds. Choose an arbitrary $k$-space and calculate, similarly as in the proof of Lemma \ref{Aid}, 
the number of $k$-spaces of $\mathcal{L}$ skew to $K$. This number equals
$$|R|=\left(\sum_{K \in \pi_i}x_{\pi_i}-\gauss{n-k}{t-k}_q\chi_{\mathcal{L}}(K)\right)\frac{\gauss{t-k-1}{k}_q}{\gauss{n-2k-1}{t-2k-1}_q}q^{k^2+k}.$$
Note that if we fill in the constant $C$ from Property (2), we obtain the number
$$\left(C-\chi_{\mathcal{L}}(K)\right)\frac{\gauss{n-k-1}{t-k-1}_q\gauss{t-k-1}{k}_q}{\gauss{n-2k-1}{t-2k-1}_q}q^{k^2+k}.$$
This number simplifies to $\left( C-\chi_\mathcal{L}(K)\right) \gauss{n-k-1}{k}_q q^{k^2+k}$, which is the number from Corollary \ref{EquivalencenoParamP}, Property (2). So $\mathcal{L}$ is a Cameron-Liebler $k$-set of parameter $C$. 
\end{proof}

We can prove the following theorem  similar to the previous theorem. Its proof now relies on Lemma \ref{AidAff} instead of Lemma \ref{Aid}.
\begin{St}
Let  $\mathcal{L}$ be a non-empty set of lines in $\ag(n,q)$, with $n\geq 4$. Let $3 \leq t\leq n-1$ and 
suppose  that $\mathcal{L}$ restricted to every $t$-dimensional subspace $\pi_i$ defines a 
Cameron-Liebler line class of parameter $x_{\pi_i}$. Then the following properties are equivalent.
\begin{enumerate}
\item The set $\mathcal{L}$ is a Cameron-Liebler line class in $\ag(n,q)$ of parameter $C$.
\item For every line $\ell$, the number 
$C:=\frac{q^{t-2}-1}{q^{n-2}-1}\frac{\sum_{\ell \in \pi_i} x_{\pi_i}}{\gauss{n-3}{t-3}_q}-\frac{q^{n-1}-1}{q^{t-1}-1}\chi_\mathcal{L}(\ell)+\chi_\mathcal{L}(\ell)$, is a constant {\bf integer} independent from $\ell$. Also it holds that through every point at infinity we have exactly $C$ lines of $\cL$.
\end{enumerate}
\end{St}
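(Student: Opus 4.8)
The plan is to mirror the proof of the preceding projective gluing theorem verbatim in structure, simply swapping Lemma~\ref{Aid} for Lemma~\ref{AidAff} and the projective equivalence (Corollary~\ref{EquivalencenoParamP}) for the affine equivalence Theorem~\ref{EquivalenceAff}. I would prove the two implications separately, and the only genuinely new feature compared to the projective case is that Theorem~\ref{EquivalenceAff} packages \emph{two} conditions (a disjointness count \emph{and} a point-at-infinity count), so both halves of statement~(2) will be needed and both must be produced in the reverse direction.

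For the implication $(1)\Rightarrow(2)$, I would argue that it is immediate. If $\mathcal{L}$ is a Cameron-Liebler line class of parameter $C$ in $\ag(n,q)$, then Lemma~\ref{AidAff} asserts precisely that the displayed expression equals $x=C$ for every line $\ell$, so it is a constant, and it is an integer since parameters in $\ag(n,q)$ are always integers (the Remark following Definition~\ref{definition_2.12}). The additional requirement that through every point at infinity there are exactly $C$ affine lines of $\mathcal{L}$ is nothing but the second half of the characterization in Theorem~\ref{EquivalenceAff}~(2), so it holds automatically.

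For the implication $(2)\Rightarrow(1)$, I would verify the two conditions of Theorem~\ref{EquivalenceAff}~(2) for $\mathcal{L}$ with parameter $C$. The point-at-infinity condition is given outright in~(2). For the disjointness count I would re-run the \emph{first} counting step from the proof of Lemma~\ref{AidAff}: passing to the projective closure and counting incident pairs (projective line of $\mathcal{L}$ disjoint from $\bar\ell$, $t$-space through $\ell$). This step uses only the hypothesis that $\mathcal{L}$ restricts to a Cameron-Liebler line class in each $t$-space, so it yields
\[
|R| = \frac{q^2}{\gauss{n-3}{t-3}_q}\frac{q^{t-2}-1}{q-1}\left(\sum_{\ell\in\pi_i} x_{\pi_i}-\gauss{n-1}{t-1}_q\,\chi_\mathcal{L}(\ell)\right),
\]
where $R$ is the set of projective lines of $\mathcal{L}$ disjoint from $\bar\ell$. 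I would then substitute the constant $C$ from~(2); the key algebraic step is the Gaussian-binomial identity
\[
\frac{(q^{t-2}-1)\,\gauss{n-1}{t-1}_q}{\gauss{n-3}{t-3}_q}=\frac{(q^{n-1}-1)(q^{n-2}-1)}{q^{t-1}-1},
\]
which collapses the expression for $|R|$ exactly to $q^2\frac{q^{n-2}-1}{q-1}\bigl(C-\chi_\mathcal{L}(\ell)\bigr)$.

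The step I expect to be the main obstacle is the bookkeeping translating projective disjointness back to affine disjointness. Theorem~\ref{EquivalenceAff} counts \emph{affine} lines disjoint from $\ell$ (which includes the lines parallel to $\ell$), whereas the count above produces only \emph{projectively} disjoint lines (which excludes parallels). An affine line of $\mathcal{L}$ is affine-disjoint from $\ell$ exactly when it lies in $R$ or is parallel to $\ell$, and the parallel ones are precisely the affine lines of $\mathcal{L}$ through the point at infinity of $\ell$ other than $\ell$ itself, numbering $C-\chi_\mathcal{L}(\ell)$ by the point-at-infinity hypothesis. Adding these to $|R|$ gives the total $\bigl(q^2\frac{q^{n-2}-1}{q-1}+1\bigr)\bigl(C-\chi_\mathcal{L}(\ell)\bigr)$, and the isolated ``$+1$'' in this factor is exactly where condition~(b) is genuinely used. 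With both conditions of Theorem~\ref{EquivalenceAff}~(2) verified, $\mathcal{L}$ is a Cameron-Liebler line class of parameter $C$, completing the equivalence.
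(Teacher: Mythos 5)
Your proposal is correct and follows essentially the same route as the paper: both directions are handled by Lemma~\ref{AidAff} together with the two-part characterization in Theorem~\ref{EquivalenceAff}~(2), and the reverse direction re-runs the disjointness count, substitutes $C$, and then adds the $C-\chi_\mathcal{L}(\ell)$ parallel lines supplied by the point-at-infinity hypothesis. Your explicit bookkeeping of projective versus affine disjointness is exactly the step the paper compresses into its final sentence, so nothing further is needed.
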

\begin{proof}
From (1) to (2) follows in fact from Lemma \ref{AidAff}.

Now suppose that (2) holds. Choose an arbitrary line and calculate, as in the proof of Lemma~\ref{AidAff}, the number 
of lines of $\cL$ that are disjoint to a fixed line $\ell$ in the projective closure. This number equals
$$|R|=\frac{q^2}{\gauss{n-3}{t-3}_q}\frac{q^{t-2}-1}{q-1}\left( \sum_{\ell\in \pi_i}x_{\pi_i}-\gauss{n-1}{t-1}_q \chi_\mathcal{L}(\ell) \right).$$
Filling in our integer parameter, we obtain that this number equals
$$|R|=q^2\frac{q^{n-2}-1}{q-1}(C-\chi_\cL(\ell)).$$
Adding the $C-\chi_\cL(\ell)$ lines of $\cL$ through the point at infinity on $\ell$, we have the number in Theorem \ref{EquivalenceAff}. This proves the theorem.
\end{proof}

\section{Non-existence results}
In this section, we will prove some non-existence conditions on the parameters of Cameron-Liebler sets. We will use a similar strategy as
in \cite{DrudgeThesis}, and consider the Cameron-Liebler set induced in subspaces.
\begin{Opm}\cite[Lemma 3.1]{Jozefien}
Since the complement of a Cameron-Liebler $k$-set of parameter $x$ in $\pg(n,q)$ (or $\ag(n,q)$) is also a Cameron-Liebler $k$-set but of parameter $\frac{q^{n+1}-1}{q^{k+1}-1}-x$ (or $q^{n-k}-x$ respectively), we are only interested in those Cameron-Liebler $k$-sets with parameter at most $\frac12\frac{q^{n+1}-1}{q^{k+1}-1}$ in $\pg(n,q)$ or at most $\frac12q^{n-k}$ in $\ag(n,q)$.
\end{Opm}

\subsection{Cameron-Liebler $k$-sets in $\pg(n,q)$}

The main objective will be to prove the following theorem.

\begin{St}\label{NonEx}
Suppose that $n\geq 3k+3$ and $k\geq 1$. Let $\mathcal{L}$ be a Cameron-Liebler $k$-set of parameter $x$ in $\pg(n,q)$ such that $\mathcal{L}$ is not a point-pencil, nor the empty set. Then it holds that
$$x\geq\frac{q^{n-k}-1}{q^{2k+2}-1}+1.$$
\end{St}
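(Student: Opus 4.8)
The plan is to exploit the machinery built up earlier, especially Lemma~\ref{Aid} (the counting relation for the parameter in terms of induced parameters in $t$-spaces) together with the structural result Theorem~\ref{DrudgeGeneralPG}. The natural choice is $t=2k+1$, the smallest admissible value, because then $(k+1)\mid(t+1)$, so induced Cameron-Liebler $k$-sets in a $t$-space have integer parameter, and moreover by Theorem~\ref{Uniquenessx=1} and Theorem~\ref{Non-existence01} these induced parameters are heavily constrained near the small end. The hypothesis $n\ge 3k+3$ guarantees $2k+1\le t\le n-1$ so the lemma applies. First I would fix a $k$-space $K\in\mathcal{L}$ and write Lemma~\ref{Aid} in the form $x-1=\frac{\sum_{K\in\pi_i}(x_{\pi_i}-1)}{\gauss{n-k-1}{k}_q}$, using that the number of $t$-spaces through $K$ is $\gauss{n-k}{t-k}_q=\gauss{n-k}{k+1}_q$ and that subtracting $1$ from each summand accounts for the $\chi_\mathcal{L}(K)=1$ terms.

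The key point is then to produce at least one $t$-space $\pi_i$ through $K$ whose induced parameter $x_{\pi_i}$ is at least $2$, while the remaining induced parameters are all at least $1$ (since each contains $K\in\mathcal{L}$, hence is a nonempty Cameron-Liebler $k$-set, so has parameter $\ge 1$ by Theorem~\ref{Non-existence01} combined with the spread definition). If I can show that not every induced $\mathcal{L}\cap[\pi_i]_k$ can be a point-pencil, then by Theorem~\ref{DrudgeGeneralPG} that forces $x_{\pi_i}\ge 2$ for at least one $\pi_i$: indeed, if some induced set were a point-pencil (parameter $1$, a set of $k$-spaces through a point of $\pi_i$) then Theorem~\ref{DrudgeGeneralPG} would force $\mathcal{L}$ itself to be a point-pencil, contradicting the hypothesis. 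So every $x_{\pi_i}\ge 2$, and in fact I only need the weaker statement that each $x_{\pi_i}\ge 1$ with at least one strict; but the clean route is that \emph{all} induced parameters satisfy $x_{\pi_i}\ge 2$. This gives $\sum_{K\in\pi_i}(x_{\pi_i}-1)\ge \gauss{n-k}{k+1}_q$, whence
\[
x-1\ \ge\ \frac{\gauss{n-k}{k+1}_q}{\gauss{n-k-1}{k}_q}.
\]

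The final step is the routine Gaussian-binomial simplification of this ratio. Expanding $\gauss{n-k}{k+1}_q=\frac{(q^{n-k}-1)\cdots(q^{n-2k}-1)}{(q^{k+1}-1)\cdots(q-1)}$ and $\gauss{n-k-1}{k}_q=\frac{(q^{n-k-1}-1)\cdots(q^{n-2k-1}-1)}{(q^{k}-1)\cdots(q-1)}$, almost all factors cancel and the quotient collapses to $\frac{q^{n-k}-1}{q^{2k+2}-1}$ (one checks the surviving numerator factor is $q^{n-k}-1$ and the surviving denominator factor is $q^{k+1}-1$ times $q^{k+1}-1$ up to the telescoping, yielding $q^{2k+2}-1$). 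Substituting gives $x\ge\frac{q^{n-k}-1}{q^{2k+2}-1}+1$, as claimed. The main obstacle I anticipate is the middle step: rigorously ruling out that \emph{every} $t$-space through $K$ induces a point-pencil, and more delicately handling the case where $K\notin\mathcal{L}$ (in which case the $+1$ shift in the numerator must be argued differently). I expect the cleanest argument restricts attention to $K\in\mathcal{L}$, using that $\mathcal{L}$ is nonempty so such $K$ exists, and then invokes Theorem~\ref{DrudgeGeneralPG} as a dichotomy: either some induced set is genuinely larger than a point-pencil (giving $x_{\pi_i}\ge 2$), or all are point-pencils and $\mathcal{L}$ is globally a point-pencil, which is excluded. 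Verifying that the point-pencil exclusion propagates correctly through the counting, rather than the Gaussian algebra, is where the real care is needed.
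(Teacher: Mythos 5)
Your overall skeleton matches the paper's (fix $K\in\mathcal{L}$, apply Lemma~\ref{Aid}, propagate the point-pencil exclusion into the subspaces via Theorem~\ref{DrudgeGeneralPG}, lower-bound the induced parameters), but your choice $t=2k+1$ breaks the argument at the decisive step, and not in a repairable way. With $t=2k+1$ each $\pi_i$ is a $(2k+1)$-space, and Theorem~\ref{Uniquenessx=1} then permits a parameter-$1$ induced set that is \emph{not} a point-pencil, namely the set of all $k$-spaces in a hyperplane of $\pi_i$; Theorem~\ref{DrudgeGeneralPG} gives you nothing in that case, so you cannot conclude $x_{\pi_i}\geq 2$. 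This loophole is realized: take $\mathcal{L}$ to be all $k$-spaces in a fixed hyperplane $H$ of $\pg(n,q)$ (Example~\ref{ex:trivial_projective}(3)). It is a Cameron-Liebler $k$-set, not a point-pencil, of parameter $x=\frac{q^{n-k}-1}{q^{k+1}-1}$, and any $(2k+1)$-space $\pi_i\supset K$ not contained in $H$ meets $H$ in a hyperplane of $\pi_i$ and induces exactly such a parameter-$1$ hyperplane example. The same example refutes the bound your computation would actually produce, namely $x\geq\frac{q^{n-k}-1}{q^{k+1}-1}+1$: note that for your own $t$ the ratio is $\gauss{n-k}{k+1}_q\big/\gauss{n-k-1}{k}_q=\frac{q^{n-k}-1}{q^{k+1}-1}$, not $\frac{q^{n-k}-1}{q^{2k+2}-1}$ (and in any case $(q^{k+1}-1)^2\neq q^{2k+2}-1$), so your Gaussian-binomial simplification is also incorrect for $t=2k+1$.

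The paper instead takes $t=3k+2$, which is precisely why the hypothesis reads $n\geq 3k+3$ rather than $n\geq 2k+2$ --- a discrepancy that should have flagged your choice of $t$. With $t=3k+2>2k+1$ the hyperplane alternative in Theorem~\ref{Uniquenessx=1} disappears, so a parameter-$1$ induced set would have to be a point-pencil, which Theorem~\ref{DrudgeGeneralPG} forbids; and the second half of Theorem~\ref{Non-existence01} (no parameters in $]1,2[$ once the ambient dimension is at least $3k+2$) upgrades $x_{\pi_i}>1$ to $x_{\pi_i}\geq 2$. (Your integrality argument via $(k+1)\mid(t+1)$ would also apply at $t=3k+2$, but it is moot as long as $x_{\pi_i}=1$ is not excluded.) With $t=3k+2$ one has $t-k=2k+2$, and $\gauss{n-k}{2k+2}_q\big/\gauss{n-k-1}{2k+1}_q=\frac{q^{n-k}-1}{q^{2k+2}-1}$ gives the stated bound.
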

\begin{proof}
Let $\mathcal{L}$ be a non-empty Cameron-Liebler $k$-set in $\pg(n,q)$ of parameter $x$. Choose a fixed $k$-set $K \in \mathcal{L}$.
Then, by Lemma~\ref{Aid}, for an arbitrary $t$, with  $3k+2 \leq t\leq n-1$, it holds that
\begin{equation}
x=\frac{\left( \sum_{K \in \pi_i}x_{\pi_i}-\gauss{n-k}{t-k}_q\right)}{\gauss{n-k-1}{t-k-1}_q}+1.
\end{equation}
As usual, $x_{\pi_i}$ denotes the parameter of the induced Cameron-Liebler $k$-set in the $t$-dimensional subspace $\pi_i$.
Now observe the following facts.
\begin{itemize}
\item Since $K\in \cL$ and, using Theorem \ref{Non-existence01}, we have that every Cameron-Liebler $k$-set in every $t$-dimensional space $\pi_i$ through $K$ has parameter $x_{\pi_i}\geq 1$;
\item if $\cL$ is not a point-pencil, by Theorem \ref{DrudgeGeneralPG}, $\cL$ is not a point-pencil in $\pi_i$. Hence, by Theorem \ref{CharX=1PG(n,q)}, every $x_{\pi_i}>1$;
\item if we assume that $t=3k+2$, then, by Theorem \ref{Non-existence01}, $x_{\pi_i}\geq 2$.
\end{itemize} 
Hence, under the assumptions of the theorem, we find that
$$x\geq \frac{\gauss{n-k}{t-k}_q}{\gauss{n-k-1}{t-k-1}_q}+1=\frac{\gauss{n-k}{2k+2}_q}{\gauss{n-k-1}{2k+1}_q}+1=\frac{q^{n-k}-1}{q^{2k+2}-1}+1,$$
where in the last two formulas, we let $t=3k+2$.
\end{proof}
\begin{Opm}
This result is also valid for Cameron-Liebler $k$-sets in $\ag(n,q)$ due to Theorem \ref{ToInftyAndBeond}. But for the affine case, we will improve these results.
\end{Opm}

\subsection{Cameron-Liebler $k$-sets in $\ag(n,q)$}
We use a similar strategy as in the previous subsection, but we will exploit Theorem \ref{ToInftyAndBeond}. 
\begin{Gev}\cite[Corollary 6.17]{Me2}\label{No2}
There do not exist Cameron-Liebler $k$-sets in $\ag(n,q)$ with parameter $x=2$, with $n\geq k+2$.
\end{Gev}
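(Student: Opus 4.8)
The final statement is Corollary~\ref{No2}, asserting the non-existence of Cameron-Liebler $k$-sets in $\ag(n,q)$ of parameter $x=2$ for $n \geq k+2$. The plan is to prove this by a contradiction argument combining the transfer theorem to the projective closure (Theorem~\ref{ToInftyAndBeond}) with the lower bound just established in Theorem~\ref{NonEx}, reducing the affine problem to a projective one where the parameter $x=2$ can be shown to violate the inequality $x \geq \frac{q^{n-k}-1}{q^{2k+2}-1}+1$ whenever $n$ is large enough.

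**Main line of attack.** Suppose for contradiction that $\mathcal{L}$ is a Cameron-Liebler $k$-set of parameter $x=2$ in $\ag(n,q)$. By Theorem~\ref{ToInftyAndBeond}, $\mathcal{L}$ extends to a Cameron-Liebler $k$-set of the same parameter $x=2$ in the projective closure $\pg(n,q)$. Since $x=2$, the set $\mathcal{L}$ is neither empty (parameter $0$) nor a point-pencil (parameter $1$, by Theorem~\ref{Uniquenessx=1}). If the dimension satisfies $n \geq 3k+3$, Theorem~\ref{NonEx} then forces
\[
2 = x \geq \frac{q^{n-k}-1}{q^{2k+2}-1}+1,
\]
i.e. $\frac{q^{n-k}-1}{q^{2k+2}-1} \leq 1$, which fails already for $n-k > 2k+2$, that is for $n > 3k+2$; so the regime $n \geq 3k+3$ is eliminated immediately. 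The remaining cases $k+2 \leq n \leq 3k+2$ must be handled separately, presumably by a direct argument in the smaller ambient dimension or by invoking the original source's low-dimensional analysis.

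**The gap between the two bounds.** The real obstacle is precisely the mismatch between the hypothesis $n \geq k+2$ in the corollary and the hypothesis $n \geq 3k+3$ needed to apply Theorem~\ref{NonEx}. The clean projective reduction only disposes of the large-$n$ tail; it says nothing about the intermediate band $k+2 \leq n \leq 3k+2$, and in particular nothing about the smallest interesting case near $n = 2k+1$ or $2k+2$. For these residual dimensions I would expect to fall back on the affine-specific counting machinery — either Lemma~\ref{AidAff} (available only for lines) or a direct spread-based argument analogous to the proof of Theorem~\ref{th:affine-induced}, exploiting the parallel classes $E$ of $k$-spaces sharing a $(k-1)$-space at infinity. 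The key affine feature to leverage is that every parameter in $\ag(n,q)$ is a genuine integer and that point-pencils are the unique parameter-$1$ examples (Theorem~\ref{Unique1AG}), so an $x=2$ set, restricted to suitable subspaces, would have to decompose into configurations that the integrality and uniqueness constraints rule out.

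**Expected difficulty.** The main difficulty is therefore not the high-dimensional case, which collapses cleanly via Theorem~\ref{NonEx}, but rather a complete treatment of the low-dimensional band $k+2 \leq n \leq 3k+2$. Since this corollary is cited from \cite[Corollary 6.17]{Me2}, I anticipate that the authors simply invoke the prior result rather than reprove the delicate small-dimensional cases here, and that the present excerpt's contribution is to show how the new Theorem~\ref{NonEx} gives a transparent and self-contained proof in the asymptotic range while the full statement rests on the earlier reference.
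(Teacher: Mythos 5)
The paper offers no proof of this statement at all: it is imported verbatim from \cite[Corollary 6.17]{Me2}, so your closing guess --- that the authors simply cite the prior result rather than reprove it --- is exactly right. There is consequently no in-paper argument to compare yours against.

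Judged as a self-contained proof, however, your proposal has a genuine and consequential gap, which you yourself flag. The reduction via Theorem~\ref{ToInftyAndBeond} and Theorem~\ref{NonEx} is sound and does dispose of the range $n\geq 3k+3$: a parameter-$2$ set is neither empty nor a point-pencil, and $2\geq \frac{q^{n-k}-1}{q^{2k+2}-1}+1$ forces $n\leq 3k+2$, a contradiction. But the band $k+2\leq n\leq 3k+2$ that you leave open is not a peripheral technicality --- it is precisely where the corollary is put to work in this paper. In the proof of Theorem~\ref{NonExAG}, Corollary~\ref{No2} is applied to the Cameron-Liebler $k$-sets induced in $t$-dimensional subspaces with $t=2k+1$, i.e.\ in the minimal admissible dimension, which your argument does not reach; if the corollary only held for $n\geq 3k+3$, the main affine theorem of the paper would collapse. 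The missing ingredient is the affine-specific analysis of \cite{Me2} (integrality of affine parameters, uniqueness of the parameter-$1$ example, and the spread/parallel-class structure), which neither this paper nor your sketch reproduces. So: correct diagnosis of what the paper does, correct partial argument for large $n$, but no proof of the statement as stated.
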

Note that the previous corollary does not always hold in $\pg(n,q)$. This would lead to the improvement of the previous section.
\begin{St}\label{NonExAG}
Suppose that $n\geq 2k+2$ and $k\geq 1$. Let $\mathcal{L}$ be a Cameron-Liebler $k$-set of parameter $x$ in $\ag(n,q)$ such that $\mathcal{L}$ is not a point-pencil, nor the empty set. Then 
$$x\geq 2\left(\frac{q^{n-k}-1}{q^{k+1}-1}\right)+1.$$
\end{St}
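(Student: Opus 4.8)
The plan is to mirror the proof of Theorem~\ref{NonEx}, but to exploit the two affine phenomena that have no projective counterpart: the sharper classification of parameter $1$ (Theorem~\ref{Unique1AG}, with no hyperplane exception) and the non-existence of parameter $2$ (Corollary~\ref{No2}). First I would pass to the projective closure $\pg(n,q)$ of $\ag(n,q)$, with hyperplane at infinity $\pi_\infty$; by Theorem~\ref{ToInftyAndBeond}, $\mathcal{L}$ is then a Cameron-Liebler $k$-set of the same parameter $x$ in $\pg(n,q)$, and crucially none of its members lie in $\pi_\infty$. Fixing a $k$-space $K \in \mathcal{L}$ and applying Lemma~\ref{Aid} with the smallest admissible value $t = 2k+1$ (legitimate since $n \geq 2k+2$), I obtain
$$x = 1 + \frac{\sum_{K \in \pi_i} x_{\pi_i} - \gauss{n-k}{k+1}_q}{\gauss{n-k-1}{k}_q},$$
where the $\pi_i$ range over the $\gauss{n-k}{k+1}_q$ projective $(2k+1)$-spaces through $K$ and $x_{\pi_i}$ is the parameter of the induced set $\mathcal{L} \cap [\pi_i]_k$.

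The central step is to prove $x_{\pi_i} \geq 3$ for every $\pi_i$. Since $K$ is affine, each $\pi_i$ meets $\pi_\infty$ in a hyperplane of $\pi_i$ and hence corresponds to an affine $(2k+1)$-space $\sigma_i$; moreover $\mathcal{L}\cap[\pi_i]_k$ contains no $k$-space of $\pi_i \cap \pi_\infty$. The affine restriction $\mathcal{L}\cap[\sigma_i]_k$ is an affine Cameron-Liebler $k$-set in $\sigma_i \cong \ag(2k+1,q)$ by Theorem~\ref{th:affine-induced}, and its projective closure is precisely $\mathcal{L}\cap[\pi_i]_k$, so by Theorem~\ref{ToInftyAndBeond} its affine parameter coincides with $x_{\pi_i}$; in particular $x_{\pi_i}$ is a positive integer. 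As $K \in \mathcal{L}\cap[\sigma_i]_k$, this set is non-empty, so $x_{\pi_i}\geq 1$. If some $x_{\pi_i} = 1$, then Theorem~\ref{Unique1AG} forces $\mathcal{L}\cap[\sigma_i]_k$ to be a point-pencil, whence Theorem~\ref{PPAG} makes $\mathcal{L}$ itself a point-pencil, contradicting the hypothesis; and $x_{\pi_i} = 2$ is impossible by Corollary~\ref{No2} (valid since $2k+1 \geq k+2$ for $k\geq 1$). Therefore $x_{\pi_i} \geq 3$.

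Finally I would substitute $\sum_{K\in\pi_i} x_{\pi_i} \geq 3\gauss{n-k}{k+1}_q$ into the displayed identity, obtaining
$$x \geq 1 + \frac{2\gauss{n-k}{k+1}_q}{\gauss{n-k-1}{k}_q},$$
and a short manipulation of Gaussian binomials gives $\gauss{n-k}{k+1}_q/\gauss{n-k-1}{k}_q = (q^{n-k}-1)/(q^{k+1}-1)$, which yields the claimed bound. The factor $2$ (in place of the $1$ appearing in the projective estimate of Theorem~\ref{NonEx}) is exactly the gain of one unit in the lower bound $x_{\pi_i}\geq 3$, combined with the use of the smaller exponent $t=2k+1$. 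I expect the only delicate point to be the bookkeeping that identifies the projective induced parameter $x_{\pi_i}$ with a genuine affine parameter, so that both Theorem~\ref{Unique1AG} and Corollary~\ref{No2} may legitimately be invoked; the remainder is a routine transcription of the argument for Theorem~\ref{NonEx}.
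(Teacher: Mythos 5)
Your proposal is correct and follows essentially the same route as the paper: pass to the projective closure, apply Lemma~\ref{Aid} with $t=2k+1$, and force $x_{\pi_i}\geq 3$ via Theorems~\ref{Unique1AG} and~\ref{PPAG} together with Corollary~\ref{No2}. The only cosmetic difference is that you identify the induced parameter as an affine one via Theorem~\ref{th:affine-induced} followed by Theorem~\ref{ToInftyAndBeond}, whereas the paper uses Theorem~\ref{Subsp} followed by Theorem~\ref{ToInftyAndBeondConv}; both are valid.
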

\begin{proof}
Consider the projective closure of $\ag(n,q)$. As usual, denote by $\pi_A$ the affine part of a projective subspace $\pi$. 
By Theorem \ref{ToInftyAndBeond}, every Cameron-Liebler $k$-set $\cL$ in $\ag(n,q)$ is a Cameron-Liebler $k$-set in $\pg(n,q)$. 
Suppose now that $\cL$ is not empty and choose an (affine) $k$-space $K\in \cL$. 
Choose $t$ arbitrary such that $2k+1\leq t\leq n-1$. By Lemma \ref{Aid}, 
\begin{equation}\label{eq:affine-x}
x=\frac{\left( \sum_{K \in \pi_i}x_{\pi_i}-\gauss{n-k}{t-k}_q\chi_{\mathcal{L}}(K)\right)}{\gauss{n-k-1}{t-k-1}_q}+\chi_{\mathcal{L}}(K),
\end{equation}
where $x_{\pi_i}$ denotes the parameter of the (projective) Cameron-Liebler $k$-set in the subspace $\pi_i$ of dimension $t$.

Now consider $\cL_{\pi_i}=\cL\cap [\pi_i]_k$ for an arbitrary subspace $\pi_i$. Then, by Theorem \ref{Subsp}, 
$\cL_{\pi_i}$ is a Cameron-Liebler $k$-set in  $\pi_i$. Since $\cL$ is a set of affine $k$-spaces, $\cL_{\pi_i}$ is fully contained in $\pi_{iA}$.
Hence, by Theorem \ref{ToInftyAndBeondConv}, $\cL_{\pi_i}$ is a Cameron-Liebler $k$-set in $\pi_{iA}$ of the same parameter $x_{\pi_i}$.  Note that from this follows that the parameter $x_{\pi_i}\in \mathbb{N}$. Since $\cL_{\pi_i}$ is not empty, we have that $x_{\pi_i} \neq 0$. And if there exists a subspace $\pi_i$ such that $x_{\pi_i}=1$, then, by Theorem \ref{Unique1AG}, it holds that $\cL\cap[\pi_{iA}]_k$ is a point-pencil and hence, by Theorem \ref{PPAG}, it holds that $\cL$ is a point-pencil. Thus we may assume that $x_{\pi_i}\not=1$. 

Furthermore, it holds that $x_{\pi_i}\geq 2$, and, by Corollary \ref{No2}, that $x_{\pi_i}\geq 3$. 
Using this lower bound on $x_{\pi_i}$ in Equation~\eqref{eq:affine-x}, we get
$$x\geq 2\left(\frac{\gauss{n-k}{t-k}_q}{\gauss{n-k-1}{t-k-1}_q}\right)+1.$$
Finally, using $t=2k+1$, we get
$$x\geq 2\left(\frac{\gauss{n-k}{k+1}_q}{\gauss{n-k-1}{k}_q}\right)+1=2\left(\frac{q^{n-k}-1}{q^{k+1}-1}\right)+1.$$
\end{proof}

\subsection{Improvement of existing results}
Up to our knowledge, the currently best known non-existence results for general $n, k$ and $q$ is the following theorems.
\begin{St}\cite[Theorem 4.9]{Jozefien}\label{JozefienClassific}
There are no Cameron-Liebler $k$-sets in $\pg(n,q)$, with $n\geq 3k+2$ and $q\geq 3$, of parameter $x$ if
	$$2\leq x\leq \frac{1}{\sqrt[8]{2}}q^{\frac{n}{2}-\frac{k^2}{4}-\frac{3k}{4}-\frac{3}{2}}(q-1)^{\frac{k^2}{4}-\frac{k}{4}+\frac{1}{2}}\sqrt{q^2+q+1}.$$
\end{St}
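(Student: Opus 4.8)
The bound to be proven grows like $q^{n/2}$ times lower-order factors, i.e.\ roughly the square root of the maximal parameter $\frac{q^{n+1}-1}{q^{k+1}-1}$ from Lemma~\ref{Prop}. This square-root shape is the signature of a second-moment (``local counting'') argument in the spirit of Metsch, and this is the route I would take. Assume for contradiction that a Cameron-Liebler $k$-set $\mathcal{L}$ of parameter $x$ with $2\le x$ and $x$ below the stated bound exists, and aim to force $x$ above the bound. For every point $P$ write $a_P:=|[P]_k\cap\mathcal{L}|$ for the number of elements of $\mathcal{L}$ through $P$; the whole argument is an analysis of how the $a_P$ can be distributed over the $\gauss{n+1}{1}_q$ points of $\pg(n,q)$.

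First I would record the first two moments of the sequence $(a_P)_P$. Double counting incident pairs (point, element of $\mathcal{L}$) gives $\sum_P a_P=|\mathcal{L}|\gauss{k+1}{1}_q=x\gauss{n}{k}_q\gauss{k+1}{1}_q$. For the second moment I would count triples $(P,K_1,K_2)$ with $P\in K_1\cap K_2$ and $K_1,K_2\in\mathcal{L}$ distinct: fixing $K_1\in\mathcal{L}$ and using the intersection distribution of Theorem~\ref{EquivalenceProj}(3) (the number of elements of $\mathcal{L}$ meeting $K_1$ in a $(k-i)$-space), with each such $K_2$ contributing its $\gauss{k-i+1}{1}_q$ common points, yields a closed expression for $\sum_P a_P(a_P-1)$ that is an explicit quadratic polynomial in $x$ with $q$-binomial coefficients. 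These two identities are exactly the input that the equivalent definitions in Theorem~\ref{EquivalenceProj} are designed to produce.

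The decisive step is to convert these moment identities into a lower bound on $x$. The crude version is the variance inequality $\sum_P(a_P-\bar a)^2\ge 0$ with $\bar a=\frac{1}{\gauss{n+1}{1}_q}\sum_P a_P$; substituting the two moments gives a quadratic inequality in $x$ whose positive root is already of the correct order $q^{n/2-\dots}$. To reach the sharp constant I would instead run Metsch's refinement: partition the points according to the size of $a_P$, observe that a point with $a_P$ close to the full pencil value $\gauss{n}{k}_q$ behaves like the centre of a point-pencil, bound the number of such ``heavy'' points using the structure of the Grassmann graph, and feed this back into the moment count. Combined with the extremal analysis (if some induced set were a point-pencil, then Theorem~\ref{DrudgeGeneralPG} together with Theorem~\ref{Uniquenessx=1} would force $x=1$, contradicting $x\ge 2$), this produces a genuine lower bound of the form $x\ge c\, q^{n/2-k^2/4-3k/4-3/2}(q-1)^{k^2/4-k/4+1/2}\sqrt{q^2+q+1}$.

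The main obstacle, and the bulk of the work, lies entirely in this last step: extracting the exact constant $2^{-1/8}$ and the precise exponent of $(q-1)$. Tracking only the leading powers through the approximation $\gauss{a}{b}_q\approx q^{b(a-b)}$ recovers the $q$-exponent $\frac n2-\frac{k^2}4-\frac{3k}4-\frac32$ painlessly, but the subleading factors $(q-1)^{k^2/4-k/4+1/2}\sqrt{q^2+q+1}$ and the numerical $2^{-1/8}$ only emerge once the $q$-binomials are kept exactly and the threshold in Metsch's partition is optimised (an AM--GM balancing step is what manufactures the eighth root of $2$). The hypothesis $q\ge 3$ is precisely what keeps that optimisation, and the estimate on the number of heavy points, valid; I would expect verifying that every inequality survives down to $q=3$ to be the most delicate and error-prone part of the argument.
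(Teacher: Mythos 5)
This statement is not proved in the paper at all: it is quoted verbatim from \cite[Theorem 4.9]{Jozefien} as background, so there is no internal proof to compare your attempt against. Judged on its own merits, what you have written is a research plan rather than a proof, and the gap is exactly where you yourself locate it. The first two moments of $(a_P)_P$ are indeed computable from Theorem~\ref{EquivalenceProj}(3), and the observation that the target bound has square-root shape is a sensible heuristic; but from that point on nothing is actually established. You assert that the variance inequality ``gives a quadratic inequality in $x$ whose positive root is already of the correct order'' without writing down the quadratic or checking the sign and size of its coefficients --- since both moments are \emph{exact} polynomials in $x$, it is not automatic that Cauchy--Schwarz yields a nontrivial constraint rather than an identity that holds for all admissible $x$, and this must be verified. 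The subsequent ``Metsch refinement'' (partition into heavy and light points, bounding heavy points via the Grassmann graph, feeding back into the moments, optimising a threshold by AM--GM to produce $2^{-1/8}$) is described only in qualitative terms: no definition of ``heavy'', no bound on their number, no inequality is derived, and the claim that this machinery outputs precisely the exponents $\frac{n}{2}-\frac{k^2}{4}-\frac{3k}{4}-\frac{3}{2}$ on $q$ and $\frac{k^2}{4}-\frac{k}{4}+\frac{1}{2}$ on $(q-1)$ together with the factor $\sqrt{q^2+q+1}$ is pure reverse-engineering from the statement. The roles of the hypotheses $n\geq 3k+2$ and $q\geq 3$ are likewise only guessed at. To turn this into a proof you would need to carry out the moment computation explicitly, state and prove the heavy-point lemma, and perform the optimisation --- i.e., essentially reproduce the argument of \cite[Theorem 4.9]{Jozefien}, none of which is present here.
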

Note that this upper bound has size roughly $q^{\frac{n}{2}-k}$. 
\begin{St}\cite[Theorem 7]{Ihringer2021}\label{Ihringer}
Let $n\geq 2k+1$ and suppose that $\mathcal{L}$ is a Cameron-Liebler $k$-set in $\pg(n,q)$ of parameter $x$. If $16x\leq \min\{q^{\frac{n-k-l+2}{3}}, q^{\frac{n-2k-r}{3}}\}$, where $n+1=m(k+1)-r$ with $0\leq r< k+1$ and $\frac{q^{l-1}-1}{q-1}< x\leq \frac{q^l-1}{q-1}$.\\
Then $x\leq 2$ and $\mathcal{L}$ is trivial.
\end{St}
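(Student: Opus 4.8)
The plan is to work in the algebraic setting that underlies the equivalent formulations of Theorem~\ref{EquivalenceProj}. The condition $\chi_\mathcal{L}\in\im(P_n^T)$ says precisely that $\chi_\mathcal{L}$ is a \emph{Boolean degree one function} on the Grassmann scheme of $k$-spaces of $\pg(n,q)$: it lives in $V_0\oplus V_1$, the sum of the all-ones eigenspace and the first nontrivial eigenspace of the association scheme. First I would record the restricted eigenvalues of the \emph{disjointness graph} $\Gamma$, whose vertices are the $k$-spaces and whose edges join skew pairs; by the count in Corollary~\ref{EquivalencenoParamP} a fixed $k$-space has $q^{(k+1)^2}\gauss{n-k}{k+1}_q$ skew partners, of order $q^{(k+1)(n-k)}$, and the eigenvalue attached to $V_1$ is, up to sign, of order $q^{k^2+k}\gauss{n-k-1}{k}_q$. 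The relevant \emph{ratio} (Hoffman--Delsarte) bound then controls $|\mathcal{L}|=x\gauss{n}{k}_q$, and the deviation of $\mathcal{L}$ from a point-pencil, in terms of $x$ and this spectral gap.

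The core of the argument is a weighted expander-mixing estimate on $\Gamma$ combined with the restriction machinery of Section~3. Since $\chi_\mathcal{L}$ has components only in $V_0\oplus V_1$, the number of elements of $\mathcal{L}$ skew to a fixed $K$ is the exact linear expression of Theorem~\ref{EquivalenceProj}~(2), and the size of the $V_1$-component measures exactly how far $\mathcal{L}$ is from a dictator. I would then localize: by Theorem~\ref{Subsp} each restriction $\mathcal{L}\cap[\pi]_k$ is again Cameron-Liebler, and by Theorem~\ref{DrudgeGeneralPG} a point-pencil in some subspace $\pi$ propagates to a global point-pencil, so it suffices to rule out a nontrivial $V_1$-component. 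The two terms of $\min\{q^{(n-k-l+2)/3},q^{(n-2k-r)/3}\}$ should arise from the two directions of this localization. The exponent in $l$, with $\frac{q^{l-1}-1}{q-1}<x\le\frac{q^l-1}{q-1}$, comes from the point side: once $x$ is pinned between two consecutive Gaussian thresholds, $l$ bounds how large the support of $\chi_\mathcal{L}$ can be through a point before the mixing inequality is violated. The exponent in $r$, with $n+1=m(k+1)-r$, comes from the spread side, where the failure of $(k+1)\mid(n+1)$ forces a correction governed by $r$; when $r=0$, Theorem~\ref{EquivalenceProj}~(4) gives integrality of $x$ and simplifies this term. The hypothesis $16x\le\min\{\cdots\}$ is exactly the slack needed to drive the estimate below the gap between admissible parameters.

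The hard part will be making the ratio bound quantitatively sharp enough to close that gap, that is, to show that a nontrivial $\mathcal{L}$ must carry a $V_1$-component too large to coexist with the assumed smallness of $x$; this requires tracking the scale $q^{(n-\cdots)/3}$ against the true spectral gap of $\Gamma$ rather than the crude global eigenvalue ratio, and correctly matching the cutoffs $l$ and $r$ in the cases $(k+1)\mid(n+1)$ versus $(k+1)\nmid(n+1)$. Once the estimate forces $x\le 2$, the conclusion is immediate from the structure theory already assembled: an integer parameter $x\in\{0,1,2\}$ leaves only the empty set, the point-pencils and hyperplane sets classified in Theorem~\ref{CharX=1PG(n,q)} for $x=1$, and their unions for $x=2$, all of which are trivial. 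I would close by verifying that the borderline cases of the two exponents do not admit any further non-trivial configuration, so that triviality holds throughout the stated range.
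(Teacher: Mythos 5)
This statement is not proved in the paper at all: it is quoted verbatim from \cite[Theorem 7]{Ihringer2021} and used only as a benchmark in Section 6.3, so there is no internal proof to compare your attempt against. Judged on its own terms, your text is a research plan rather than a proof. The entire content of the theorem is the quantitative implication ``$16x\leq \min\{q^{\frac{n-k-l+2}{3}}, q^{\frac{n-2k-r}{3}}\}$ forces $x\leq 2$,'' and that is precisely the step you defer (``the hard part will be making the ratio bound quantitatively sharp enough to close that gap''). You never derive the exponents $\frac{n-k-l+2}{3}$ and $\frac{n-2k-r}{3}$, never explain where the cube roots or the constant $16$ come from, and never exhibit the inequality that would be violated if $x\geq 3$. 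Asserting that the two terms of the minimum ``should arise from the two directions of this localization'' is a guess about the shape of an argument, not an argument.

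There are also two concrete soft spots in what you do write. First, the spectral framework you invoke (restriction to $V_0\oplus V_1$, Hoffman--Delsarte on the disjointness graph) reproduces exactly the information already contained in Theorem \ref{EquivalenceProj}(2) and Lemma \ref{Prop}; by itself it yields bounds of the order $x\leq \frac{q^{n+1}-1}{q^{k+1}-1}$, nowhere near the $q^{(n-2k-r)/3}$ scale, so some genuinely new combinatorial input (in Ihringer's paper, a careful analysis of cross-intersecting families together with the parameter $l$ pinning $x$ between consecutive Gaussian thresholds) is indispensable and absent here. Second, your closing step is wrong as stated: once $x\leq 2$ is known, the case $x=2$ is not disposed of by Theorem \ref{CharX=1PG(n,q)} plus ``unions,'' since for $n>2k+1$ the hyperplane example has parameter $\frac{q^{n-k}-1}{q^{k+1}-1}\neq 1$ and a union of two point-pencils is not a Cameron--Liebler set; classifying $x=2$ requires a separate argument that you do not supply.
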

This bound would in the best case scenario have size roughly $\min\{q^{\frac{n-k+1}{3}}, q^{\frac{n-2k}{3}}\}=q^{\frac{n-2k}{3}}$.\\
In general no theorem of these above excludes the other, because both have different strengths in different cases of $(n,k)$.

For the affine case we have not found any non-existing conditions, 
except those that arose from non-existence conditions in $\pg(n,q)$. Hence, so far, in the affine case, the best known
non-existence conditions are induced by previous theorems.

\subsubsection{The improvement of Theorem \ref{NonEx}}
If we compare Theorem \ref{NonEx} with the other theorems above, it is clear that we require $n\geq 3k+3$ 
which is a slightly stronger condition, but we will show that our bound is better for large $n$. To do this we need to remark that the bound of Theorem \ref{NonEx} is roughly $q^{n-3k-2}$, which gives an improvement of the bound of 
Theorem \ref{JozefienClassific} and Theorem \ref{Ihringer}, when $n>4k+4$. Note that, due to Theorem \ref{ToInftyAndBeond}, this result is 
also valid for Cameron-Liebler $k$-sets in $\ag(n,q)$.

\subsubsection{The improvement of Theorem \ref{NonExAG}}
First of all we note that Theorem \ref{NonEx} is also valid for Cameron-Liebler $k$-sets in $\ag(n,q)$, with $n\geq 3k+3$. So we are left to compare if Theorem \ref{NonExAG} improves this theorem. We can see that the lower bound in Theorem \ref{NonExAG} is in fact roughly of order $q^{n-2k-1}$. Comparing this with the lower bound of Theorem \ref{NonEx} which is roughly $q^{n-3k-2}$, we can see a significant improvement.

If $n\geq3k+2$, we can compare Theorem \ref{NonExAG} with Theorem \ref{JozefienClassific}. We see that Theorem \ref{NonExAG} where the lower bound is roughly $q^{n-2k-1}$, which is in this case better than the bound in Theorem \ref{JozefienClassific} which is roughly $q^{\frac{n}{2}-k}$. Similar results hold comparing with Theorem \ref{Ihringer}.

If $n\geq 2k+2$, we compare with Theorem \ref{Ihringer}. In this case we also have a better bound.
%

\section{Conclusion}
The following step in the research can be to try to classify certain Cameron-Liebler $k$-sets in $\ag(n,q)$ or $\pg(n,q)$ for certain $k,n$ and $q$. \\

\section{Acknowledgments}
The authors want to thank Alexander Gavrilyuk and Ferdinand Ihringer for their suggestions which improved the article.

\section{Appendix}
This section gives an alternative proof of Theorem \ref{NonExAG} for $k=1$. This is in fact interesting because this proof is based on similar arguments as the proof of Theorem \ref{NonEx}. It is in our view nice to see that we can  obtain this result by arguments that are not based on exploiting the connection between Cameron-Liebler $k$-sets in $\ag(n,q)$ and $\pg(n,q)$.\\ Keep in mind that this proof only works for $k=1$, but if there is an equivalent of Theorem \ref{EquivalenceAff} for $k$-spaces, we could probably use the same technique as in Theorem \ref{NonEx}. Our guess is that by using this technique the result stays the same as Theorem \ref{NonExAG}. yet it would be interesting to see these arguments, and we could of course be wrong.

\subsection{For Cameron-Liebler line classes in $\ag(n,q)$}

\begin{St}\cite[Theorem 6.5 and 6.8]{Me2}\label{Classification}
Suppose that $\mathcal{L}$ is a Cameron-Liebler line class of parameter $x$ in $\ag(n,q)$, for $n\geq 3$, then the following statements are true.
\begin{itemize}
\item If $x=1$, then $\mathcal{L}$ is a point-pencil.
\item The parameter $x$ cannot be $2$.
\end{itemize}
\end{St}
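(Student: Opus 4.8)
The plan is to prove both statements by reducing, through the affine counting identity of Lemma~\ref{AidAff}, to the classification of Cameron-Liebler line classes in $\ag(3,q)$, which I would take as the base case (the reduction is vacuous when $n=3$, since Lemma~\ref{AidAff} requires $3\le t\le n-1$). This mirrors the strategy of Theorem~\ref{NonEx}: fix a line of $\mathcal{L}$, express $x$ through the parameters $x_{\pi_i}$ of the classes induced in the $3$-spaces on that line, and play the lower bounds $x_{\pi_i}\ge 1$ against the identity. I would stress that, although the two claims are exactly Theorem~\ref{Unique1AG} and Corollary~\ref{No2} specialised to $k=1$, the whole point here is to avoid the passage to $\pg(n,q)$ on which those results rest.

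For the first statement, assume $n\ge 4$ and $x=1$, fix $\ell\in\mathcal{L}$, and apply Lemma~\ref{AidAff} with $t=3$ and $\chi_{\mathcal{L}}(\ell)=1$. First I would check that substituting $x_{\pi_i}=1$ for all of the $\gauss{n-1}{2}_q$ spaces on $\ell$ makes the right-hand side equal to $1$; since every induced class contains $\ell$ and hence has $x_{\pi_i}\ge 1$, and the relevant coefficient is positive, the identity forces $x_{\pi_i}=1$ for every $3$-space $\pi_i$ on $\ell$. By the base case each $\mathcal{L}\cap[\pi_i]_1$ is then a point-pencil, whose vertex lies on $\ell$ since $\ell\in\mathcal{L}\cap[\pi_i]_1$. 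Consequently no line of $\mathcal{L}$ can be skew or parallel to $\ell$: a skew line $m$ would span with $\ell$ a $3$-space on which the induced point-pencil would force $m\cap\ell\ne\emptyset$, while a parallel line would lie with $\ell$ in some $3$-space whose point-pencil would make them concurrent. Thus every line of $\mathcal{L}$ meets $\ell$, and as $\ell$ was arbitrary in $\mathcal{L}$, the lines of $\mathcal{L}$ meet pairwise. The classical dichotomy for pairwise-intersecting lines then gives that $\mathcal{L}$ is either concurrent or coplanar; coplanarity is impossible because a plane carries only $q^2+q$ lines whereas $|\mathcal{L}|=\frac{q^n-1}{q-1}$, so $\mathcal{L}$ is concurrent at an affine point $p$, and the equality of sizes yields $\mathcal{L}=[p]_1$.

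For the second statement I would argue by contradiction. Suppose $x=2$; then $\mathcal{L}$ is non-empty and, by the first part, not a point-pencil. Fix $\ell\in\mathcal{L}$ and apply Lemma~\ref{AidAff} with $t=3$: each $x_{\pi_i}$ is a positive integer, and by the base case $x_{\pi_i}\ne 2$. If one could show that no $x_{\pi_i}$ equals $1$, then every $x_{\pi_i}\ge 3$, and the same simplification as in Theorem~\ref{NonEx} would give
\[
x\ge 2\left(\frac{q^{n-1}-1}{q^2-1}\right)+1>2,
\]
contradicting $x=2$ and finishing the proof. So the entire second statement reduces to excluding the value $x_{\pi_i}=1$.

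I expect this exclusion to be the main obstacle. It is equivalent to a single-restriction lifting property: if even one $3$-space on $\ell$ induces a point-pencil, then $\mathcal{L}$ is a point-pencil. One clean way to obtain it is to invoke Theorem~\ref{PPAG}, but that result is proved through the projective closure, which is precisely what this appendix wants to avoid. A self-contained route would be to prove the atomic case first — that a hyperplane of $\ag(m,q)$ inducing a point-pencil forces $\mathcal{L}$ itself to be a point-pencil — and then to propagate the vertex upward one dimension at a time, each step feeding the atomic lemma into Theorem~\ref{th:affine-induced}. Establishing that atomic lemma purely affinely appears to need more than the counting identity of Lemma~\ref{AidAff} (most plausibly a refined incidence count or a spectral argument), and this is where I would expect the real difficulty to lie.
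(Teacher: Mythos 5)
There is nothing in the paper to compare your argument against: Theorem~\ref{Classification} is stated with the citation \cite[Theorems 6.5 and 6.8]{Me2} and is simply imported, being the $k=1$ case of Theorem~\ref{Unique1AG} and Corollary~\ref{No2}. Your proposal therefore has to stand on its own as a proof, and as written it does not. The first and most serious gap is the base case: you ``take as given'' the classification of parameter~$1$ and the non-existence of parameter~$2$ in $\ag(3,q)$, but the statement you are proving asserts these facts for all $n\geq 3$, so the case $n=3$ is part of the claim, not an external input. It is also the genuinely hard part --- it is essentially the content of \cite{Me1} --- and Lemma~\ref{AidAff} gives you no purchase on it since it needs $t\leq n-1$. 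Granting that base case, your treatment of the first bullet for $n\geq 4$ is correct: the substitution check (all $x_{\pi_i}=1$ yields $x=1$), the positivity of the coefficient of $\sum x_{\pi_i}$, the conclusion that every line of $\mathcal{L}$ meets a fixed $\ell\in\mathcal{L}$, and the size count ruling out the coplanar branch all go through.

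For the second bullet you correctly isolate the remaining obstruction --- excluding $x_{\pi_i}=1$ --- but then leave it open because you have imposed on yourself the constraint of never passing to the projective closure. That constraint is not required by the statement, and the paper's own machinery closes the gap immediately: if some $3$-space $\pi_i$ through $\ell$ has $x_{\pi_i}=1$, the base case makes $\mathcal{L}\cap[\pi_i]_1$ a point-pencil, and Theorem~\ref{PPAG} then forces $\mathcal{L}$ to be a point-pencil, so $x=1\neq 2$. This is exactly the step the paper itself takes in the appendix proof of Theorem~\ref{ClasUpperBoundLines}, which invokes Theorem~\ref{Classification} together with Theorem~\ref{PPAG} at the corresponding point. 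So the mathematics of your reduction is sound; what is missing is (a) any proof of the $n=3$ case, and (b) either an affine proof of the ``one point-pencil section forces a global point-pencil'' lemma or the willingness to use Theorem~\ref{PPAG} as proved. Without at least one of these, neither bullet is established on the full range $n\geq 3$.
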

Here we state a stronger result for Cameron-Liebler line classes in $\ag(n,q)$.
\begin{St}\label{ClasUpperBoundLines}
Suppose that $\mathcal{L}$ is a Cameron-Liebler line class of parameter $x$ in $\ag(n,q)$, with $n\geq 4$, then 
$x\in\{0, 1\}$ or $x\geq 2\left( \frac{q^{n-1}-1}{q^{2}-1}\right) +1.$
\end{St}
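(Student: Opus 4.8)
The plan is to mirror the strategy of Theorem~\ref{NonExAG}, but to work entirely inside the affine space $\ag(n,q)$ using Lemma~\ref{AidAff} instead of passing to the projective closure and invoking Theorem~\ref{ToInftyAndBeond}. Since the statement is about line classes (the case $k=1$), the tool of Theorem~\ref{EquivalenceAff} is available, and it is exactly this tool that powers Lemma~\ref{AidAff}. First I would dispose of the parameters $x\in\{0,1\}$ trivially: $x=0$ is the empty set, and by Theorem~\ref{Classification} the only line class with $x=1$ is a point-pencil, so these two values are precisely the degenerate cases excluded in the statement. The substance of the theorem is therefore to show that if $\mathcal{L}$ is neither empty nor a point-pencil, then $x\geq 2\left(\frac{q^{n-1}-1}{q^2-1}\right)+1$.

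For the main estimate I would fix a line $\ell\in\mathcal{L}$, so that $\chi_\mathcal{L}(\ell)=1$, and apply Lemma~\ref{AidAff} with the smallest admissible value of the auxiliary dimension, namely $t=3$ (this requires $n\geq 4$, which is the hypothesis). Specializing the lemma gives an expression of the form
\begin{equation}
x=\frac{q-1}{q^{n-2}-1}\frac{\sum_{\ell\in\pi_i}x_{\pi_i}}{\gauss{n-3}{0}_q}-\frac{q^{n-1}-1}{q^2-1}+1,
\end{equation}
where the sum runs over all $3$-spaces $\pi_i$ through $\ell$ and $x_{\pi_i}$ is the parameter of the induced line class $\mathcal{L}\cap[\pi_i]_1$ in $\pi_i$. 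The key is then to produce a good lower bound on each $x_{\pi_i}$. By Theorem~\ref{th:affine-induced} each $\mathcal{L}\cap[\pi_i]_1$ is a Cameron-Liebler line class in the affine $3$-space $\pi_i$, and since $\ell\in\mathcal{L}\cap[\pi_i]_1$ it is non-empty, so $x_{\pi_i}\geq 1$. Moreover, because $\mathcal{L}$ is not a point-pencil, Theorem~\ref{PPAG} guarantees that no induced class $\mathcal{L}\cap[\pi_i]_1$ can be a point-pencil either; combined with the $x=1$ classification in Theorem~\ref{Classification}, this forces $x_{\pi_i}\neq 1$, hence $x_{\pi_i}\geq 2$. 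Finally, invoking the second bullet of Theorem~\ref{Classification} (that $x_{\pi_i}=2$ is impossible in an affine space) pushes the bound to $x_{\pi_i}\geq 3$.

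Substituting $x_{\pi_i}\geq 3$ into the specialized identity and simplifying the Gaussian coefficients should yield the claimed bound $x\geq 2\left(\frac{q^{n-1}-1}{q^2-1}\right)+1$, exactly paralleling the final computation in the proof of Theorem~\ref{NonExAG} with $k=1$ and $t=2k+1=3$. I expect the main obstacle to be purely bookkeeping: one must carefully evaluate the coefficient $\frac{q-1}{q^{n-2}-1}\cdot\frac{1}{\gauss{n-3}{0}_q}$ against the number of $3$-spaces through $\ell$ in $\ag(n,q)$, and check that the arithmetic of Lemma~\ref{AidAff} at $t=3$ collapses to the same clean ratio $\frac{\gauss{n-1}{2}_q}{\gauss{n-2}{1}_q}=\frac{q^{n-1}-1}{q^2-1}$ that appears in the affine theorem. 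The conceptual content — the chain $x_{\pi_i}\geq 1\Rightarrow x_{\pi_i}\geq 2\Rightarrow x_{\pi_i}\geq 3$ driven by the non-existence of parameters $1$ (point-pencil) and $2$ in affine $3$-space — is entirely supplied by the results already quoted, so no new geometric idea should be needed beyond translating the $\pg$ argument into its intrinsic affine counterpart.
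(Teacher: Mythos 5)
Your proposal is correct and follows essentially the same route as the paper's own proof: fix $\ell\in\mathcal{L}$, apply Lemma~\ref{AidAff}, and drive the chain $x_{\pi_i}\geq 1\Rightarrow x_{\pi_i}\neq 1\Rightarrow x_{\pi_i}\geq 3$ via Theorem~\ref{Classification} and Theorem~\ref{PPAG} before substituting back. The only cosmetic difference is that you specialize to $t=3$ from the outset, whereas the paper keeps $t$ general and sets $t=3$ in the final line; your coefficient bookkeeping at $t=3$ checks out.
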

\begin{proof}
Suppose that $\mathcal{L}$ is a Cameron-Liebler line class in $\ag(n,q)$, which is not empty nor a point-pencil. 
Hence, by Theorem \ref{Classification}, $x>2$. Choose a line $\ell \in \mathcal{L}$. Then, by Lemma \ref{AidAff},
for $3\leq t \leq n-1$,
\begin{equation}\label{Paramx} x=\frac{q^{t-2}-1}{q^{n-2}-1}\frac{\sum_{\ell \in \pi_i} x_{\pi_i}}{\gauss{n-3}{t-3}_q}-\frac{q^{n-1}-1}{q^{t-1}-1}+1.
\end{equation}
We now have the following facts:
\begin{enumerate}
\item Since $\ell \in \mathcal{L}$, every Cameron-Liebler line class in every $t$-dimensional subspace $\pi_i$ has parameter $x_{\pi_i}\geq 1$.
\item If there exists a $t$-space where $\mathcal{L}\cap [\pi_i]_1$ has parameter $x_{\pi_i}=1$, then, by Theorem \ref{Classification}, it is a point-pencil. Hence, by Theorem \ref{PPAG}, it follows that $\mathcal{L}$ is a point-pencil. So we may suppose that $x_{\pi_i}>1$.
\item Using Theorem \ref{Classification}, we know that $x_{\pi_i}>2$.
\end{enumerate}
So, we conclude that for every $t$-space $\pi_i$ through $l\in \cL$, it holds that $x_{\pi_i}\geq 3$. So, using Equation (\ref{Paramx}), we obtain that
\begin{equation*}
\begin{split}
x&=\frac{q^{t-2}-1}{q^{n-2}-1}\frac{\sum_{\ell \in \pi_i} x_{\pi_i}}{\gauss{n-3}{t-3}_q}-\frac{q^{n-1}-1}{q^{t-1}-1}+1 \\
& \geq \frac{q^{t-2}-1}{q^{n-2}-1}\frac{3\gauss{n-1}{t-1}_q}{\gauss{n-3}{t-3}_q}-\frac{q^{n-1}-1}{q^{t-1}-1}+1 \\
& \geq 3 \left( \frac{q^{n-1}-1}{q^{t-1}-1} \right)-\frac{q^{n-1}-1}{q^{t-1}-1}+1\\
& \geq 2\left( \frac{q^{n-1}-1}{q^{t-1}-1}\right) +1\\ 
&\geq 2\left( \frac{q^{n-1}-1}{q^{2}-1}\right) +1,
\end{split}
\end{equation*}
where in the last line, we have chosen $t=3$.
\end{proof}


\begin{thebibliography}{10}

\bibitem{BruenAndDrude}
A.~A.~Bruen and K.~Drudge.
\newblock The construction of {C}ameron–{L}iebler line classes in
  {P}{G}($3,q$).
\newblock {\em Finite Fields Appl.}, 5(1):35--45, 1999.

\bibitem{Jozefien}
A.~Blokhuis, M.~De~Boeck, and J.~D'haeseleer.
\newblock Cameron-{L}iebler sets of {$k$}-spaces in {${\rm PG}(n,q)$}.
\newblock {\em Des. Codes Cryptogr.}, 87(8):1839--1856, 2019.

\bibitem{Cameron-Liebler}
P.~J. Cameron and R.~A. Liebler.
\newblock Tactical decompositions and orbits of projective groups.
\newblock {\em Linear Algebra Appl.}, 46:91--102, 1982.

\bibitem{DeBeule2016}
J.~De~Beule, J.~Demeyer, K.~Metsch, and M.~Rodgers.
\newblock A new family of tight sets in {$\mathcal{Q}^+(5,q)$}.
\newblock {\em Des. Codes Cryptogr.}, 78(3):655--678, 2016.

\bibitem{Me1}
J.~D'haeseleer, J.~Mannaert, L.~Storme, and A.~\v{S}vob.
\newblock {C}ameron-{L}iebler line classes in {AG}($3,q$).
\newblock {\em Finite Fields Appl.}, 67:101706, 2020.

\bibitem{Me2}
J.~D'haeseleer, J.~Mannaert, F. Ihringer, and L.~Storme.
\newblock {C}ameron-{L}iebler $k$-sets in {AG}($n,q$).
\newblock {\em Elec. J. Combin., submitted}.

\bibitem{Drudge}
K.~Drudge.
\newblock On a conjecture of {C}ameron and {L}iebler.
\newblock {\em European J. Combin.}, 20(4):263--269, 1999.

\bibitem{DrudgeThesis}
K.~Drudge.
\newblock \emph{Extremal sets in projective and polar spaces.}
\newblock PhD thesis, The University of West Ontario, London, Canada, 1998.


\bibitem{Feng2015}
T.~Feng, K.~Momihara, and Q.~Xiang.
\newblock Cameron-{L}iebler line classes with parameter {$x=\frac{q^2-1}{2}$}.
\newblock {\em J. Combin. Theory Ser. A}, 133:307--338, 2015.

\bibitem{Feng20xx}
Tao Feng, Koji Momihara, Morgan Rodgers, Qing Xiang, and Hanlin Zou.
\newblock Cameron-{L}iebler line classes with parameter {$x=\frac{(q+1)^2}{3}$}.
\newblock {\em  Adv. Math.}, 385 (2021), 107780. 

\bibitem{BDF}
Y.~Filmus and F.~Ihringer.
\newblock Boolean degree 1 functions on some classical association schemes.
\newblock {\em J. Combin. Theory Ser. A}, 162:241--270, 2019.


\bibitem{MetschAndGavrilyuk}
A.~L. Gavrilyuk and K.~Metsch.
\newblock A modular equality for {C}ameron-{L}iebler line classes.
\newblock {\em J. Combin. Theory Ser. A}, 127:224--242, 2014.

\bibitem{GavMog}
A. Gavrilyuk and I. Mogilnykh.
\newblock {C}ameron-{L}iebler line classes in PG($n,4$).
\newblock {\em Des. Codes Cryptogr.}, 73:969--982, 2014.

\bibitem{Hirschfeld}
J.~W.~P. Hirschfeld.
\newblock {\em Projective geometries over finite fields}.
\newblock Oxford Mathematical Monographs. The Clarendon Press, Oxford
  University Press, New York, second edition, 1998.

\bibitem{Ihringer2021}
F.~Ihringer.
\newblock  Remarks on the Erdős Matching Conjecture for vector spaces.
\newblock {\em Eur. J. Comb.}, 94 (2021), 103306.


\bibitem{Mat18}
 I.~Matkin.
\newblock Cameron-Liebler line classes in $\pg(n,5)$.
\newblock {\em Trudy Inst. Mat. i Mekh. UrO RAN}, 24(2):158--172, 2018.


\bibitem{RodgersPhD}
Morgan~J. Rodgers.
\newblock {\em On some new examples of {C}ameron-{L}iebler line classes}.
\newblock ProQuest LLC, Ann Arbor, MI, 2012.
\newblock Thesis (Ph.D.)--University of Colorado at Denver.

\bibitem{Rodgers2013}
Morgan Rodgers.
\newblock Cameron-{L}iebler line classes.
\newblock {\em Des. Codes Cryptogr.}, 68(1-3):33--37, 2013.

\bibitem{Segre}
B.~Segre.
\newblock \emph{Lectures on Modern Geometry (with an appendix by L. Lombardo-Radice).}
\newblock Consiglio Nazionale delle Ricerche, Monografie Mathematiche. Edizioni Cremonese, Roma, 479 pp., 1961.




\end{thebibliography}
\bibliographystyle{plain}

\end{document}